\numberwithin{equation}{section}
\newtheorem{theorem}{Theorem}[section]
\newtheorem{prop}[theorem]{Proposition}
\newtheorem{lem}[theorem]{Lemma}
\theoremstyle{remark}
\newtheorem{remark}[theorem]{Remark}
\def\M{\Sigma}
\def\p{\mathsf{p}}
\def\T{{\sf T}}
\def\R{\mathbb{R}}
\def\C{\mathbb{C}}
\def\N{\mathbb{N}}
\def\Nz{\mathbb{N}_0}
\def\A{\mathfrak{A}}
\def\Ae{\mathfrak{A}_{\frak{e}}}
\def\K{\frak{K}}
\def\Ke{\frak{K}_{\frak{e}}}
\def\L{\mathcal{L}}
\def\Lis{\mathcal{L}{\rm{is}}}
\def\F{\mathfrak{F}}
\def\bF{\boldsymbol{\mathfrak{F}}}
\def\bB{\boldsymbol{\frak{B}}}
\def\B{\mathbb{B}}
\def\Br{\mathbb{B}(0,r_0)}
\def\O{\mathsf{O}}
\def\Oe{\mathsf{O}_\frak{e}}
\def\Oei{\mathsf{O}_{\frak{e}_i}}
\def\Ok{\mathsf{O}_{\kappa}}
\def\Oek{\mathsf{O}_{\frak{e},\kappa}}
\def\Ot{\mathsf{O}_{{\kappa_c}}}
\def\Oet{\mathsf{O}_{\frak{e},{\kappa_c}}}
\def\vpk{\varphi_{\kappa}}
\def\psk{\psi_{\kappa}}
\def\vpt{\varphi_{{\kappa_c}}}
\def\pst{\psi_{{\kappa_c}}}
\def\vpet{\varphi_{\frak{e},{\kappa_c}}}
\def\pset{\psi_{\frak{e},{\kappa_c}}}
\def\vpek{\varphi_{\frak{e},\kappa}}
\def\psek{\psi_{\frak{e},\kappa}}
\def\Q{\mathsf{Q}^{m}}
\def\Qa{\mathsf{Q}_a}
\def\kbt{\varphi^{\ast}_{{\kappa_c}}}
\def\kft{\psi^{\ast}_{{\kappa_c}}}
\def\kbet{\varphi^{\ast}_{\frak{e},{\kappa_c}}}
\def\kfet{\psi^{\ast}_{\frak{e},{\kappa_c}}}
\def\kf{\psi^{\ast}_{\kappa}}
\def\keb{\varphi^{\ast}_{\frak{e},\kappa}}
\def\kef{\psi^{\ast}_{\frak{e},\kappa}}
\def\pk{\pi_\kappa}
\def\pek{\pi_{\frak{e},\kappa}}
\def\tm{\theta^{\ast}_{\mu}}
\def\tmi{\theta_{\ast}^{\mu}}
\def\te{\bar{\theta}^{\ast}_{\eta}}
\def\Tm{{\Theta}^{\ast}_{\mu}}
\def\Tmi{{\Theta}^{\mu}_{\ast}}
\def\tlm{\theta^{\ast}_{\lambda,\mu}}
\def\Tlm{{\Theta}^{\ast}_{\lambda,\mu}}
\def\Tlmi{{\Theta}_{\ast}^{\lambda,\mu}}
\def\Tu{{T}_{\mu}}
\def\Tui{{T}^{-1}_{\mu}}
\def\tTu{\tilde{T}_{\mu}}
\def\rl{{\varrho}^{\ast}_{\lambda}}
\def\tme{\theta^{\ast}_{\mu,\eta}}
\def\tmei{\theta_{\ast}^{\mu,\eta}}
\def\Tme{{\Theta}^{\ast}_{\mu,\eta}}
\def\Tmei{{\Theta}^{\mu,\eta}_{\ast}}
\def\tlme{\theta^{\ast}_{\lambda,\mu,\eta}}
\def\Tlme{{\Theta}^{\ast}_{\lambda,\mu,\eta}}
\def\Tlmei{{\Theta}_{\ast}^{\lambda,\mu,\eta}}
\def\Tue{{T}_{\mu,\eta}}
\def\Tuei{{T}^{-1}_{\mu,\eta}}
\def\tTue{\tilde{T}_{\mu,\eta}}
\def\tAme{\tilde{\mathcal{A}}_{\mu,\eta}}
\def\Ame{{\mathcal{A}}_{\mu,\eta}}
\def\ej{\mathbb{E}(J)}
\def\zej{\prescript{}{0}{\mathbb{E}(J)}}
\def\ea{\mathbb{E}_{1}(J)}
\def\zea{\prescript{}{0}{\mathbb{E}_{1}(J)}}
\def\eb{\mathbb{E}_{2}(J)}
\def\zeb{\prescript{}{0}{\mathbb{E}_{2}(J)}}
\def\fj{\mathbb{F}(J)}
\def\zfj{\prescript{}{0}{\mathbb{F}(J)}}
\def\fa{\mathbb{F}_{1}(J)}
\def\fb{\mathbb{F}_{2}(J)}
\def\fc{\mathbb{F}_{3}(J)}
\def\zfb{\prescript{}{0}{\mathbb{F}_{2}(J)}}
\def\zfc{\prescript{}{0}{\mathbb{F}_{3}(J)}}
\def\fd{\mathbb{F}_{4}}
\def\gb{\mathbb{G}_{2}(J)}
\def\gc{\mathbb{G}_{3}(J)}
\def\ca{\mathbb{C}_{1}(J)}
\def\cb{\mathbb{C}_{2}(J)}
\def\Re{\mathcal{R}}
\def\Rek{\mathcal{R}_{\kappa}}
\def\Rc{\mathcal{R}^c}
\def\Rck{\mathcal{R}^c_{\kappa}}
\def\Ree{\mathcal{R}_{\frak{e}}}
\def\Reek{\mathcal{R}_{\frak{e},\kappa}}
\def\Rce{\mathcal{R}^c_{\frak{e}}}
\def\Rcek{\mathcal{R}^c_{\frak{e},\kappa}}
\def\id{{\rm{id}}}
\def\supp{{\rm{supp}}}
\def\cp{{\rm{cp}}}
\def\Dfi{{\rm{Diff}^{\hspace{.1em}\infty}}}
\def\im{{\rm{im}}}
\def\tr{{\rm{tr}}}
\def\Xk{\mathbb{X}_{\frak{e},\kappa}}
\def\cH{\mathcal{H}}
\def\bz{\hat{\beta}_0}
\def\vge{\varsigma_{\frak{e}}}
\begin{document}

\title[regularity of the interface of the Stefan problem]
{On the regularity of the interface of a thermodynamically consistent two-phase Stefan problem with surface tension}

\author[J.~Pr\"uss]{Jan Pr\"uss}
\address{Institut f\"ur Mathematik,
         Martin-Luther-Universit\"at Halle-Wittenberg,
         60120 Halle, Germany}
\email{jan.pruess@mathematik.uni-halle.de}

\author[Y. Shao]{Yuanzhen Shao}
\address{Department of Mathematics,
         Vanderbilt University,  
         Nashville, TN 37240, USA}
\email{yuanzhen.shao@vanderbilt.edu}

\author[G. Simonett]{Gieri Simonett}
\address{Department of Mathematics\\
         Vanderbilt University \\
         Nashville, TN~37240, USA}
\email{gieri.simonett@vanderbilt.edu}

\thanks{The research of the third author was partially supported by NSF DMS-1265579.}
\subjclass[2010]{35R35,  82C26,   35K55,  35B65}
\keywords{free boundary problems, phase transitions, the Stefan problem, regularity of moving interfaces, real analytic solutions, maximal regularity, the implicit function theorem}

\begin{abstract}
We study the regularity of the free boundary arising in a thermodynamically consistent two-phase Stefan problem with surface tension by means of a family of parameter-dependent diffeomorphisms, $L_p$-maximal regularity theory, and the implicit function theorem. 
\end{abstract}
\maketitle

\section{\bf Introduction}

The main objective of this article is to develop a technique relying on a family of parameter-dependent diffeomorphisms, maximal regularity theory, and the implicit function theorem to prove regularity of moving interfaces occurring in free boundary problems. As an application, we prove that the moving interface in a thermodynamically consistent two-phase Stefan problem is jointly $C^k$-smooth in time and space, for $k\in \N\cup \{\infty,\omega\}$ with $\omega$ being the symbol of real analyticity, as long as several physical quantities, that is, the coefficients of the heat conductivity, kinetic undercooling, and the free energy, enjoy appropriate regularity assumptions.


The idea of establishing regularity of solutions to differential equations by means of the implicit function theorem in conjunction with a translation argument was first introduced by S.B.~Angenent in \cite{Ange88} to prove   analyticity of the free boundary in one dimensional porous medium equations, and has proved itself a useful tool later in many publications. See for example \cite{Ange902,EscPruSim0302, EscSim96, Lunar95, PruSim07}. More precisely, to study the regularity of the solution to a differential equation, one introduces parameters representing translation in space and time into the solution to the given differential equation. Then one studies the parameter-dependent equation satisfied by this transformed solution. The implicit function theorem yields the smooth dependence upon the parameters of the solution to the parameter-dependent problem. This regularity property is then inherited by the original solution. An advantage of this technique is reflected by its power to prove analyticity of solutions to differential equations, which cannot be attained through the classical method of bootstrapping.

A well-known approach to free boundary problems is to transform the original problem with a moving boundary, or separating interface, which we denote by $\Gamma(t)$, into one with a fixed reference manifold $\Sigma$ by means of the Hanzawa transformation, see \cite{Han81}. Then the problem of establishing the regularity of the free boundary $\Gamma(t)$ is transferred into establishing the regularity of the height function parameterizing $\Gamma(t)$ over $\Sigma$.
However, applying the aforementioned translation technique to the height function on the surface $\Sigma$ causes an essential challenge, considering for instance the usual translation $(t,x)\mapsto(t+\lambda,x+\mu)$, because of the global nature of these translations. 
Hence we desire an alternative that only shifts the variables ``locally".
The idea of localizing translations was first introduced by J.~Escher, J.~Pr\"uss and G.~Simonett in \cite{EscPruSim03} to study regularity of solutions to elliptic and parabolic equations in Euclidean space. The basic building block of \cite{EscPruSim03} consists of rescaling translations by some cutoff function. This technique was later generalized in Y.~Shao \cite{ShaoPre} to introduce a family of parameter-dependent diffeomorphisms acting on functions or tensor fields on Riemannian manifolds by means of a smooth atlas in order to study the regularity of solutions to geometric evolution equations.
But in view of the physical quantities in the bulk phases adjacent to $\Gamma(t)$, e.g., the temperature function in the case of phase transitions, or the velocity and the pressure field in the case of two-phase fluids, we need introduce a localized translation not only on the fixed reference surface $\Sigma$, but also in a neighborhood of $\Sigma$. This adds one more degree of complexity to the aforementioned technique for geometric flows. In Section~3, we will build up a complete theory of parameter-dependent diffeomorphisms for free boundary problems.


Free boundary problems form an important field of applied analysis. They deal with solving partial differential equations in a given domain, a part of whose boundary is a priori unknown. That portion of the boundary is called the free boundary or the moving boundary. In addition to the standard boundary conditions that are
needed in order to solve the prescribed partial differential equations, an additional condition must be imposed at the free boundary. One then seeks to determine both the free boundary and the solution of the differential equation. This field has drawn great attention over decades due to its applications to physics, chemistry, medicine, material science, and so forth.

Great progress has been seen in the studies of regularity of free boundaries during the past half century. Many mathematicians have made  contributions  to the theory of free boundary problems, among them H.W.~Alt, H.~Berestycki, L.A.~Caffarelli, A.~Friedman, D.~Kinderlehrer, L.~Nirenberg, J.~Spruck, G.~Stampacchia  \cite{AltCaf81,AltCafFdm84, BerCaf90, Caf77, Caf87, KinNir77, KinNir78, KinNirSpr79, KinSta80}, E.~DiBenedetto \cite{Dib82}. We also refer the reader to \cite{FmdRei88,KinSta80} for a historical account of the field up to the 1980s. 


The Stefan problem, arguably, is the most studied free boundary problem,
with over 1200 mathematical publications devoted to the topic. It was first introduced in 1889 by 
J.~Stefan. We refer the reader to the books \cite{Mei92, Rub71, Vis96} for further information.
The Stefan problem  describes phase transitions
in liquid-solid systems and accounts for heat diffusion and
exchange of latent heat in a homogeneous medium, wherein the liquid and solid phases are separated by a closed moving interface $\Gamma(t)$. 
The basic physical law governing this process is conservation of energy. 
To be more precise, let $\Omega\subset \R^{m+1}$ be a bounded domain of class $C^{2}$, $m\geq 1$.
$\Omega$ is occupied by a material that can undergo phase changes: at time $t$, phase $i$ occupies
the subdomain $\Omega_i(t)$ of
$\Omega$, respectively, with $i=1,2.$
We assume that $\partial \Omega_1(t)\cap\partial \Omega=\emptyset$; this means
that no {\em boundary contact} can occur.
The closed compact hypersurface $\Gamma(t):=\partial \Omega_1(t)\subset \Omega$
forms the interface between the phases. 
Then the {\em Stefan problem with surface tension}, or the \emph{Stefan problem with Gibbs-Thomson correction},  can be formulated as follows:
\begin{equation}
\label{S1: Stefan-0}
\left\{\begin{aligned}
\partial_t \vartheta-d\Delta \vartheta&=0 &&\text{in}&&\Omega\setminus\Gamma(t),\\
\partial_{\nu_{\partial\Omega}}\vartheta&=0 &&\text{on}&&\partial \Omega, \\
V(t)-[\![d\partial_{\nu_\Gamma}\vartheta]\!]&=0 &&\text{on}&&\Gamma(t),\\
\vartheta+\sigma \cH &=0   &&\text{on}&&\Gamma(t), \\
 \vartheta(0)=\vartheta_0, \quad  \Gamma(0) &=\Gamma_0. &&
\end{aligned}\right.
\end{equation}
Here
 $\vartheta(t)=\vartheta_1(t)\chi_{\Omega_1(t)} + \vartheta_2(t)\chi_{\Omega_2(t)}$, where $\vartheta_i$ denotes the  relative temperature distribution in phase $i$,
 $\nu_\Gamma(t)$ the outer normal field of $\partial\Omega_1(t)$,
 $V(t)$ the normal velocity of $\Gamma(t)$,
 $\cH(t)=\cH(\Gamma(t))=-{\rm div}_{\Gamma(t)} \nu_\Gamma(t)/m$ the mean curvature of $\Gamma(t)$, and
 $[\![v]\!]=v_2|_{\Gamma(t)}-v_1|_{\Gamma(t)}$ the jump of a quantity $v$ across $\Gamma(t)$.
The sign of the mean curvature $\cH$ is chosen to be negative at a point $x\in\Gamma$ if
$\Omega_1\cap \B(x,r)$ is convex for some sufficiently small $r>0$. Thus if $\Omega_1$ is a ball
of radius $R$ then $\cH=-1/R$ for its boundary $\Gamma$. The function $d$ agrees with a positive constant $d_i$ in $\Omega_i(t)$, which stand for the heat conductivities in different phases. 
The condition that $V(t)=[\![d\partial_{\nu}\vartheta]\!]$ is caused by the law of conservation of energy and is usually called the {\em Stefan condition}. 

If a pair $(\vartheta,\Gamma)$ solves the system of equations (1.1), 
$\Gamma$ is called the free boundary to the Stefan problem.
In the case that  the condition $\vartheta+\sigma \cH =0$ is replaced by
\begin{equation}
\label{S1: GT}
\vartheta  =0 \quad\text{on}\quad\Gamma(t),
\end{equation}
i.e., if $\sigma=0$, the resulting problem is usually referred to as the \emph{classical Stefan problem}.  
Here we mention the monographs  by B.~Chalmers~\cite[Chapter~1]{Cha77}, 
P.~Hartman~\cite{Har73},
A.~Visintin~\cite{Vis96},
and the research papers by
G.~Caginalp~\cite{Cag86},
M.E.~Gurtin \cite{Gur86}-\cite{Gur88-2},
J.S.~Langer~\cite{Lan80},
W.W.~Mullins and R.F.~Sekerka~\cite{MuSe63, MuSe64}, 
A. Visintin~\cite{Vis93},
where this law has been motivated and derived based on various mathematical and physical principles.
As has been explained in \cite[Section 1]{PruSimZac12}, see also \cite{PruSimPre}, condition \eqref{S1: GT}
can be understood as a first order approximation of the relation
\begin{equation}
\label{S1: GT-consistent}
[\![\psi(\theta)]\!]+\sigma \cH=0\quad\text{on}\quad \Gamma(t)
\end{equation}
around the melting temperature, where 
$\theta$ is the absolute temperature and $\psi$ is the free energy of the system.

In the absence of one of $\vartheta_i$, that is, one of $\vartheta_i$ is identically equal to some constant, the problem is called the one-phase Stefan problem, otherwise it is alluded to as the two-phase Stefan problem.

In the one-dimensional case, different aspects of the \emph{classical Stefan problem} were extensively studied before the 1980s.
In higher dimensions, global existence and uniqueness of weak solutions was first established by S.L.~Kamenomostskaja in \cite{Kam61}, see also A.~Friedman \cite{Fmd68}. Existence of local classical solutions was obtained by E.I.~Hanzawa \cite{Han81} and A.M.~Me\v{i}rmanov \cite{Mei80}, provided that the initial data are smooth enough and  satisfy some higher order compatibility conditions.

In the case of the one-phase {\em classical Stefan problem}, regularity of the free boundary for  weak solutions is studied in \cite{Caf77,Caf78,FmdKin74,KinNir77,KinNir78}. The formulation of the problem as a parabolic variational inequality was initiated by G.~Duvaut in \cite{Duv73}. After that, it has been applied in \cite{FmdKin74} to get the Lipschitz continuity of the free boundary under some conditions on the initial geometry and the given data. Later in a paper of L.A.~Caffarelli \cite{Caf77}, the author proved that the free boundary is $C^1$ in space and time and the temperature is $C^2$ up to the free boundary near a density point for the states set, namely the solid phase. In particular, if the free boundary is Lipschitz continuous then it is actually $C^1$. In \cite{Caf78}, the author identified conditions to guarantee a locally Lipschitz free boundary. Almost at the same time, D.~Kinderlehrer and L.~Nirenberg \cite{KinNir77} established via the partial hodograph and Legendre transformations in conjunction with a variational inequality argument smoothness of the free boundary as well as the solution. The conditions in \cite{KinNir77} can be verified by the results in \cite{Caf77}. In a subsequent paper \cite{KinNir78}, the same authors showed under some integral conditions by the same technique that the free boundary and the solution are analytic in the space variables and in the second Gevrey class for the time variable. Regularization of the free boundary for large time was obtained by H.~Matano in \cite{Mat82}, i.e., any weak solution eventually becomes smooth. Continuity of the temperature distribution for weak solutions is proved in \cite{CafFmd79}.

Regularity of the free boundary for weak solutions or   viscosity solutions has been explored in \cite{AthCaf96,AthCafSal98,Koc98} for the multi-dimensional two-phase Stefan problem based on a non-degeneracy condition. The non-degeneracy condition states, roughly speaking, that the heat fluxes are not vanishing simultaneously on the free boundary, which is automatically satisfied when the free boundary is regular enough, say belongs to the H\"older class of order more than $1$. Assuming that the free boundary is locally Lipschitz continuous, it was proven in \cite{AthCaf96} that the solution is classical, i.e., the free boundary and the viscosity solution are $C^1$. Later the author showed that the non-degeneracy condition can be replaced by some flatness condition. In the presence of the non-degeneracy condition, H.~Koch verified that the $C^1$ free boundary and the temperature are in fact smooth. Through a different approach by imposing a mild regularity condition on the initial data, it was shown by J.~Pr\"uss, J.~Saal and G.~Simonett in \cite{PruSim07} that the free boundary and the temperature are analytic under the assumption that the free boundary can be expressed as the graph of some function. Continuity of the temperature distribution for weak solutions was obtained in \cite{CafEva83,Dib80,Dib82,Zie82}.

Although the Stefan problem with the
{\em Gibbs-Thomson correction} \eqref{S1: GT} has been around
for many decades, only few analytical results 
concerning existence of solutions can be found in the literature. A.~Friedman and F.~Reitich \cite{FmdRei91} considered the case
with small surface tension $0<\sigma\ll 1$ 
and linearized the problem about $\sigma=0$. 
Assuming the existence of a smooth solution for the case $\sigma=0$, that is, 
for the {\em classical Stefan problem}, the authors proved existence
and uniqueness of a weak solution for the 
\textit{linearized} problem and
then investigated the effect of small surface tension on
the shape of $\Gamma(t)$.
Existence of long time weak solutions was first established by Luckhaus~\cite{Luc90},
using a discretized problem and a capacity-type estimate for approximating solutions. 
The weak solutions obtained have a sharp interface, but are highly non-unique. 
See also R\"oger~\cite{Roe04}, and Almgren and Wang~\cite{AlWa00}.  
Existence of classical solutions, but without uniqueness, was proved by E.V.~Radkevich in \cite{Rad91}. 
In A.M. Me\v{i}rmanov~\cite{Mei94}, 
the way in which a spherical ball of ice 
in a supercooled fluid melts down was investigated. 
It was presented by J.~Escher, J.~Pr\"uss and G.~Simonett in \cite{EscPruSim0302} that there exists a unique local strong solution and the free boundary immediately regularizes to be analytic in space and time provided that the initial data satisfy some mild regularity assumptions. This result is based on the assumption that the free boundary is given by the graph of some function. 
In J.~Pr\"uss and G.~Simonett~\cite{PruSim08}, linearized stability and instability of equilibria was investigated.
Nonlinear stability results were obtained by 
M.~Had{\v{z}}i{\'c}, Y.~Guo~\cite{Had12,HadGuo10} and J.~Pr\"uss, G.~Simonett and R.~Zacher~\cite{PruSimZac12}.

In this paper we consider a general model for phase transitions, formulated in \cite{PruSimZac12}, that is thermodynamically consistent,
see \cite{Gur07} and \cite{Ish06} for related work. 
It involves the thermodynamic quantities of absolute temperature, free energy, internal energy, and entropy,
and is complemented by constitutive equations for the free energies and the heat fluxes in the bulk regions.
An important assumption is that there be no entropy production on the interface.
In particular, the interface is assumed to carry no mass and no energy
except for surface tension. 

To be more precise, 
we choose $\Omega$, $\Omega_i(t)$ and $\Gamma(t)$ as in \eqref{S1: Stefan-0}.
By the thermodynamically consistent two-phase {\em Stefan problem with surface tension}, possibly with kinetic undercooling, we mean the following problem:
find a family of closed compact hypersurfaces $\{\Gamma(t)\}_{t\geq0}$ contained in $\Omega$
and an appropriately smooth function $\theta:\R_+\times\bar{\Omega}\rightarrow\R$ such that
\begin{equation}
\label{stefan}
\left\{\begin{aligned}
\kappa (\theta)\partial_t \theta-{\rm div}(d(\theta)\nabla \theta)&=0 &&\text{in}&&\Omega\setminus\Gamma(t),\\
\partial_{\nu_\Omega} \theta &=0 &&\text{on}&&\partial \Omega, \\
[\![\theta]\!]&=0 &&\text{on}&&\Gamma(t),\\
[\![\psi(\theta)]\!]+\sigma \cH &=\gamma(\theta) V &&\text{on}&&\Gamma(t), \\
[\![d(\theta)\partial_{\nu_\Gamma} \theta]\!] &=(l(\theta)-\gamma(\theta)V) V &&\text{on}&&\Gamma(t),\\
\theta(0)=\theta_0, \quad  \Gamma(0)&=\Gamma_0. &&
\end{aligned}\right.
\end{equation}
Here  $\theta$ denotes the (absolute) temperature. 
Several quantities are derived from the free energies $\psi_i(\theta)$ as follows:
\begin{itemize}
\item
 $\epsilon_i(\theta)= \psi_i(\theta)+\theta\eta_i(\theta)$, the internal energy in phase $i$,
\item
 $\eta_i(\theta) =-\psi_i^\prime(\theta)$, the entropy,
\item
 $\kappa_i(\theta)= \epsilon^\prime_i(\theta)=-\theta\psi_i^{\prime\prime}(\theta)>0$, the  heat capacity,
\item
$l(\theta)=\theta[\![\psi^\prime(\theta)]\!]=-\theta[\![\eta(\theta)]\!]$, the latent heat.
\end{itemize}
Furthermore, $d_i(\theta)>0$ denotes the coefficient of heat conduction in Fourier's law,
 $\gamma(\theta)\geq0$ the coefficient of  kinetic undercooling.
As is commonly done, we assume that there exists a unique (constant) {\em melting temperature} $\theta_m$,
characterized by the equation $[\![\psi(\theta_m)]\!]=0.$
Finally, system \eqref{stefan} is to be completed by constitutive equations for
the free energies $\psi_i$ in the bulk phases $\Omega_i(t)$. 

In the sequel we drop the index $i$, as there is no danger of confusion; we just keep in mind that the coefficients depend on the phases.
The temperature is assumed to be continuous across the interface, 
as indicated by the condition $[\![\theta]\!]=0$ in \eqref{stefan}.
However, the free energy and the heat conductivities depend on the respective phases,
and hence the jumps $[\![\psi(\theta)]\!]$,
$[\![\kappa(\theta)]\!]$, $[\![\eta(\theta)]\!]$, $[\![d(\theta)]\!]$ are in general non-zero at the interface.
In this paper we assume that the coefficient of surface tension is constant. 

In this paper, we will prove the following regularity result for $k\in \N\cup \{\infty,\omega\}$.
\begin{theorem}
\label{S1: main theorem: gamma=0}
{\rm ($\gamma \equiv 0$).}
Let $p>m+3$, $\gamma=0$, $\sigma>0$. Suppose that
$d_i\in C^{k+2}(0,\infty)$,  $\psi_i\in C^{k+3}(0,\infty)$ for $i=1,2$ such that
$$\kappa_i(u)=-u\psi_i^{\prime\prime}(u)>0,\quad d_i(u)>0,\quad u\in(0,\infty).$$
Assume the {\em regularity conditions}
$$\theta_0\in W^{2-2/p}_p(\Omega\setminus\Gamma_0)\cap C(\bar{\Omega}),\quad \theta_0>0,
\quad \Gamma_0\in W^{4-3/p}_p,$$
the {\em  compatibility conditions}
$$
\partial_{\nu_\Omega}\theta_0=0,\quad
[\![\psi(\theta_0)]\!]+\sigma {\cH}(\Gamma_0)=0,
\quad [\![d(\theta_0)\partial_{\nu_{\Gamma_0}} \theta_0]\!]\in W^{2-6/p}_p(\Gamma_0),$$
and the {\em well-posedness condition}
$$\quad l(\theta_0)\neq0\quad\mbox{on}\; \Gamma_0.$$
Then there exists a unique $L_p$-solution $(u,\Gamma)$ for the Stefan problem with surface tension \eqref{stefan}
on some possibly small but nontrivial time interval $J=[0,T]$, and
$$
 \mathcal{M}:=\bigcup_{t\in(0,T)}\{\{t\}\times\Gamma(t)\}
$$
is a $C^k$-manifold in $\R^{m+2}$. In particular, each manifold $\Gamma(t)$ is $C^k$ for $t\in (0,T)$.
\end{theorem}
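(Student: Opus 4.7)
The plan is to combine the Hanzawa reduction with the family of parameter-dependent diffeomorphisms developed in Section 3 and apply the implicit function theorem in an $L_p$-maximal regularity setting, following the philosophy of \cite{EscPruSim0302,EscPruSim03,PruSim07}. First I would use a Hanzawa-type transformation to convert \eqref{stefan} into a fully nonlinear problem on the fixed domain $\Omega\setminus\Sigma$ for the pair $(u,h)$, where $u$ is the pulled-back temperature and $h$ is the height function representing $\Gamma(t)$ over the reference surface $\Sigma$. The hypotheses on $\theta_0$ and $\Gamma_0$, the compatibility conditions, and the well-posedness condition $l(\theta_0)\neq 0$ (which yields normal parabolicity of the linearization) feed into the $L_p$-maximal regularity theory to produce a unique local $L_p$-solution $(u,h)$ on some $J=[0,T]$ in the appropriate anisotropic Sobolev spaces.

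Next, for an arbitrary point $(t_0,x_0)\in\mathcal{M}$ with $t_0\in(0,T)$ I would invoke the parameter-dependent diffeomorphism $\Tlm$ constructed in Section 3 on a tubular neighborhood of $\Sigma$ around the corresponding point $\x\in\Sigma$. Here $\lambda$ encodes a translation in time and $\mu$ a cutoff-localized translation on $\Sigma$ extended normally into the bulk; the cutoff is essential, because a naive global translation does not exist on a compact manifold. Setting $\ut:=u\circ\Tlm$ and $\rl(t):=h(t+\lambda,\cdot)$, a direct but careful computation shows that the pair $(\ut,\rl)$ satisfies a parameter-dependent system
$$
\mathcal{N}(\lambda,\mu;\ut,\rl)=0,
$$
formulated in the same maximal regularity spaces as the original Hanzawa-transformed problem, with $\mathcal{N}(0,0;u,h)=0$ recovering the reference solution. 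The coefficients of $\mathcal{N}$ depend on $(\lambda,\mu)$ only through $\Tlm$ and its Jacobian, which by construction are smooth in $(\lambda,\mu)$ and coincide with the identity outside the cutoff patch, so that the boundary and jump conditions on $\partial\Omega$ are undisturbed.

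The heart of the proof is the application of the implicit function theorem to $\mathcal{N}$ at $(0,0;u,h)$. Under the stated regularity of $d_i$ and $\psi_i$, together with $C^k$-dependence of $\Tlm$ on $(\lambda,\mu)$ and the continuous embedding of the solution space into H\"older-type spaces guaranteed by $p>m+3$, the superposition and multiplier estimates show that $\mathcal{N}$ is of class $C^k$; in the analytic case $k=\omega$, one uses that compositions of real-analytic functions with an analytic family of diffeomorphisms remain analytic on the relevant Sobolev scales. The Fr\'echet derivative $D_{(\ut,\rl)}\mathcal{N}(0,0;u,h)$ is, up to lower order perturbations, the linearization of the Hanzawa-transformed Stefan problem at $(u,h)$, which is an isomorphism by the same $L_p$-maximal regularity theorem used for existence. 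The implicit function theorem therefore yields a $C^k$ map $(\lambda,\mu)\mapsto(\ut,\rl)$ on a neighborhood of $(0,0)$.

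Finally, uniqueness of the $L_p$-solution identifies $(\ut,\rl)$ with the $(\lambda,\mu)$-translate of the original solution on the cutoff patch, so the $C^k$-dependence on the parameters transfers to joint $C^k$-regularity of $h$ in $(t,x)$, and hence of $\Gamma(t)$ and $\mathcal{M}$, near $(t_0,x_0)$; covering $\mathcal{M}$ by such patches completes the proof. The main obstacle I expect is verifying the required $C^k$- (and in particular real-analytic) dependence of $\mathcal{N}$ as a map between the anisotropic $L_p$-spaces: the principal part of the transformed system depends nonlinearly on $\nabla h$, on traces of $\nabla u$ on $\Sigma$, and on derivatives of $\Tlm$, and the analytic case $k=\omega$ forces one to work with majorant-type estimates rather than the classical chain rule. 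The lower bound $p>m+3$ is exactly calibrated to make these composition and trace estimates work and to ensure positivity of the pulled-back temperature throughout the argument.
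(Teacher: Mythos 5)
Your overall strategy --- Hanzawa reduction, localized parameter-dependent translations in time and along $\Sigma$, and the implicit function theorem in an $L_p$-maximal-regularity setting --- is exactly the paper's. But two steps that you treat as routine are where the actual work lies, and as written they do not go through. First, the invertibility of the Fr\'echet derivative: you appeal to ``the same $L_p$-maximal regularity theorem used for existence,'' but that theorem inverts a principal linearization with time-independent coefficients built from the \emph{initial} data, whereas the derivative of your $\mathcal{N}$ at the solution has coefficients $\kappa(u(t,\cdot))$, $d(u(t,\cdot))$, $l(u(t,\cdot))$, $M_j(h(t,\cdot))$ that are only of Sobolev class in $(t,x)$. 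The paper closes this by splitting $D_1\Phi(\bar z,0)=\mathbb{L}(\hat\vartheta_0)-\mathbb{K}(\hat z)$, proving $\mathbb{L}(\hat\vartheta_0)\in\Lis(\zej,\zfj)$ via a Lopatinskii--Shapiro analysis, and absorbing $\mathbb{K}$ by a Neumann series after shrinking $T$ --- which in turn forces one to work in the zero-time-trace spaces $\zej,\zfj$ (whose embedding and multiplication constants are uniform in $T$) and hence to first subtract off a special solution $z^*$ carrying the initial data. Second, the $C^k$ (in particular $C^\omega$) dependence of the transformed problem on the parameters: the frozen coefficients and the inhomogeneities $G(\hat z_0)$, $Q(\hat z_0)$ are built from $\hat\vartheta_0\in W^{2-2/p}_p$, and translating a merely Sobolev-regular function is not even $C^1$ in the translation parameter. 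The paper's remedies --- replacing $\hat\vartheta_0$ by a real-analytic Weierstrass approximation $\vartheta_A$ in the principal part, propagating the data terms by the heat semigroup ($g_1=e^{\Delta_\M t}G(\hat z_0)$) and proving analytic parameter-dependence of $g_{1,\lambda,\mu}$ by a separate implicit-function argument, and establishing regularity of $z^*$ first so that the terms $z^*_{\lambda,\mu,\eta}$ appearing in the shifted nonlinearities are themselves $C^k$ in the parameters --- are genuine ideas your sketch does not anticipate.

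A smaller structural difference: the paper uses a three-parameter family $(\lambda,\mu,\eta)$, with $\eta$ a localized translation in the normal direction, because the bulk temperature must be transformed alongside $h$; for the interface statement alone your two-parameter family (which is $\Theta_{\mu,0}$ in the paper's notation) is consistent, but the proof of the coupled system's parameter-regularity, and the temperature regularity asserted in Remark~\ref{S1: main RMK}(a), require the full family.
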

The result in the presence of kinetic undercooling reads as
\begin{theorem}
\label{S1: main theorem: gamma>0}
{\rm ($\gamma > 0$).}
Let $p>m+3$, $\sigma>0$. Suppose that
$d_i, \gamma \in C^{k+2}(0,\infty)$,
$\psi_i\in C^{k+3}(0,\infty)$ for $i=1,2$ such that
$$\kappa_i(u)=-u\psi_i^{\prime\prime}(u)>0,\quad d_i(u)>0,\quad \gamma(u)>0,\quad u\in(0,\infty).$$ Assume
the {\em regularity conditions}
$$\theta_0\in W^{2-2/p}_p(\Omega\setminus\Gamma_0)\cap C(\bar{\Omega}),\quad \theta_0>0,\quad \Gamma_0\in W^{4-3/p}_p,$$
and the {\em compatibility conditions}
\begin{equation*}
\begin{split}
\partial_{\nu_\Omega}\theta_0=0, \;\;
\big([\![\psi(\theta_0)]\!]+\sigma \cH(\Gamma_0)\big)\big(l(\theta_0)-[\![\psi(\theta_0)]\!]-\sigma \cH(\Gamma_0)\big)=\gamma(\theta_0)[\![d(\theta_0)_{\Gamma_0} \theta_0]\!].
\end{split}
\end{equation*}
Then there exists a unique $L_p$-solution $(\theta,\Gamma)$ for the Stefan problem with surface tension \eqref{stefan}
on some possibly small but nontrivial time interval $J=[0,T]$, and
$$ \mathcal{M}:=\bigcup_{t\in(0,T)}\{\{t\}\times\Gamma(t)\}
$$
is a $C^k$-manifold in $\R^{m+2}$. In particular, each manifold $\Gamma(t)$ is $C^k$ for $t\in (0,T)$.
\end{theorem}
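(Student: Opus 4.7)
The plan is to mirror the proof strategy of Theorem~\ref{S1: main theorem: gamma=0}, adapting it to accommodate the kinetic undercooling coefficient $\gamma$. First I invoke local well-posedness for \eqref{stefan} with $\gamma>0$ to produce a unique $L_p$-solution $(\theta,\Gamma)$ on some interval $J=[0,T]$; the hypersurface $\Gamma(t)$ is then representable as a normal graph of a height function $h(t,\cdot)$ over $\M:=\Gamma_0$, and the Hanzawa transformation converts \eqref{stefan} into a quasilinear parabolic system on the fixed configuration $\Omega\setminus\M$ for a pair $(u,h)\in \ea\times\fa$. The added kinetic term $\gamma(\theta)V$ on the interface is lower order relative to $\sigma\cH$ after linearization and, because $\gamma(\theta_0)>0$, it preserves the Lopatinskii--Shapiro condition required by the underlying maximal regularity theory.

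Next I introduce the parameter-dependent localized diffeomorphism $\Tlme$ from Section~3, indexed by $(\lambda,\mu,\eta)$ in a neighborhood of the origin in $\R\times\R^m\times\R^{m+1}$: the scalar $\lambda$ shifts time via a temporal cutoff, $\mu$ translates along $\M$ using the coordinate atlas, and $\eta$ extends this motion into a tubular neighborhood of $\M$ inside $\Omega$, damped by spatial cutoffs so that the Neumann boundary condition on $\partial\Omega$ is preserved. Setting $\ute:=u\circ\Tlme$ and defining $h_{\lambda,\mu,\eta}$ analogously, the pulled-back problem becomes
\[
  \tAme(\lambda,\mu,\eta,\ute,h_{\lambda,\mu,\eta})=0
\]
in the appropriate $L_p$-maximal regularity product space. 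The regularity hypotheses $d_i,\gamma\in C^{k+2}(0,\infty)$ and $\psi_i\in C^{k+3}(0,\infty)$ ensure that $\tAme$ is $C^k$-smooth jointly in its arguments, and real analytic when $k=\omega$.

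Finally, the partial Fr\'echet derivative of $\tAme$ with respect to $(u,h)$ at $(\lambda,\mu,\eta)=(0,0,0)$ coincides with the linearization of the Hanzawa-transformed Stefan operator at $(u,h)$; by the $L_p$-maximal regularity theorem already used in the local existence step, this derivative is a topological isomorphism between the relevant maximal regularity spaces on a sufficiently short subinterval $J^\prime\subset J$. The implicit function theorem then furnishes a $C^k$ (real analytic when $k=\omega$) branch $(\lambda,\mu,\eta)\mapsto(\ute,h_{\lambda,\mu,\eta})$, and uniqueness of $L_p$-solutions forces this branch to agree with the precomposition of the original solution by $\Tlme$. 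Differentiating in $(\lambda,\mu,\eta)$ at the origin and using the local action of $\Tlme$ transfers parameter smoothness into joint $C^k$ regularity of $h$ in $(t,x)$, whence $\mathcal{M}$ is a $C^k$-manifold in $\R^{m+2}$. The main obstacle will be the rigorous analysis of $\Tlme$ in the bulk: one must verify that the tubular extension $\eta$ is compatible with the jump conditions across $\M$, that the nonlinear compatibility relation of Theorem~\ref{S1: main theorem: gamma>0} remains invariant under the deformation, and that $\tAme$ stays within the class of quasilinear operators for which $L_p$-maximal regularity is known to apply.
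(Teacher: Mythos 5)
Your high-level strategy is the same as the paper's (Hanzawa transform, the localized parameter-dependent diffeomorphisms of Section~3, and the implicit function theorem), but as written the argument has a genuine gap at its central step: the claim that the partial Fr\'echet derivative of the transformed operator at the solution is an isomorphism ``by the $L_p$-maximal regularity theorem already used in the local existence step.'' That linearization is \emph{not} the model operator for which maximal regularity is known; it is the model operator plus a perturbation whose coefficients involve $\kappa'(\hat\vartheta)\partial_t\hat\vartheta$, $\mathcal{H}'(\hat h)$, $\beta'(\hat h)\partial_t\hat h$, etc., i.e.\ quantities with only the regularity of the solution itself. To absorb this perturbation one needs a smallness-in-$T$ Neumann series argument, and the relevant multiplication and embedding constants are uniform in $T$ only in the zero-time-trace spaces $\zej$, $\zfb$, $\zfc$ (the paper points out explicitly that they blow up as $T\to0$ otherwise). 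Since the initial data are nonzero, you cannot work in those spaces directly: this is precisely why the paper first constructs a special solution $z^*$ of an auxiliary \emph{linear} problem with data $e^{\Delta_\M t}G(\hat z_0)$, $e^{\Delta_\M t}Q(\hat z_0)$, proves its $C^k$/analytic regularity separately (Section~4.2), and only then applies the implicit function theorem to the remainder $\bar z=\hat z-z^*$, which has zero initial trace (Section~4.3). Your one-step scheme skips this decomposition, so the invertibility of $D_{(u,h)}\tAme$ is unjustified. A second, related omission: for $k=\omega$ the coefficients of the principal linear part must depend analytically on $(\lambda,\mu,\eta)$, which forces them to be spatially real analytic near the localization point; the paper achieves this by replacing $\kappa(\hat\vartheta_0)$, $d(\hat\vartheta_0)$, $l(\hat\vartheta_0)$ with $\kappa(\vartheta_A)$, $d(\vartheta_A)$, $l(\vartheta_A)$ for a Weierstrass approximation $\vartheta_A\in C^\omega(\bar\Omega)$. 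Without that device the map $\tAme$ is not analytic in the parameters and the analytic implicit function theorem does not apply.

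Two smaller inaccuracies: the normal parameter $\eta$ is a scalar (the translation acts in the one-dimensional fiber direction of the tubular neighborhood), not an element of $\R^{m+1}$; and the kinetic term $\gamma(\theta)V$ is not ``lower order after linearization''---the term $\gamma_1\partial_t h$ enters the principal symbol, changes the solution class $\eb$ from the $\gamma\equiv0$ case, alters the compatibility conditions, and must be carried through the Lopatinskii--Shapiro verification rather than treated perturbatively.
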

\medskip
\begin{remark}
\label{S1: main RMK}
(a) 
For $k\in \N\cup\{\infty\}$, under the conditions in Theorems~\ref{S1: main theorem: gamma=0} and \ref{S1: main theorem: gamma>0}, we can show that the temperature satisfies
$$\theta \in C^k(((0,T)\times\Omega)\setminus \mathcal{M}). $$
See Remark~\ref{S5: Proof of main RMK} for a justification. 
However, in the case $k=\omega$, 
additional work is needed to establish the interior analyticity of $\theta$ 
in the bulk phases. 
In order to keep this already long paper at a reasonable length, 
we will refrain from establishing this here.
\smallskip\\
(b) According to \cite[Theorem 3.9]{PruSimZac12} we obtain a solution of \eqref{stefan} in the state manifold $\mathcal{SM_\gamma}$ on a maximal interval of existence $[0,t_*)$.
The regularity assertions of Theorem~\ref{S1: main theorem: gamma=0}
and Theorem~\ref{S1: main theorem: gamma>0}, respectively, then hold true on $[0,t_*)$.
\qed
\end{remark}

\textbf{Notations:} 
Throughout this paper, we always assume that
\begin{mdframed}
\begin{itemize}
\item $E$ denotes a finite dimensional Banach space.
\item $m+3<p<\infty$, and $s\geq 0$, unless stated otherwise.
\end{itemize}
\end{mdframed}
Given two Banach spaces $X,Y$, the notation $\L(X,Y)$ stands for the set of all bounded linear operators from $X$ to $Y$, and
$\Lis(X,Y)$ denotes the set of all bounded linear isomorphisms from $X$ to $Y$.
\smallskip\\
For any topological sets $U$ and $V$, $U\subset\subset V$ means that $\bar{U}\subset \mathring{V}$ with $\bar{U}$ compact. 
\medskip

\section{\bf Parameterization over a fixed interface}

Let $\Omega\subset\R^{m+1}$ be a bounded domain with boundary $\partial\Omega$ of class $C^2$, and suppose that $\Gamma_0\subset \Omega$ is a closed embedded hypersurface of class $C^2$, that is, a $C^2$-manifold which is the boundary of a bounded domain $\Omega_1(0)\subset \Omega$. Set $\Omega_2(0):=\Omega \setminus \bar{\Omega}_1(0)$. Following \cite{PruSimZac12}, we may approximate $\Gamma_0$ by an $m$-dimensional real analytic compact closed   oriented reference hypersurface $(\M,g)$, with $g$  the tangential metric induced by the Euclidean metric $g_{m+1}$, in the sense that there exists a function $h_0\in C^2(\M,(-a,a))$ such that the map
\begin{align*}
\Lambda_{h_0}:\M\rightarrow \R^{m+1}:\quad [\p\mapsto \p+h_0(\p) \nu_\M(\p)]
\end{align*}
is a diffeomorphism from $\M$ onto $\Gamma_0$, where $\nu_\M$ is the outer normal of $\M$. The positive constant $a$ depends on the inner and outer ball conditions of $\M$. It is well-known that $\M$ admits a $a$-tubular neighborhood $\T_a$, which means that the map
\begin{align*}
\Lambda:\M\times (-a,a)\rightarrow\R^{m+1}:\hspace*{.5em}(\p,r)\mapsto \p+r \nu_\M(\p)
\end{align*}
is a diffeomorphism from the fiber bundle $\M\times (-a,a)$ onto ${\im}(\Lambda):=\T_a$. For sufficiently small $a$, $\bar{\T}_a\subset\Omega$. $\M$ bounds a domain $\Omega_1$, and we set $\Omega_2=\Omega\setminus \bar{\Omega}_1$. The inverse $\Lambda^{-1}$ can be decomposed as 
\begin{align*}
\Lambda^{-1}:\T_a \rightarrow  \M\times(-a,a): \hspace*{.5em} z\mapsto (\Pi(z),d_\M(z)),
\end{align*}
where $\Pi(z)$ is the metric projection of $z$ onto $\M$ and $d_\M(z)$ denotes the signed distance from $z$ to $\M$ such that $|d_\M(z)|={\rm dist}(z,\M)$ and $d_\M(z)<0$ iff $z\in \Omega_1$. It follows from the inverse function theorem that the maps $\Pi$ and $d_\M$ are both real analytic.
\smallskip\\
We may use the map $\Lambda$ to parameterize the unknown free boundary $\Gamma(t)$ over $\M$ by a height function $h(t):\Sigma\to \R$ via
\begin{align*}
\Gamma(t):=\{ \p+h(t,\p)\nu_\M(\p),\quad \p\in\M\},\quad t\geq 0,
\end{align*}
for small $t\geq 0$, at least. $\Gamma(t)$ bounds a bounded region $\Omega_1(t)$. Put $\Omega_2(t)=\Omega\setminus \bar{\Omega}_1(t)$. Choosing an auxiliary function $\zeta\in \mathcal{D}((-2a/ 3,2a/ 3),[0,1])$ such that $\zeta|_{(-a/ 3,a/ 3)}\equiv 1$, we may extend the above diffeomorphism onto $\bar{\Omega}$ via
\begin{align*}
\Xi_h(t,z)=z+\zeta(d_\M(z))h(t,\Pi(z))\nu_\M(\Pi(z))=:z+\Upsilon(h)(t,z).
\end{align*}
We have transformed the time varying regions $\Omega_i(t)$ to the fixed domains $\Omega_i$. This is the direct mapping method, also known as Hanazawa transformation. By means of this transformation, we obtain the following transformed problem for $\vartheta(t,z):=\theta(t,\Xi_h(t,z))$:
\begin{align}
\label{S2: transf Stefan}
\left\{\begin{aligned}
\kappa(\vartheta)\partial_t \vartheta+\mathcal{A}(\vartheta,h)\vartheta&=\kappa(\vartheta)\mathcal{R}(h)\vartheta  &&\text{in}&&\Omega\setminus\M,\\
\partial_{\nu_{\Omega}}\vartheta&=0 &&\text{on}&&\partial\Omega,\\
[\![\vartheta]\!]&=0 &&\text{on}&&\M,\\
[\![\psi(\vartheta)]\!]+\sigma{\cH}(h)&=\gamma(\vartheta)\beta(h)\partial_t h &&\text{on}&& \M, \\
\{l(\vartheta)-\gamma(\vartheta)\beta(h)\partial_t h \}\beta(h)\partial_t h +\mathcal{B}(\vartheta,h)\vartheta&=0 &&\text{on}&& \M,\\
\vartheta(0)=\vartheta_0, \quad
h(0)&=h_0. &&
\end{aligned}\right.
\end{align}
Here $\mathcal{A}(\vartheta,h)$ and $\mathcal{B}(\vartheta,h)$ denote the transformations of $-{\rm div}(d\nabla )$ and
$-[\![d\partial_{\nu_\Gamma}]\!]$, respectively. Moreover, ${\cH}(h) $ stands for the mean curvature of the hypersurface 
$$\Gamma_h := \Lambda_h ( \M).$$
The term $\beta(h)\partial_t h$ represents the normal velocity $V$ with $\beta(h):=(\nu_\M|\nu_\Gamma(h))$,
where $\nu_\Gamma(h)$ is the outer normal field of $\Gamma_h$,
and
$$\mathcal{R}(h)\vartheta :=\partial_t \vartheta -\partial_t \theta\circ \Xi_h.$$
It is shown in \cite{PruSimZac12} that $\nu_\Gamma(h)=\beta(h) (\nu_\M -\alpha(h))$, and
$$\beta(h)=(1+|\alpha(h)|^2)^{-1/2},\hspace{1em} \mathcal{R}(h)\vartheta=(\nabla  \vartheta|{[I + \nabla\Upsilon(h)^{\sf T}]}^{-1}\partial_t\Upsilon(h) ). $$
Here with the Weingarten map $L_\M=-\nabla_\M \nu_\M$. We have
\begin{align*}
\alpha(h):=M_0(h)\nabla_\M h,\quad \text{where}\quad M_0(h)&:=(I- h L_\M)^{-1}.
\end{align*}
With $\partial_\nu:= \partial_{\nu_{\M}}$, the operator $\mathcal{B}(\vartheta,h)$ becomes
\begin{align*}
 \mathcal{B}(\vartheta,h)\vartheta&= -[\![d(\theta)\partial_{\nu_\Gamma} \theta]\!]\circ\Xi_h=-([\![d(\vartheta)(I-M_1(h))\nabla \vartheta]\!]|\nu_\Gamma)\\
 &= -\beta(h)([\![d(\vartheta)(I-M_1(h))\nabla \vartheta]\!]|\nu_\Sigma -\alpha(h))\\
&= -\beta(h)[\![d(\vartheta)\partial_{\nu} \vartheta]\!]
+\beta(h)([\![d(\vartheta)\nabla \vartheta]\!]|(I-M_1(h))^{\sf
T}\alpha(h)),
\end{align*}
where $M_1(h):=[(I+\nabla\Upsilon(h)^{\sf T})^{-1}\nabla\Upsilon(h)^{\sf T}]^{\sf T}$, and finally 
\begin{align*}
 \mathcal{A}(\vartheta,h)\vartheta= & -{\rm div}( d(\theta)\nabla \theta)\circ\Xi_h= -((I-M_1(h))\nabla|d(\vartheta)(I-M_1(h))\nabla \vartheta)\\
 = & -d(\vartheta)\Delta \vartheta + d(\vartheta)[M_1(h)+M_1^{\sf T}(h)-M_1(h)M_1^{\sf T}(h)]:\nabla^2 \vartheta\\
&-d^\prime(\vartheta)|(I-M_1(h))\nabla \vartheta|^2
 + d(\vartheta)((I-M_1(h)):\nabla M_1(h)|\nabla \vartheta).
\end{align*}
We recall that, for matrices  $A,B\in\R^{n\times n}$, $
A:B=\sum_{i,j=1}^n a_{ij}b_{ij}=\text{tr}\,(AB^{\sf T}) $ denotes
the inner product.

We set
\begin{align*}
&\ea:=\{\vartheta\in H^1_p(J;L_p(\Omega))\cap L_p(J; H^2_p(\Omega\setminus\M)): [\![\vartheta]\!]=0, \partial_{\nu_\Omega}\vartheta=0\},\\
&\eb:=
\begin{cases}
W^{3/2-1/2p}_p(J; L_p(\M))\cap W^{1-1/2p}_p(J; H^2_p(\M))\cap L_p(J; W^{4-1/p}_p(\M)),\\
\hspace{27em} \gamma\equiv 0,\\
W^{2-1/2p}_p(J;L_p(\M))\cap L_p(J;W^{4-1/p}_p(\M)),\hspace{9.3em} \gamma>0,
\end{cases}\\
&\ej:=\ea\times\eb,
\end{align*}
that is, $\ej$ denotes the solution space for \eqref{S2: transf Stefan}. 
Similarly, we define
\begin{align}
\label{S2: def of Fj}
\left\{\begin{aligned}
&\fa:=L_p(J;L_p(\Omega)),\\
&\fb:=W^{1-1/2p}_p(J;L_p(\M))\cap L_p(J; W^{2-1/p}_p(\M)),\\
&\fc:=W^{1/2-1/2p}_p(J;L_p(\M))\cap L_p(J; W^{1-1/p}_p(\M)),\\
&\fd:=[W^{2-2/p}_p(\Omega\setminus\M)\cap C(\bar{\Omega})]\times W^{4-3/p}_p(\M).
\end{aligned}\right.
\end{align}
A left subscript zero means vanishing time trace at $t=0$, whenever it exists. So for example
$\zeb=\{h\in \eb:\; h(0)=\partial_t h(0)=0\}$
for $p>3$.
\smallskip\\
Whenever $(J)$ is replaced by $(J,U)$, or $(J,U;E)$  for any set $U$, e.g., in $\ej$, it always means that in the corresponding spaces the original spatial domain is replaced by $U$ and the functions become $E$-valued in the latter case. For example, 
\begin{align*}
\mathbb{F}_2(J,\T_a;E):=W^{1-1/2p}_p(J;L_p(\T_a,E))\cap L_p(J;W^{2-1/p}_p(\T_a,E)).
\end{align*}

We equip $\M$ with a normalized real analytic atlas $\A=(\Ok,\vpk)_{\kappa\in\K}$, where $\K$ is a finite index set,  in the sense that $\vpk(\Ok)=\Q:=(-1,1)^m$, for every $\kappa$. 
Let  $\psk:=[\vpk]^{-1}$.  
Then we may endow $\Omega$ with a real analytic atlas $\Ae=(\Oek,\vpek)_{\kappa\in \Ke}$ with $\Ke:=\K\cup\{\kappa_1,\kappa_2\}$ by the following construction. When $\kappa\in\K$
\begin{align*}
\Oek:=\T_{a,\kappa}:=\Lambda({\Ok\times(-a,a)}),\hspace*{1em} \vpek(z):=(\vpk\circ\Pi(z),d_\M(z)),\hspace*{.5em}z\in\Oek.
\end{align*}
Let $\Qa:=\Q\times(-a,a)$ and 
$$\psek:=[\vpek]^{-1}:\Qa \rightarrow \Oek: \quad[(x,y)\mapsto \psk(x)+y\nu_{\M}(\psk(x))]$$
with $x\in\Q$ and $y\in(-a,a)$. For $\kappa\notin\K$, we set
$$
(\O_{\frak{e},\kappa_i},\varphi_{\frak{e},\kappa_i}):=(\O_{\frak{e},\kappa_i},\id_{\O_{\frak{e},\kappa_i}}),\quad  i=1,2,
$$
where $\O_{\frak{e},\kappa_i}:=\Omega_i\setminus \bar{\T}_{a/6}$ for $i=1,2$. It is immediate from our construction that the atlas $\Ae$ is real analytically compatible with the Euclidean structure of $\Omega$. Moreover, all the transition maps are $C^\infty$-continuous and have bounded derivatives, which we refer to as $BC^\infty$-continuous. 
\smallskip\\
A family $(\pk)_{\kappa\in\K}$ is called a localization system subordinate to $\A$ if 
\begin{center}
$\pk\in\mathcal{D}(\Ok,[0,1])$ and $(\pi_{\kappa}^2)_{\kappa\in\K}$ is a partition of unity subordinate to $(\Ok)_{\kappa\in\K}$.
\end{center}
Based on \cite[Lemma~3.2]{Ama13}, $\M$ admits a localization system $(\pk)_{\kappa\in\K}$ subordinate to $\A$.
Using the cut-off function $\zeta$ introduced above, we may construct a new localization system $(\pek)_{\kappa\in\Ke}$ subordinate to $\Ae$ as follows:
\begin{itemize}
\item[(L1)] For any $\kappa\in\K$, $\pek\in \mathcal{D}(\Oek,[0,1])$ is defined by
\begin{align*}
\pek(z):=
\begin{cases}
\zeta(d_\M(z))\pk(\Pi(z)), \hspace{1em}& z\in \Oek,\\
0, &z\notin \Oek.
\end{cases}
\end{align*}
\item[(L2)] For $i=1,2$, $\pi_{\frak{e},\kappa_i}\in C^\infty(\bar{\O}_{\frak{e}_i,\kappa_i},[0,1])$ is defined by
\begin{align*}
\pi_{\frak{e},\kappa_i}(z)=
\begin{cases}
\displaystyle
\frac{1-\zeta(d_\M(z))}{\sqrt{(1-\zeta(d_\M(z)))^2+\zeta^2(d_\M(z))}}, \hspace{1em}& z\in \Omega_i\cap\T_a,\\
1, &z\in \bar{\Omega}_i\setminus\T_a.
\end{cases}
\end{align*}
\end{itemize}
$(\pi^2_{\frak{e},\kappa})_{\kappa\in\Ke}$ forms a localization system subordinate to $(\Oek)_{\kappa\in\Ke}$. 
We put 
\begin{align*}
\Xk:=
\begin{cases}
\R^{m+1},\hspace*{1em}&\kappa\in\K,\\
\O_{\frak{e},\kappa_i}, &i=1,2.
\end{cases}
\end{align*}
For any finite dimensional Banach space $E$, the maps $\Rcek$ and $\Reek$ are defined by 
\begin{align*}
\Rcek: L_{1,loc}(\Omega,E)\rightarrow L_{1,loc}(\Xk,E):\hspace*{.5em}u \mapsto \kef\pek u, \quad \kappa\in \Ke,
\end{align*}
and
\begin{align*}
\Reek:L_{1,loc}(\Xk,E)\rightarrow  L_{1,loc}(\Omega,E): \hspace*{.5em} u\mapsto \pek\keb u, \quad \kappa\in \Ke.
\end{align*}
Here and in the following it is understood that a partially defined and compactly supported map is automatically extended over the whole base manifold by identifying it to be zero outside its original domain.
Moreover, let
\begin{align*}
\Rce: L_{1,loc}(\Omega,E)\rightarrow \prod\limits_{\kappa\in\Ke}L_{1,loc}(\Xk,E):\hspace*{.5em}u \mapsto (\Rcek u)_{\kappa\in\Ke},
\end{align*}
and
\begin{align*}
\Ree:\prod\limits_{\kappa\in\Ke} L_{1,loc}(\Xk,E)\rightarrow  L_{1,loc}(\Omega,E): \hspace*{.5em} (u_{\kappa})_{\kappa}\mapsto \sum\limits_{{\kappa\in\Ke}}\pek\keb u.
\end{align*}
On the manifold $\M$, similar maps $\Rck$, $\Rc$, $\Rek$, and $\Re$ are defined in terms of $\pk$, $\vpk$, and $\psk$. 
See \cite{ShaoPre}.

\subsection{Function spaces}
For any open subset $U\subset\R^n$, the Banach space $BC^{k}(U,E)$ is defined by
\begin{align*}
BC^{k}(U,E):=(\{u\in C^k(U,E):\|u\|_{k,\infty}<\infty \},\|\cdot\|_{k,\infty}).
\end{align*} 
The closed linear subspace $BU\!C^k(U,E)$ of $BC^{k}(U,E)$ consists of all functions $u\in BC^{k}(U,E)$ such that $\partial^{\alpha}u$ is uniformly continuous for all $|\alpha|\leq k$. Moreover,
\begin{align*}
BC^{\infty}(U,E):=\bigcap_{k}BC^k(U,E)=\bigcap_{k}BU\!C^k(U,E).
\end{align*}
It is a Fr\'echet space equipped with the natural projective topology. 
\smallskip\\
For $0<s<1$, $0<\delta\leq\infty$ and $u\in E^U$, the seminorm $[\cdot]^\delta_{s,\infty}$ is defined by
\begin{align*}
[u]^{\delta}_{s,\infty}:=\sup_{h\in(0,\delta)^n}\frac{\|u(\cdot+h)-u(\cdot)\|_{\infty}}{|h|^s}, \hspace*{1em}[\cdot]_{s,\infty}:=[\cdot]^{\infty}_{s,\infty}.
\end{align*}
Let $k<s<k+1$. The \textbf{H\"older} space $BC^s(U,E)$ is defined as 
\begin{align*}
BC^s(U,E):=(\{u\in BC^k(U,E):\|u\|_{s,\infty}<\infty \},\|\cdot\|_{s,\infty}),
\end{align*}
where $\|u\|_{s,\infty}:=\|u\|_{k,\infty}+\max_{|\alpha|=k}[\partial^{\alpha} u]_{s-k,\infty}$.
\smallskip\\
In order to have a general theory that is applicable to situations other than the Stefan problem,
we also introduce the \textbf{little H\"older} space of order $s\geq 0$, which is defined by
\begin{center}
$bc^s(U,E):=$ the closure of $BC^{\infty}(U,E)$ in $BC^s(U,E)$.
\end{center}
By \cite[formula~(11.13), Corollary~11.2, Theorem~11.3]{AmaAr}, we have 
\begin{align*}
bc^k(U,E)=BU\!C^k(U,E),
\end{align*}
and for $k<s<k+1$
\begin{center}
$u\in BC^s(U,E)$ belongs to $bc^s(U,E)$ iff $\lim\limits_{\delta\rightarrow 0}[\partial^{\alpha}u]^{\delta}_{s-[s],\infty}=0$, \hspace{1em} $|\alpha|=[s]$.
\end{center}
The spaces $\F^s(\M,E)$ with $\F\in \{bc,BC,W_p,H_p\}$ are defined in terms of the smooth atlas $\A$, that is, $u\in\F^s(\M,E)$ iff $\kf u:= u \circ \psk \in \F^s(\Q,E)$ for all $\kappa\in\K$. See \cite{Ama13} and \cite{ShaoPre}.
\smallskip\\
Throughout, for any finite index set $\mathbb{A}$ and Banach spaces $E_\alpha$, $\alpha\in\mathbb{A}$, it is understood that the space $\mathbb{E}:=\prod\limits_{\alpha\in\mathbb{A}}E_\alpha$ is equipped with the maximum norm. 
For $\F\in \{bc,BC,BU\!C,W_p,H_p\}$, we put 
$$\bF^s_{\frak{e}}:=\prod\limits_{\kappa\in\Ke}{\F}^s(\Xk,E),
\quad  \bF^s:=\prod\limits_{\kappa\in\K}{\F}^s_{\kappa},$$ 
where ${\F}^s_{\kappa}=\F^s(\R^m,E)$.
\begin{prop}
\label{S2: retraction}
Suppose that $\frak{B}=\F$ when $\F\in \{bc, W_p,H_p\}$, or $\frak{B}\in \{ BC, BU\!C\}$ when $\F=BC$. Then 
\begin{itemize}
\item[(a)] $\Re$ is a retraction from $\bB^s$ onto $\F^s(\M,E)$ with $\Rc$ as a coretraction. Moreover, the case $s<0$ and $\F\in \{W_p,H_p\}$ is also admissible.
\item[(b)] $\Ree$ is a retraction from $\bB^s_{\frak{e}}$ onto $\F^s(\Omega,E)$ with $\Rce$ as a coretraction.
\end{itemize}
\end{prop}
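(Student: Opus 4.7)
The plan is to exploit the fact that $(\pk^2)_{\kappa\in\K}$ is a partition of unity subordinate to $(\Ok)_{\kappa\in\K}$. The key algebraic identity is the retraction property $\Re\circ\Rc = \id_{\F^s(\M,E)}$, which for $u\in\F^s(\M,E)$ is verified by the direct computation
\begin{align*}
\Re\circ\Rc(u)\;=\;\sum_{\kappa\in\K}\pk\,\kb\bigl(\kf(\pk u)\bigr)\;=\;\sum_{\kappa\in\K}\pk^{2}\,u\;=\;u,
\end{align*}
where the second equality uses that $\kb\kf$ acts as the identity on functions supported in $\Ok$ (since $\psk=\vpk^{-1}$), and the third is the partition-of-unity property. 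This holds pointwise in $L_{1,loc}(\M,E)$, which suffices once the boundedness of both $\Rc$ and $\Re$ on the relevant spaces has been established.

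For the boundedness I would reduce to multiplier and diffeomorphism-invariance results on $\R^m$ via the atlas characterization of $\F^s(\M,E)$: by definition $u\in\F^s(\M,E)$ iff $\kf u\in\F^s(\Q,E)$ for every $\kappa\in\K$, with an equivalent norm realized by the localized expression $\|u\|\sim\sup_{\kappa}\|\kf(\pk u)\|_{\F^s(\R^m,E)}$. Boundedness of $\Rc:\F^s(\M,E)\to\bB^s$ then reduces to (i) boundedness of pointwise multiplication by $\pk\in\mathcal{D}(\Ok)$ on $\F^s(\M,E)$, and (ii) boundedness of the pullback $\kf$ sending $\F^s$-functions supported in $\Ok$ into $\F^s(\R^m,E)$ after trivial extension by zero. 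For $\Re:\bB^s\to\F^s(\M,E)$ the same multiplier and pullback estimates, followed by summation over the finite index set $\K$, produce an element of $\F^s(\M,E)$ whose norm is controlled by the product norm on $\bB^s$. These facts are classical for $\F\in\{BC,BU\!C,W_p,H_p\}$ on $\R^m$; the little H\"older case $\F=bc$ follows from the $BC^s$ case via the density definition of $bc^s$, and the negative-order range $s<0$ for $W_p,H_p$ is handled by duality.

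Part~(b) is entirely analogous, using the atlas $\Ae$ and the localization system $(\pek^{2})_{\kappa\in\Ke}$ on $\Omega$, whose partition-of-unity property on $\bar{\Omega}$ is a direct consequence of the definitions~(L1)--(L2). The only new feature is the two identity charts on $\bar{\Omega}_i\setminus\bar{\T}_{a/6}$, which contribute no genuine coordinate change and are therefore trivial to handle. The main obstacle is giving a uniform treatment across the rather heterogeneous list of function spaces---in particular reconciling the density-defined little H\"older scale $bc^s$ with the others and handling negative $s$ for $W_p,H_p$; I would dispose of this by citing the corresponding local multiplier and pullback estimates on $\R^m$ from \cite{Ama13} and \cite{ShaoPre}, so that the argument reduces to verification rather than fresh analysis.
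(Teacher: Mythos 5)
Your proposal is correct and follows essentially the same route as the paper: the paper's proof consists precisely of citing the localization results of Amann (\cite{Ama13}, \cite{AmaAr}) and \cite{ShaoPre}, which encapsulate the retraction identity $\Re\circ\Rc=\sum_\kappa\pi_\kappa^2\,u=u$ and the local multiplier/pullback bounds you spell out. Your additional remarks on the $bc^s$ density reduction and the duality treatment of $s<0$ match how those cited sources (and the paper itself, later in the proof of Proposition 3.8) handle these cases.
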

\begin{proof}
(a) The assertions are special cases of \cite[Theorem~9.3]{AmaAr} and \cite[Propositions~2.1, 2.2]{ShaoPre}.
\smallskip\\
(b) For $\kappa\in\K$, the boundedness of the maps $\Reek$ and $\Rcek$ follows from the proofs in \cite[Section~6, 7]{Ama13} and \cite[Section~12]{AmaAr}. The continuity of the maps $\mathcal{R}^c_{\frak{e},\kappa_i}$ and $\mathcal{R}_{\frak{e},\kappa_i}$ is straightforward.
\end{proof}
Let $J:=[0,T]$ for some $T>0$.
Due to the temporal independence of the above retraction-coretraction systems, we readily infer that
\begin{prop}
\label{S2: retraction-involving time}
Let $r\geq 0$. 
Suppose that $\frak{C}\in\{C,W_p\}$, and  
$\frak{B}=\F$ when $\F\in \{bc, W_p,H_p\}$, or $\frak{B}\in \{ BC, BU\!C\}$ when $\F=BC$.
Then 
\begin{itemize}
\item[(a)] $\Re$ is a retraction from $\frak{C}^r(J;\bB^s)$ onto $\frak{C}^r(J;\F^s(\M,E))$ with $\Rc$ as a coretraction. Moreover, the case $s<0$ and $\F\in \{W_p,H_p\}$ is also admissible.
\item[(b)] $\Ree$ is a retraction from $\frak{C}^r(J;\bB^s_{\frak{e}})$ onto $\frak{C}^r(J;\F^s(\Omega,E))$ with $\Rce$ as a coretraction.
\end{itemize}
\end{prop}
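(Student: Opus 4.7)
The plan is to deduce Proposition~\ref{S2: retraction-involving time} directly from Proposition~\ref{S2: retraction} by lifting the time-independent spatial retraction/coretraction systems to vector-valued function spaces on $J$. The key observation is that the definitions
\[
\Rcek u=\kef\pek u,\qquad \Reek u=\pek\keb u,
\]
and their $\M$-analogues, involve only pointwise multiplication by the smooth cutoffs $\pek,\pk$ and pullback through the coordinate maps $\vpek,\psek,\vpk,\psk$; none of these ingredients carry any dependence on $t\in J$. Consequently the maps $\Ree,\Rce,\Re,\Rc$ of Proposition~\ref{S2: retraction} may be regarded as bounded linear operators between fixed Banach spaces $X\to Y$ that do not depend on the time variable.

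First, I would invoke the general functorial principle: any bounded linear operator $T\in\L(X,Y)$ between Banach spaces induces, by postcomposition $(\tilde T u)(t):=T(u(t))$, a bounded linear operator $\tilde T\in\L(\mathfrak{C}^r(J;X),\mathfrak{C}^r(J;Y))$ for each $r\geq 0$ and $\mathfrak{C}\in\{C,W_p\}$, with norm bounded by $\|T\|_{\L(X,Y)}$. For $\mathfrak{C}=C$ and integer $r$ this is immediate from the fact that $T$ commutes with (difference quotients approximating) $\partial_t$; for $\mathfrak{C}=W_p$ with $r\in\mathbb N$ it follows from the Bochner integral characterization, and for non-integer $r$ from the Sobolev--Slobodeckij seminorm
\[
[u]_{W^r_p(J;X)}^p=\int_J\!\int_J\frac{\|u(t)-u(\tau)\|_X^p}{|t-\tau|^{1+(r-[r])p}}\,dt\,d\tau,
\]
which transforms covariantly under $T$ with constant $\|T\|^p$. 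This yields bounded maps $\Ree:\frak{C}^r(J;\bB^s_{\frak{e}})\to\frak{C}^r(J;\F^s(\Omega,E))$, $\Rce$ in the reverse direction, and analogously $\Re,\Rc$ on $\M$.

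Second, the retraction identities $\Re\circ\Rc=\id$ and $\Ree\circ\Rce=\id$, which hold on the spatial spaces by Proposition~\ref{S2: retraction}, lift pointwise in $t$ to the corresponding identities on $\frak{C}^r(J;\cdot)$. This establishes both (a) and (b). I expect no serious obstacle here; the only point requiring care is the case $s<0$, $\F\in\{W_p,H_p\}$ in part (a), where the spatial spaces are distributional, but the same lifting argument applies since Proposition~\ref{S2: retraction} already supplies boundedness of the relevant operators on these negative-order spaces, and the vector-valued extension by postcomposition is insensitive to the sign of $s$.
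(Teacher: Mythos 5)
Your proposal is correct and follows exactly the route the paper takes: the paper gives no separate proof but simply asserts the result "due to the temporal independence of the above retraction-coretraction systems," which is precisely your lifting-by-postcomposition argument made explicit. The additional detail you supply (the vector-valued extension of a bounded operator to $C^r(J;\cdot)$ and $W^r_p(J;\cdot)$, and the pointwise-in-$t$ lifting of the identity $\Re\circ\Rc=\id$) is a faithful elaboration of what the paper leaves implicit.
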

\medskip

\section{\bf Parameter-dependent diffeomorphisms}

\subsection{\bf Model diffeomorphisms in Euclidean space}
Given  $z_c\in \T_{a/3,{\kappa_c}}\setminus\Sigma \subset\Omega$ for some ${\kappa_c}\in \K$, 
let 
$$(x_c,y_c)=(\vpt\circ\Pi(z_c),d_\M(z_c)).$$ 
We may choose a sufficiently small positive constant $\varepsilon_0$ such that 
\begin{align*}
\B^m(x_c,5\varepsilon_0)\subset\Q,\hspace*{1em} \B(y_c,5\varepsilon_0)\subset(-1)^i(0,a/3).
\end{align*}
Let $\B_{\varepsilon,\alpha}:=\B^m(x_c,\varepsilon)\times(-\alpha,\alpha)$. 
We pick several auxiliary functions in the following manner:
\begin{itemize}
\item $\varpi \in\mathcal{D}(-13a/18,13a/18),[0,1])$, and $\varpi|_{(-2a/3,2a/3)}\equiv 1$.
\item $\chi_m\in\mathcal{D}(\B^m(x_c,2\varepsilon_0),[0,1])$, and $\chi_m|_{\B^m(x_c,\varepsilon_0)}\equiv 1$.
\item $\chi\in\mathcal{D}(\B(y_c,2\varepsilon_0),[0,1])$, and $\chi|_{\B(y_c,\varepsilon_0)}\equiv 1$.
\item $\vge\in\mathcal{D}(\pst(\B_{5\varepsilon_0,17a/18}),[0,1])$, and $\vge|_{\pst(\B_{4\varepsilon_0,8a/9})}\equiv 1$. We set $\varsigma:=\vge|_{\M}$
\end{itemize}
Then we can introduce a localized translation on $\Qa$ as follows
\begin{align*}
\theta_{\mu,\eta}:(x,y)\mapsto (x+\chi_m(x)\varpi(y)\mu, y+\chi_m(x)\chi(y)\eta),\hspace*{1em} (\mu,\eta)\in\Br,
\end{align*}
with $\mu\in\R^m$, $\eta\in\R$ and sufficiently small $r_0>0$. Henceforth, $\B(x_0,r_0)$ always denotes the ball with radius $r_0$ centered at $x_0$ in ${\R}^n$. The dimension $n$ of the ball is not distinguished as long as it is clear from the context. 
\smallskip\\
The related partial translations in horizontal/vertical directions can be defined separately as
\begin{align}
\label{S3: sep translations}
\begin{cases}
\theta_\mu: (x,y)\mapsto (x+\chi_m(x)\varpi(y)\mu, y),\\
\bar{\theta}_\eta: (x,y)\mapsto (x, y+\chi_m(x)\chi(y)\eta).
\end{cases}
\end{align}
For $r_0$ small and every $(\mu,\eta)\in\Br$
\begin{align}
\label{S3: comp of tme}
\theta_{\mu,\eta}=\theta_\mu\circ \bar{\theta}_\eta.
\end{align}
It is not hard to show that for sufficiently small $r_0$, we have 
$\theta_{\mu,\eta}(\B_{3\varepsilon_0,7a/9})\subset \B_{3\varepsilon_0,7a/9}$ for any $(\mu,\eta)\in\B(0,r_0)$.
\eqref{S3: sep translations} implies that 
\begin{align}
\label{S3: pull-back&push-forward}
\tme=\te\circ\tm,\quad \text{and}\quad\theta^{\mu,\eta}_\ast:=[\theta_{\mu,\eta}]_\ast=\theta^\mu_\ast\circ\bar{\theta}^\eta_\ast.
\end{align}
Since $\theta_\mu$ and $\bar{\theta}_\eta$ are the restrictions of truncated translations in the sense of those defined in \cite{EscPruSim03} in horizontal and vertical direction, respectively, the readers should have no difficulty convincing themselves that all the results in \cite{EscPruSim03} are at our disposal for functions defined on an open subset $U\subset \R^{m+1}$ and the transformations $\tm$ and $\te$. Taking advantage of these considerations, we will state some properties of $\tme$ for later use in the following with brief proofs. 

Hereafter, we assume $O\subset U\subset\R^{m+1}$ are open, and $O$ contains $\bar{\B}_{3\varepsilon_0,7a/9}$.
We conclude from $\theta_{\mu,\eta}(U)\subset U$ that $\tme:\F^s(U,E)\rightarrow E^{U}$. 
In the rest of this section, we always assume the following, unless mentioned otherwise, 
\begin{mdframed}
$$1<p<\infty,\quad n,l\in\Nz,\quad k\in\Nz\cup\{\infty,\omega\}.$$
\end{mdframed}
\begin{prop}
\label{S3: U-continuity}
Suppose that $\F\in\{bc, BU\!C, W_p, H_p\}$ and $u\in C^{n+k}(O,E)\cap \F^s(U,E)$ with $s\in [0,n]$ when $\F\in\{bc, W_p, H_p\}$, or with $s=n$ when $\F=BU\!C$. Then for sufficiently small $r_0$,
\begin{itemize}
\item[(a)] $\theta_{\mu,\eta}\in{\Dfi}(U)$, $(\mu,\eta)\in\Br.$
\item[(b)] 
$\tme\in \Lis(\F^s(U,E))$, $[\tme]^{-1}=\theta^{\mu,\eta}_\ast$, $(\mu,\eta)\in\Br.
$
Moreover, there exists a constant $M>0$ such that
$$\|\tme\|_{\L(\F^s(U,E))}\leq M,\hspace*{1em} (\mu,\eta)\in\Br.$$
\item[(c)] $[(\mu,\eta)\mapsto \tme u]\in C^k(\Br,\F^s(U,E)).$
\end{itemize}
\end{prop}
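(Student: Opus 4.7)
The plan is to exploit the factorization $\theta_{\mu,\eta}=\theta_\mu\circ\bar\theta_\eta$ from \eqref{S3: comp of tme} in order to reduce every claim to a statement about a single-parameter truncated translation, where the results of \cite{EscPruSim03} apply verbatim. Specifically, I would first prove (a), (b), (c) for $\tm$ with parameter $\mu\in\B(0,r_0)\subset\R^m$ and for $\te$ with parameter $\eta\in\R$ separately, and then compose using $\theta^{\mu,\eta}_\ast=\theta^\mu_\ast\circ\bar\theta^\eta_\ast$ from \eqref{S3: pull-back&push-forward}. The proposition then follows by observing that (i) compositions of smooth diffeomorphisms are smooth diffeomorphisms, (ii) products of uniformly bounded operators remain uniformly bounded, and (iii) the composition of two jointly $C^k$ maps (with range in a suitable operator-valued setting) is jointly $C^k$.

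For (a), I would compute the Jacobian $D\theta_{\mu,\eta}(x,y)$ explicitly from the defining formula; it equals $I$ plus a term linear in $(\mu,\eta)$ whose coefficients are bounded in terms of $\|\chi_m\|_{C^1},\|\chi\|_{C^1},\|\varpi\|_{C^1}$. Picking $r_0$ small forces $\det D\theta_{\mu,\eta}\geq 1/2$ uniformly, so $\theta_{\mu,\eta}$ is a local diffeomorphism; combining this with the self-mapping property of $\theta_{\mu,\eta}$ on $U$ already noted above \eqref{S3: pull-back&push-forward}, and with injectivity (which is standard for a small Lipschitz perturbation of the identity), yields a global $C^\infty$ diffeomorphism. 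For (b), uniform bounds on all derivatives $\partial^\alpha\theta_{\mu,\eta}$ hold by construction, and the classical composition estimates in each of the function spaces $bc^s,BU\!C^s,W_p^s,H_p^s$ give uniform operator bounds for $\theta^{\ast}_{\mu,\eta}$ on $\Br$; the inverse identity $[\tme]^{-1}=\theta_\ast^{\mu,\eta}$ is immediate from $\theta_{\mu,\eta}\circ\theta_{\mu,\eta}^{-1}=\id_U$ once one checks, by the implicit function theorem, that $\theta_{\mu,\eta}^{-1}$ is a localized translation of the same form to which the same bounds apply.

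For (c), the core of the proposition, I would first establish the differentiability on a dense subclass via the chain rule identity
\[
\partial_{\mu_j}\bigl(\tm u\bigr)(x,y)=\chi_m(x)\varpi(y)\bigl(\tm\partial_{x_j}u\bigr)(x,y),
\]
and the analogous formula for $\partial_\eta$. Iterating $k$ times costs $k$ additional classical derivatives of $u$, while the translated argument $\theta_{\mu,\eta}(x,y)$ only contributes bounded factors. Hence for $u\in C^{n+k}(O,E)$ the $k$-th Gateaux derivative at $(\mu,\eta)$ lies pointwise in $C^n(O,E)$ with norms controlled uniformly in the parameter, and the embedding $BC^n\hookrightarrow \F^s$ (for the admissible ranges of $s$ listed in the hypothesis) delivers a value in $\F^s(U,E)$. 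Continuity of the resulting operator-valued map in $(\mu,\eta)$ follows by approximating $u$ in $\F^s(U,E)$ by smooth compactly supported functions and combining the uniform bound of (b) with the pointwise convergence already verified on the smooth subclass.

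The main obstacle is the case $k=\omega$, where the iterative argument above gives $C^\infty$ dependence but not real-analyticity of $(\mu,\eta)\mapsto\tme u$. For this step I would construct an explicit power series
\[
\tme u(x,y)=\sum_{\alpha,\beta}\frac{\mu^\alpha\eta^\beta}{\alpha!\,\beta!}\,(\chi_m\varpi)^{|\alpha|}(\chi_m\chi)^\beta\,\partial_x^\alpha\partial_y^\beta u(x,y),
\]
obtained by formally Taylor-expanding $u$ at $(x,y)$, and show that for $u\in C^\omega$ this series converges absolutely in $\F^s(U,E)$ on a ball $\B(0,r_0)$ whose radius is controlled by the radius of convergence of the analytic function $u$ on $\supp(\chi_m\varpi)\cup\supp(\chi_m\chi)$. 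This requires Cauchy-type factorial estimates on $\partial_x^\alpha\partial_y^\beta u$ together with the uniform boundedness of the cutoffs, followed by termwise application of the composition estimates used in (b); this step, while standard in spirit, is where the bookkeeping is delicate and is precisely the place where the assumption $u\in C^{n+k}$ must be strengthened to analyticity in the $k=\omega$ case.
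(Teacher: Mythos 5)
Your proposal follows essentially the same route as the paper: factor $\theta_{\mu,\eta}=\theta_\mu\circ\bar{\theta}_\eta$ as in \eqref{S3: comp of tme}--\eqref{S3: pull-back&push-forward}, reduce (a) and (b) to the one-parameter truncated translations of \cite{EscPruSim03}, prove (c) from the identity $\partial_{\mu}[\theta_\mu^\ast u]=\chi_m\varpi\,\theta_\mu^\ast\partial_x u$ (and its $\eta$-analogue) by induction, and settle $k=\omega$ via Cauchy-type factorial estimates, which is exactly what the paper's appeal to \cite[Proposition~3.2]{EscPruSim03} amounts to. The only cosmetic caveat is that your invocation of an embedding $BC^n\hookrightarrow\F^s$ and of density of compactly supported smooth functions should be replaced by the observation that all $(\mu,\eta)$-derivatives of order $\geq 1$ are supported in the fixed compact set carried by the cutoffs, where such arguments are legitimate.
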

\begin{proof}
(a) The statement is immediate from \eqref{S3: pull-back&push-forward} and \cite[Proposition~2.2]{EscPruSim03}.
\smallskip\\
(b) We infer from (a) that 
$$\tme\circ\tmei=\tmei \circ \tme =\id_{\F^s(U,E)}.$$ 
\cite[Proposition~2.4]{EscPruSim03} implies that for $r$ small enough
$$\theta_\mu^*, \bar{\theta}_\eta^*\in \Lis(\F^s(U,E)),\quad (\mu,\eta)\in\Br.$$
Moreover, there exists a constant $M>0$ such that
$$\|\theta_\mu^*\|_{\L(\F^s(U,E))} + \|\bar{\theta}_\eta^*\|_{\L(\F^s(U,E))} \leq M,\quad (\mu,\eta)\in\Br.$$
The assertion now follows from \eqref{S3: pull-back&push-forward}. 
\smallskip\\
(c)  It is a simple matter to see that the assertion holds for the transformation $\tm$ and $\te$, respectively, namely that we have
\begin{align}
\label{S3: C^k-1}
[\mu\mapsto \tm u]\in C^k(\Br,\F^s(U,E)),\hspace{1em} [\eta\mapsto \te u]\in C^k(\Br,\F^s(U,E))
\end{align}
with $\partial^{\alpha}_{\mu}[\tm u]=\chi_m^{|\alpha|}\varpi^{|\alpha|}[\tm\partial^{\alpha}_{x}u]$, and $\partial^{\beta}_{\eta}[\te u]=\chi_m^{\beta}\chi^\beta[\te\partial^{\beta}_{y}u]$. Then we can obtain from \eqref{S3: pull-back&push-forward}, \eqref{S3: C^k-1}, the point-wise multiplier theorem in \cite[Section~3.3.2]{Trie78}, and induction that the map
\begin{align*}
(\mu,\eta)\mapsto\partial^{(\alpha,\beta)}_{(\mu,\eta)}[\tme u]=\chi_m^{|\alpha|+\beta}\chi^\beta\varpi^{|\alpha|}[\tme\partial^{(\alpha,\beta)}_{(x,y)}u],\hspace*{1em} |\alpha|+\beta\leq k,
\end{align*}
is separately $C^{k-|\alpha|-\beta}$-continuous into the space $\F^s(U,E)$. So the case $k\in\Nz\cup\{\infty\}$ follows immediately from a well-known fact in multi-variable calculus.
When $k=\omega$, the proof is basically the same as that of \cite[Proposition~3.2]{EscPruSim03}.
\end{proof}
Let
$$
\mathcal{A}:=\sum\limits_{|\alpha|\leq l}a_\alpha \partial^\alpha,
$$ 
be an $l$-th order linear  differential operator on $U$
with coefficients $a_\alpha:U\rightarrow \L(E)$, where $\alpha\in\Nz^{m+1}$. We set $\tAme:=\tme\mathcal{A}\theta^{\mu,\eta}_\ast$.
\begin{theorem}
\label{S3: U-diff op}
Suppose that $a_\alpha\in C^{n+k}(O,\L(E))\cap BU\!C^n(U,\L(E))$. Then
\begin{align*}
[(\mu,\eta)\mapsto \tAme]\in C^k(\Br, \L(\F^{s+l}(U,E),\F^s(U,E))),
\end{align*}
where $\F\in\{bc,BU\!C,W_p,H_p\}$, and $s\in [0,n]$.
\end{theorem}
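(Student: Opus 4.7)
The plan is to write $\tAme$ explicitly as a linear differential operator of order at most $l$ whose coefficients depend $C^k$-smoothly on $(\mu,\eta)$ in a \emph{uniform} pointwise multiplier class, and then to invoke the multiplier theorem from \cite[Section~3.3.2]{Trie78} to obtain operator-norm regularity in $\L(\F^{s+l}(U,E),\F^s(U,E))$. First, applying Fa\`a di Bruno's formula to $\partial^\alpha(v\circ\theta_{\mu,\eta}^{-1})$ and then pulling back by $\tme$, one arrives at the representation
\[
\tAme=\sum_{|\beta|\le l} b_\beta^{\mu,\eta}\,\partial^\beta,
\]
where each coefficient $b_\beta^{\mu,\eta}$ is a finite sum of products of the factors $\tme a_\alpha=a_\alpha\circ\theta_{\mu,\eta}$ with universal polynomial expressions in the pulled-back derivatives $(\partial^\gamma\theta_{\mu,\eta}^{-1})\circ\theta_{\mu,\eta}$, $|\gamma|\le l$.

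Next I would verify that each $b_\beta^{\mu,\eta}$ depends $C^k$-smoothly on $(\mu,\eta)$ as an element of $BU\!C^n(U,\L(E))$, handling the two kinds of factors separately. The coefficient factors fall directly under Proposition~\ref{S3: U-continuity}(c) applied to each $a_\alpha$ with $\F=BU\!C$ and $s=n$, which is permitted precisely because of the standing hypothesis $a_\alpha\in C^{n+k}(O,\L(E))\cap BU\!C^n(U,\L(E))$. For the geometric factors I would use that $\theta_{\mu,\eta}$ is affine in $(\mu,\eta)$ with $BC^\infty(U)$-valued coefficients, so that $D\theta_{\mu,\eta}=I+D_0(\mu,\eta)$ with $D_0$ polynomial in $(\mu,\eta)$ into $BC^\infty(U,\R^{(m+1)\times(m+1)})$ and small for small $r_0$; a matrix-valued Neumann series then yields $(D\theta_{\mu,\eta})^{-1}=(D\theta_{\mu,\eta}^{-1})\circ\theta_{\mu,\eta}$ as a real analytic $BC^\infty(U)$-valued function of $(\mu,\eta)$, and higher orders $(\partial^\gamma\theta_{\mu,\eta}^{-1})\circ\theta_{\mu,\eta}$ are then universal polynomials in $(D\theta_{\mu,\eta})^{-1}$ and derivatives of $\theta_{\mu,\eta}$, hence also real analytic into $BC^\infty(U)$. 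Because $BU\!C^n$ is a Banach algebra under pointwise multiplication, the product structure of $b_\beta^{\mu,\eta}$ transfers this $C^k$-regularity (with real-analyticity absorbed).

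Finally I would close the argument by the pointwise multiplier theorem. For $\F\in\{bc,BU\!C,W_p,H_p\}$ and $s\in[0,n]$, multiplication by any element of $BU\!C^n(U,\L(E))$ is bounded on $\F^s(U,E)$ (cf.\ \cite[Section~3.3.2]{Trie78}), and $\partial^\beta\in\L(\F^{s+l}(U,E),\F^{s+l-|\beta|}(U,E))\hookrightarrow\L(\F^{s+l}(U,E),\F^s(U,E))$ for every $|\beta|\le l$. Composing, each $b_\beta^{\mu,\eta}\partial^\beta$ belongs to $\L(\F^{s+l}(U,E),\F^s(U,E))$ and inherits the $C^k$-dependence on $(\mu,\eta)$ of $b_\beta^{\mu,\eta}$; summing over $|\beta|\le l$ then delivers the theorem.

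The main obstacle I anticipate is upgrading the pointwise-in-$v$ smoothness furnished by Proposition~\ref{S3: U-continuity}(c) to genuine \emph{operator-norm} smoothness of $(\mu,\eta)\mapsto\tAme$, and it is this demand that both forces the explicit chain-rule expansion and pins down $BU\!C^n$ as the ambient multiplier class, in turn explaining the restrictions $s\le n$ and $a_\alpha\in BU\!C^n(U,\L(E))$. The real-analytic case $k=\omega$ escapes the naive $C^\infty$-argument and would be handled exactly as in Proposition~\ref{S3: U-continuity}(c), namely by a majorant/Taylor-expansion argument in $(\mu,\eta)$, which is tractable thanks to the polynomial (indeed linear) dependence of $\theta_{\mu,\eta}$ on $(\mu,\eta)$.
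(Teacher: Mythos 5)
Your proposal is correct and follows essentially the same route as the paper: the paper first treats the constant‑coefficient case by the chain‑rule/Neumann‑series argument of \cite[Proposition~4.1]{EscPruSim03} (which is exactly your expansion of $\tAme$ into $\sum_\beta b_\beta^{\mu,\eta}\partial^\beta$ with real‑analytic $BC^\infty$‑valued geometric factors), and then adds the variable coefficients via Proposition~\ref{S3: U-continuity}(c) applied with $\F=BU\!C$, $s=n$, together with the pointwise multiplier theorem of \cite[Section~3.3.2]{Trie78}. You have merely written out explicitly the steps the paper delegates to the cited reference.
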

\begin{proof}
By modifying the proof of \cite[Proposition~4.1]{EscPruSim03} in an obvious way, one may verify the case of constant coefficients. 
Now the assertion is a direct consequence of  point-wise multiplication results, and Proposition~\ref{S3: U-continuity}(c).
\end{proof}

\subsection{\bf Parameter-dependent diffeomorphisms on $\Omega$ and $\Omega_i$}

By means of $\theta_{\mu,\eta}$, we are now in a position to introduce a family of localized translations on $\Omega$ and $\Omega_i$, respectively, by
\begin{align*}
\Theta_{\mu,\eta}(z)=
\begin{cases}
\pset\circ\theta_{\mu,\eta}\circ\vpet(z),\hspace*{1em} &z\in\Oet,\\
z, &z\in \Omega\setminus\Oet.
\end{cases}
\end{align*}
Thanks to Proposition~\ref{S3: U-continuity}(a), $\Theta_{\mu,\eta}\in{\Dfi}(\Omega)\cap {\Dfi}(\Omega_i)$ is evident. 
\smallskip\\
There is a universal extension operator $\mathscr{E}_i\in \L(\F^s(\Omega_i,E), \F^s(\Omega,E))$.
The restriction operator from $\F^s(\Omega,E)$ to $\F^s(\Omega_i,E)$ is denoted by $\mathscr{R}_i$. Then $\mathscr{R}_i$ is a retraction from $\F^s(\Omega,E)$ onto $\F^s(\Omega_i,E)$ with $\mathscr{E}_i$ as a coretraction. When $u\in \F^s(\Omega\setminus\Sigma,E)$, it is understood that $\mathscr{E}_i u:= \mathscr{E}_i (u|_{\Omega_i})$.
\smallskip\\
For $u\in E^{\Omega}$, its pull-back and push-forward by $\Theta_{\mu,\eta}$ can be explicitly expressed as
$$
{\Tme}u={\kbet}{\tme}{\kfet}(\vge u)+(1_{\Omega}-{\vge})u, \quad
{\Tmei}u={\kbet}{\tmei}{\kfet}(\vge u)+(1_{\Omega}-{\vge})u.$$
Analogously, given  $u\in E^{\Omega_i}$ of $u$, we have
\begin{align}
\label{S3: Omega-Omega_i}
{\Tme}u=\mathscr{R}_i\circ{\Tme}\circ \mathscr{E}_i u, \quad {\Tmei}u=\mathscr{R}_i\circ{\Tmei}\circ \mathscr{E}_i u.
\end{align}
The construction of $\Theta_{\mu,\eta}$ implies that $\Tme(E^{\Omega_i})\subset E^{\Omega_i}$, and likewise for $\Tmei$. Suppose that $\F\in\{bc,BC, W_p,H_p\}$. We define 
\begin{align*}
\F^{s,\Omega}_\cp:=\{u\in \F^s(\Omega,E): \supp(u)\subset \pset(\bar{\B}_{5\varepsilon_0,17a/18})\},
\end{align*}
and
\begin{align*}
\F^{s}_\cp:=\{u\in \F^s(\R^{m+1},E): \supp(u)\subset \bar{\B}_{5\varepsilon_0,17a/18}\}.
\end{align*}
The analogue of \cite[Lemma~3.1]{ShaoPre} is at our disposal, i.e., it holds that
\begin{lem}
\label{S3: main lem}
$\kbet\in \Lis(\F^s_\cp, \F^{s,\Omega}_\cp)$ with $[\kbet]^{-1}=\kfet$.
\end{lem}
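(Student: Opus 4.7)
My plan is to use that $\vpet:\Oet\to\Qa$ is a real-analytic diffeomorphism with inverse $\pset$, both with $BC^\infty$-regularity and bounded derivatives on the bounded open sets in question, and that the support conditions built into $\F^s_\cp$ and $\F^{s,\Omega}_\cp$ correspond exactly under this diffeomorphism. Indeed, the choice of $\varepsilon_0$ at the start of Subsection~3.1 guarantees $\bar{\B}_{5\varepsilon_0,17a/18}\subset\Qa$, so $K:=\pset(\bar{\B}_{5\varepsilon_0,17a/18})$ is a compact subset of $\Oet$ and $\vpet(K)=\bar{\B}_{5\varepsilon_0,17a/18}$.

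First I would verify that $\kbet$ sends $\F^s_\cp$ into $\F^{s,\Omega}_\cp$. For $u\in\F^s_\cp$, the restriction $u|_{\Qa}$ lies in $\F^s(\Qa,E)$ with support in $\bar{\B}_{5\varepsilon_0,17a/18}$, so the pullback $u\circ\vpet$ is defined on $\Oet$ with support in $K\subset\pset(\bar{\B}_{5\varepsilon_0,17a/18})$. Extending by zero to $\Omega$ yields a candidate element of $\F^{s,\Omega}_\cp$. Its regularity follows from the standard invariance of $\F^s$-spaces under $BC^\infty$-diffeomorphisms between bounded open subsets of $\R^{m+1}$: for $\F\in\{bc,BC\}$ this is a direct chain-rule estimate, while for $\F\in\{W_p,H_p\}$ it is classical (e.g., along the lines of the multiplier and diffeomorphism results already cited in Section~2). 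An operator-norm bound follows from the uniform boundedness of all derivatives of $\vpet$ on a fixed neighborhood of $K$.

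An identical argument with $\pset$ in place of $\vpet$ shows $\kfet\in\L(\F^{s,\Omega}_\cp,\F^s_\cp)$. The identities $\vpet\circ\pset=\id_{\Qa}$ and $\pset\circ\vpet=\id_{\Oet}$ then yield $\kfet\kbet=\id$ on $\F^s_\cp$ and $\kbet\kfet=\id$ on $\F^{s,\Omega}_\cp$ (the relations hold pointwise where the functions are nonzero and trivially elsewhere), hence $[\kbet]^{-1}=\kfet$ and the claimed isomorphism follows.

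The main subtlety to watch is the extension-by-zero step, since for general $\F^s$-functions on an open set the zero-extension need not preserve regularity when $s>0$. This issue is circumvented here because the supports are \emph{compactly contained} in $\Qa$ and $\Oet$ by construction, so the extensions vanish on a full neighborhood of the relevant boundaries and no regularity is lost. This is precisely the role played by the nested choice of constants $\varepsilon_0<2\varepsilon_0<\cdots<5\varepsilon_0$ and $2a/3<\cdots<17a/18<a$ appearing in the cutoffs $\chi_m$, $\chi$, and $\vge$ introduced at the start of Subsection~3.1.
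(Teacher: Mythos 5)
Your argument is correct and follows essentially the same route as the paper, which simply invokes the proof of Lemma~3.1 of the cited reference on parameter-dependent diffeomorphisms (noting that its key estimate carries over with the Euclidean gradient and metric); that proof rests on exactly the ingredients you spell out, namely the $BC^\infty$-boundedness of $\vpet$ and $\pset$ on a neighborhood of the supports, the invariance of the $\F^s$-scales under such diffeomorphisms, and the fact that the compact containment of the supports makes the zero-extensions harmless. Your explicit attention to the extension-by-zero step is a welcome clarification but does not change the substance of the argument.
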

\begin{proof}
Since \cite[(3.2)]{ShaoPre} still holds with $\nabla$ and $g$ denoting the usual gradient and Euclidean metric in $\R^{m+1}$, respectively,
the proof is essentially the same as that of \cite[Lemma~3.1]{ShaoPre}.
\end{proof}
By means of Lemma~\ref{S3: main lem}, with only minor changes to the proofs in \cite[Section~3]{ShaoPre}, one can show that $\Theta_{\mu,\eta}$ inherits all the properties of its counterpart therein. 
\begin{prop}
\label{S3: Omega-continuity}
Suppose that $\F\in\{bc,BC, W_p,H_p\}$. Then 
\begin{align*}
\Tme\in \Lis(\F^s(\Omega,E))\cap \Lis(\F^s(\Omega_i,E)),\hspace*{1em}[\Tme]^{-1}=\Tmei,\hspace*{1em} (\mu,\eta)\in\Br.
\end{align*}
Moreover, there exists a constant $M>0$ such that
\begin{align*}
\|\Tme\|_{\L(\F^s(\Omega,E))} + \|\Tme\|_{\L(\F^s(\Omega_i,E))}\leq M,\hspace*{1em} (\mu,\eta)\in\Br.
\end{align*}
\end{prop}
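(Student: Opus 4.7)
The plan is to derive everything from the explicit chart-level formula
$\Tme u = \kbet\,\tme\,\kfet(\vge u) + (1_{\Omega}-\vge)u$
by combining Lemma~\ref{S3: main lem} (the topological isomorphism $\kbet:\F^s_\cp\xrightarrow{\sim}\F^{s,\Omega}_\cp$) with Proposition~\ref{S3: U-continuity}(b) (uniform boundedness of $\tme$ on $\F^s(U,E)$, with inverse $\tmei$). First I would check, for $r_0$ small enough, that $\tme$ sends $\F^s_\cp$ into $\F^s_\cp$: since $\theta_{\mu,\eta}$ equals the identity outside $\B^m(x_c,2\varepsilon_0)\times\B(y_c,2\varepsilon_0)$ and shifts by at most $r_0$ inside, one gets $\theta_{\mu,\eta}^{-1}(\bar{\B}_{5\varepsilon_0,17a/18})\subset \bar{\B}_{5\varepsilon_0,17a/18}$, so the support condition is preserved.

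Given this, for $u\in\F^s(\Omega,E)$ the multiplier $\vge\in BC^\infty$ ensures $\vge u\in \F^{s,\Omega}_\cp$, hence by Lemma~\ref{S3: main lem}, $\kfet(\vge u)\in\F^s_\cp$; applying $\tme$ keeps us in $\F^s_\cp$ by the previous step; and then $\kbet$ maps back into $\F^{s,\Omega}_\cp\subset\F^s(\Omega,E)$. The remaining summand $(1_{\Omega}-\vge)u$ is controlled by pointwise multiplication. Collecting the three norm estimates, and using the uniform bound $\|\tme\|_{\L(\F^s)}\leq M$ from Proposition~\ref{S3: U-continuity}(b), yields $\|\Tme\|_{\L(\F^s(\Omega,E))}\leq M'$ with $M'$ independent of $(\mu,\eta)\in\Br$.

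For the isomorphism property I would verify $\Tme\circ\Tmei=\Tmei\circ\Tme=\id$ by direct algebraic computation, exploiting (i) $\vge\equiv 1$ on the support of $\kbet[\tme\kfet(\vge u)]$ and $\kbet[\tmei\kfet(\vge u)]$, which is ensured by the chain of inclusions in the construction of the cut-offs $\varpi,\chi_m,\chi,\vge$; (ii) $\kfet\circ\kbet=\id$ on $\F^s_\cp$ from Lemma~\ref{S3: main lem}; and (iii) $\tme\circ\tmei=\id$ on $\F^s_\cp$ from Proposition~\ref{S3: U-continuity}(b). This is the same algebraic manipulation carried out in \cite[Section~3]{ShaoPre}, which the authors indicate goes through with only cosmetic modification.

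Finally, the transfer to $\F^s(\Omega_i,E)$ uses \eqref{S3: Omega-Omega_i}: since $\Theta_{\mu,\eta}$ maps each phase $\Omega_i$ into itself (the vertical cut-off $\chi$ is supported in $\B(y_c,2\varepsilon_0)\subset(-1)^i(0,a/3)$, so points with $y>0$ stay with $y>0$ and similarly for $y<0$), one has $\Tme|_{\F^s(\Omega_i,E)}=\mathscr{R}_i\circ\Tme\circ\mathscr{E}_i$, a composition of three bounded operators, whose inverse is the analogous composition involving $\Tmei$. The main obstacle is not analytic but bookkeeping: one has to track carefully the nested supports of $\chi_m$, $\chi$, $\varpi$, and $\vge$ to justify the identities $\vge\equiv 1$ at the right spots in step three and the phase preservation in step four; once these geometric facts are in hand, the operator-theoretic conclusions are routine.
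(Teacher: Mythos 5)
Your proposal is correct and follows essentially the same route as the paper: uniform bounds on the three factors $\kbet$, $\tme$, $\kfet$ (via Lemma~\ref{S3: main lem} and Proposition~\ref{S3: U-continuity}(b)) together with the pointwise multipliers $\vge$ and $1_\Omega-\vge$, the algebraic identity $\Tme\circ\Tmei=\id$, and the transfer to $\Omega_i$ through \eqref{S3: Omega-Omega_i}. The only cosmetic difference is that the paper closes by citing the open mapping theorem, whereas you obtain invertibility directly from the explicit uniformly bounded inverse; your extra care with the nested supports is consistent with what the paper leaves implicit.
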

\begin{proof}
The proof is almost the same as that of \cite[Proposition~3.3]{ShaoPre}. For the reader's convenience, we will present a proof  based on Lemma~\ref{S3: main lem} for the case of $\Omega_i$, and the other case follows in a similar manner. The identity 
$$\Tme\circ\Tmei=\Tmei\circ\Tme=\id_{\F^s(\Omega,E)}\cap\id_{\F^s(\Omega_i,E)}$$ 
is obvious from the definitions of $\Tme$ and $\Tmei$ and $\theta_{\mu,\eta}$. In the formula 
$$ {\Tme}u=\kbet{\tme}\kfet(\vge u)+(1_{\Omega}-{\vge})u ,$$
it can easily verified that the multiplication operators $\vge$ and $(1_{\Omega}-{\vge})$ are uniformly bounded, i.e.,
$$\|\vge\|_{\L(\F^s(\Omega,E))}+ \|(1_{\Omega}-{\vge})\|_{\L(\F^s(\Omega,E))}\leq C_1 $$
for some constant $C_1$. Lemma~\ref{S3: main lem} implies that for some constant $C_2$
$$\|\kbet\|_{\L(\F^{s}_{\cp}, \F^{s,\Omega}_\cp)}+\|\kfet\|_{\L(\F^{s,\Omega}_\cp, \F^{s}_{\cp})} \leq C_2.$$
It follows from Proposition~\ref{S3: U-continuity}(b) that there is a uniform bound $C_3$ such that
$$\|\tme\|_{\L(\F^{s}_{\cp})}\leq C_3. $$
The statement then is a direct consequence of the open mapping theorem and \eqref{S3: Omega-Omega_i}.
\end{proof}

\subsection{\bf Time-dependence}
Let $J=[0,T]$, $T>0$. Assume that $I\subset\mathring J$ is an open interval and $t_c\in I$ is a fixed point. Choose $\varepsilon_{0}$ so small that $\mathbb{B}(t_c,3\varepsilon_0)\subset\subset I$. Pick an auxiliary function 
\begin{align*}
\xi\in\mathcal{D}(\mathbb{B}(t_c,2\varepsilon_{0}),[0,1])\hspace{1em}\text{ with }\hspace{.5em} \xi|_{\mathbb{B}(t_c,\varepsilon_{0})}\equiv{1}. 
\end{align*}
The localized temporal translation is defined by
\begin{align*}
\varrho_{\lambda}(t):=t+\xi(t)\lambda,\hspace{1em} t\in J \text{ and } \lambda\in{\R}.
\end{align*}
For $v:J\times\Qa\rightarrow E$, the parameter-dependent diffeomorphism can be expressed as
\begin{align}
\label{S3: Vlme}
\tilde{v}_{\lambda,\mu,\eta}(t,\cdot):=\tlme v(t,\cdot)=\tTue(t)\rl v(t,\cdot), \hspace{1em} (\lambda,\mu,\eta)\in\Br,
\end{align}
where $\tTue(t):=\theta^{\ast}_{\xi(t)\mu,\xi(t)\eta}$ for $t\in J$.
\smallskip\\
Given $v:J\times\Q\rightarrow E$, analogously, setting $\tTu(t):=\theta^{\ast}_{\xi(t)\mu}$, we can define
\begin{align}
\label{S3: Vlm}
\tilde{v}_{\lambda,\mu}(t,\cdot):=\tlm v(t,\cdot)=\tTu(t)\rl v(t,\cdot), \hspace{1em} (\lambda,\mu)\in\Br.
\end{align}
It is understood that $\theta_\mu$ in \eqref{S3: Vlm} is restricted on the hyperplane $y=0$, appearing as a special case of \eqref{S3: sep translations}.
\smallskip\\
For any $u\in J\times\Omega\rightarrow E$, or $u\in J\times\Omega_i\rightarrow E$, we define
$$u_{\lambda,\mu,\eta}(t,\cdot):=\Tlme u(t,\cdot):=\Tue(t)\rl u(t,\cdot), \hspace*{1em} (\lambda,\mu,\eta)\in\Br.$$
Here $\Tue(t):=\Theta^*_{\xi(t)\mu,\xi(t)\eta}$ for $t\in J$. By Proposition~\ref{S3: Omega-continuity}, for every $t$, $\Tue(t)$ is invertible. 
\smallskip\\
Let $\Tlmei:=[\Tlme]^{-1}$. Note that $u_{\lambda,\mu,\eta}(0,\cdot)=u(0,\cdot)$.

The following lemma strengthens \cite[Lemma~5.1]{EscPruSim03}.
\begin{lem}
\label{S3: involve time}
Suppose that $[(\mu,\eta)\mapsto f(\mu,\eta)]\in C^k+j(\Br,X)$ for $j=0,1$ and some Banach space $X$. Let $F(\mu,\eta)(t):=f(\xi(t)(\mu,\eta))$ with $t\in J$. Then
\begin{align*}
[(\mu,\eta)\mapsto F(\mu,\eta)]\in C^k(\Br,C^j(J,X)).
\end{align*}
\end{lem}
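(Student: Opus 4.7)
The plan is to factor $F$ as a composition $F = N_f \circ L$ of a bounded linear map $L$ and a superposition (Nemytskii-type) operator $N_f$, and then apply the chain rule. Introduce
$$L: \Br \to C^j(J, \Br), \qquad L(\mu,\eta)(t) := \xi(t)(\mu,\eta),$$
which is bounded and linear --- shrinking $r_0$ if necessary so that the image lies in $\Br$ --- hence, being linear between Banach spaces, belongs to $C^\infty$. Together with the superposition operator
$$N_f: C^j(J, \Br) \to C^j(J, X), \qquad N_f(u)(t) := f(u(t)),$$
we have $F = N_f \circ L$, so the conclusion follows from the chain rule once $N_f$ is shown to be $C^k$-smooth.

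The core of the argument is therefore to establish that \emph{if $f \in C^{k+j}(\Br, X)$ then $N_f \in C^k(C^j(J, \Br), C^j(J, X))$}, with derivatives given pointwise in $t$ by
$$D^n N_f(u)[v_1,\ldots,v_n](t) = D^n f(u(t))[v_1(t),\ldots,v_n(t)], \qquad n \leq k.$$
For $j = 0$ this is a classical fact about superposition operators on spaces of continuous functions: since $u(J)$ is compact in $\Br$, the uniform continuity of $D^n f$ on precompact subsets of $\Br$ together with standard Taylor-remainder estimates yields both continuity of the derivatives in $u \in C(J, \Br)$ and the differentiability of $N_f$ up to order $k$.

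For $j = 1$, the shifted regularity requirement $C^{k+1}$ is exactly what is needed to differentiate $D^n N_f(u)[v_1,\ldots,v_n]$ in $t$ up to $n = k$, since
$$\partial_t\bigl(D^n N_f(u)[v_1,\ldots,v_n]\bigr)(t) = D^{n+1} f(u(t))\bigl[u'(t), v_1(t),\ldots,v_n(t)\bigr] + \sum_{i=1}^n D^n f(u(t))\bigl[v_1(t),\ldots,v_i'(t),\ldots,v_n(t)\bigr],$$
which lies in $C(J, X)$ because $D^{n+1} f$ is continuous on $\Br$ for $n \leq k$ and $u, v_i \in C^1(J, \Br)$. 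I expect the main obstacle to be the bookkeeping of these mixed $(\mu,\eta)$- and $t$-derivatives --- in particular verifying that the $n$-th Fréchet derivative $D^n N_f(u)$ depends continuously on $u$ in the $C^1(J, X)$ norm --- but this reduces to uniform-continuity estimates for $D^n f$ and $D^{n+1} f$ on precompact subsets of $\Br$. Combining the smoothness of $N_f$ with the chain rule applied to $F = N_f \circ L$ then yields $F \in C^k(\Br, C^j(J, X))$.
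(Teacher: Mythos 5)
Your factorization $F=N_f\circ L$, with $L$ the bounded linear map $\upsilon\mapsto[t\mapsto\xi(t)\upsilon]$ and $N_f$ the superposition operator on $C^j(J,\Br)$, is a genuinely different (and tidier) organization than the paper's, which works directly with the composite $F$ and never isolates a Nemytskii operator. For $k\in\Nz\cup\{\infty\}$ your argument is essentially complete: the pointwise formula for $D^nN_f$, uniform continuity of $D^nf$ and $D^{n+1}f$ on a compact neighborhood of the range of $u$, and your product-rule identity for $\partial_t\bigl(D^nN_f(u)[v_1,\dots,v_n]\bigr)$ do yield $N_f\in C^k(C^j(J,\Br),C^j(J,X))$, and the chain rule finishes the proof. (The paper disposes of these cases in one sentence, citing the $j=0$ case to Escher--Pr\"uss--Simonett and saying the rest ``follow in a similar way.'')

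The gap is the case $k=\omega$, which is the only case the paper actually writes out and the one its main theorems need. Real analyticity of $N_f$ between the Banach spaces $C^j(J,\Br)$ and $C^j(J,X)$ does not follow from ``differentiability up to order $k$ plus uniform continuity of the derivatives''; one must exhibit a locally convergent power series, i.e.\ verify Cauchy estimates $\|D^nN_f(u)\|\le M\,n!/R^n$ in the relevant operator norms. That is precisely the content of the paper's proof: it expands $f$ at $\upsilon_0$ with coefficients obeying $\|\partial^\beta f\|\le M\beta!/R^{|\beta|}$ uniformly on a compact set, notes that $F(\upsilon)(t)$ is represented by the series with coefficients $\xi^{|\beta|}(t)\,\partial^\beta f(\xi(t)\upsilon_0)/\beta!$, and then checks --- this is the real work for $j=1$ --- that differentiating these coefficients in $t$ produces terms still bounded by $M'\beta!/(R/C)^{|\beta|}$ for a fixed $C$, so the series converges in $C^1(J,X)$ on a smaller ball. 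Your proposal would need the analogous Cauchy bound for $D^nN_f(u)$ as a multilinear map on $C^1(J,\R^{m+1})$ (the extra $t$-derivative costs one factor of $n$, absorbed into a smaller radius); without it, the statement is only established for $k\in\Nz\cup\{\infty\}$.
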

\begin{proof}
To economize the notations, we let $\upsilon:=(\mu,\eta)$. The case $j=0$ is proved in \cite[Lemma~5.1]{EscPruSim03}. We will only treat the case $k=\omega$ and $j=1$. The remaining cases follow in a similar way.
\smallskip\\
Given $\upsilon_0\in \Br$, for every $\upsilon\in \B(\upsilon_0,r)$ with $r$ small enough
$$f(\upsilon)=\sum\limits_\beta \frac{1}{\beta !} \partial^\beta f(\upsilon_0) (\upsilon-\upsilon_0)^\beta,\quad \beta\in\Nz^{m+1},
$$
and there exist constants $M,R$ such that 
$$\|\partial^\beta  f(\upsilon)\| \leq M \frac{\beta !}{R^{|\beta|}},\hspace*{1em} \upsilon\in\B(\upsilon_0,r),\quad  \beta\in \Nz^{m+1}.$$
The constants $r,M,R$ depend continuously on $\upsilon_0$. 
Thus we can find uniform constants $r,M,R$ for $\bar{\B}^{m+1}(0,|\upsilon_0|)$. For each $t\in J$ and $\upsilon\in\B(\upsilon_0,r)$, the following series
\begin{align*}
\sum\limits_\beta \frac{\xi^{|\beta|}(t)}{\beta !} \partial^\beta  f(\xi(t)\upsilon_0) (\upsilon-\upsilon_0)^\beta
\end{align*}
converges in $X$ and represents $F(\upsilon)(t)$. The temporal derivatives can be computed as follows. For $\beta\neq 0$,
\begin{align*}
&\quad \|\frac{d}{dt}[\xi^{|\beta|}(t)\partial^\beta  f(\xi(t)\upsilon_0)]\|_{C(J,X)}\\
& \leq \| \xi^\prime(t) |\beta|\{\xi^{|\beta|-1}(t)\partial^\beta  f(\xi(t)\upsilon_0) +\xi^{|\beta|}(t)\sum\limits_{j=1}^{m+1} \partial^{\beta+e_j} f(\xi(t)\upsilon_0)(\upsilon_0)_j \} \|_{C(J,X)}\\
&\leq M\|\xi \|_{1,\infty} (|\beta|\frac{\beta!}{R^{|\beta|}} + r_0  \sum\limits_{j=1}^{m+1} \frac{(\beta+e_j)!}{R^{|\beta|+1}})\leq M^\prime \frac{\beta !}{(R/C)^{|\beta|}}
\end{align*}
for some sufficiently large $C$. Now the assertion follows right away.
\end{proof}

Suppose that $\Oe \subset\R^{m+1}$ is an open subset containing $\pset(\bar{\B}_{3\varepsilon_0,7a/9})$. Put $\Oei:=\Oe\cap \Omega_i$.
\smallskip\\
When $k\in\Nz\cup\{\infty\}$, we say $u\in C^k(\Oei,E)\cap\F^s(\Omega_i,E)$ if $\mathscr{E}_i u\in C^{k}(\Oe,E)\cap\F^s(\Omega,E)$. 
By convention,  $u\in C^\omega(\Oei,E)\cap\F^s(\Omega_i,E)$ means that $u$ has an analytic extension $u^e\in C^{\omega}(\Oe,E)\cap\F^s(\Omega,E)$. 
\begin{prop}
\label{S3: Omega-Tue}
Suppose that $(S,V)\in\{(\Oe,\Omega), (\Oei,\Omega_i) \}$. 
\begin{itemize}
\item[(a)] Suppose that $u\in C^{n+k+j}(S,E)\cap{\F}^{s}(V,E)$ for $j=0,1$, where either $s\in [0,n]$ if $\F \in\{bc,W_p,H_p\}$, or $s=n$ if $\F=BC$. Here $k\in \Nz\cup\{\infty\}$ if $(S,V)= (\Oei,\Omega_i)$. Then we have
$$[(\mu,\eta)\mapsto \Tue u]\in C^k(\Br,C^j(J;\F^s(V,E))).$$
\item[(b)] Suppose that $\mathcal{A}=\sum\limits_{|\alpha|\leq l} a_\alpha\partial^\alpha$ with $a_\alpha\in C^{n+k+j}(S,\L(E))\cap BC^n(V,\L(E))$  for $j=0,1$. Then 
$$[(\mu,\eta)\mapsto \Tue\mathcal{A}\Tuei]\in C^k(\Br,C^j(J;\mathcal{L}(\F^{s+l}(V,E),\F^{s}(V,E)))),$$
for $s\in[0,n]$ if ${\F}\in\{BC,W_{p},H_{p}\}$, or $s\in[0,n)$ if $\F=bc$.
\end{itemize}
\end{prop}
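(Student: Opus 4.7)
The plan is to reduce both statements to their Euclidean counterparts (Proposition~\ref{S3: U-continuity} and Theorem~\ref{S3: U-diff op}) by unwinding the explicit localization formula for $\Theta^*_{\mu,\eta}$ from Section~3.2, and then to insert the time parameter via Lemma~\ref{S3: involve time}.

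\textbf{Step 1: lift the $(\mu,\eta)$-dependence to $\Omega$ and $\Omega_i$.}
For admissible $u$ the identity
\begin{equation*}
\Tme u=\kbet\tme\kfet(\vge u)+(1_{\Omega}-\vge)u
\end{equation*}
concentrates the entire $(\mu,\eta)$-dependence in the single central factor $\tme$; the complement $(1_\Omega-\vge)u$ is independent of $(\mu,\eta)$ and lies in $\F^s(\Omega,E)$, while $\kbet$ and $\kfet$ are fixed pull-back/push-forward operators whose continuity properties are controlled by Lemma~\ref{S3: main lem}. Under the hypothesis $u\in C^{n+k+j}(S,E)\cap\F^s(V,E)$ with $S\supset\pset(\bar{\B}_{3\varepsilon_0,7a/9})$, the auxiliary function $\kfet(\vge u)$ is an element of $C^{n+k+j}(O,E)\cap\F^s_\cp$ on a neighbourhood $O$ of $\bar{\B}_{3\varepsilon_0,7a/9}$, so Proposition~\ref{S3: U-continuity}(c) gives
\begin{equation*}
[(\mu,\eta)\mapsto \tme\kfet(\vge u)]\in C^{k+j}(\Br,\F^s_\cp).
\end{equation*}
Post-composition with the fixed bounded operator $\kbet$ (Lemma~\ref{S3: main lem}) and addition of the $(\mu,\eta)$-independent term yield
\begin{equation*}
[(\mu,\eta)\mapsto \Tme u]\in C^{k+j}(\Br,\F^s(V,E)),\qquad V\in\{\Omega,\Omega_i\},
\end{equation*}
where the $V=\Omega_i$ case is handled by applying $\mathscr{R}_i\circ(\cdot)\circ\mathscr{E}_i$ and invoking \eqref{S3: Omega-Omega_i}.

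\textbf{Step 2: thread in the temporal cut-off $\xi(t)$.}
Put $f(\mu,\eta):=\Theta^*_{\mu,\eta}u$, so that Step~1 reads $f\in C^{k+j}(\Br,\F^s(V,E))$, and observe that by construction $\Tue(t)u=f(\xi(t)\mu,\xi(t)\eta)$. An application of Lemma~\ref{S3: involve time} with $X:=\F^s(V,E)$ then gives
\begin{equation*}
[(\mu,\eta)\mapsto \Tue u]\in C^k(\Br,C^j(J;\F^s(V,E))),
\end{equation*}
which is (a). The loss of $j$ derivatives in going from $C^{k+j}(\Br,X)$ to $C^k(\Br,C^j(J,X))$ is exactly what Lemma~\ref{S3: involve time} is designed to absorb, and this is the reason for requiring $C^{n+k+j}$ regularity of $u$ in the hypothesis.

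\textbf{Step 3: conjugation of the differential operator.}
For (b), the identity $\Tue(t)\mathcal{A}\Tuei(t)=\Theta^*_{\xi(t)\mu,\xi(t)\eta}\mathcal{A}\Theta^{\xi(t)\mu,\xi(t)\eta}_{\ast}$ reduces the problem to verifying
\begin{equation*}
g(\mu,\eta):=\Tme\mathcal{A}\Tmei\in C^{k+j}\bigl(\Br,\L(\F^{s+l}(V,E),\F^s(V,E))\bigr),
\end{equation*}
after which Lemma~\ref{S3: involve time} applied with $X:=\L(\F^{s+l}(V,E),\F^s(V,E))$ closes (b). The regularity of $g$ is obtained exactly as in Step~1: inserting the decomposition of $\Theta^*_{\mu,\eta}$ and pushing the cut-off factors past $\mathcal{A}$ using the pointwise multiplier theorem (and collecting commutator terms of order $<l$) reduces the task to the conjugation of the chart representation of $\mathcal{A}$ by $\tme$ on $U$, whose joint continuity in $(\mu,\eta)$ is the content of Theorem~\ref{S3: U-diff op} under the coefficient hypothesis $a_\alpha\in C^{n+k+j}(S,\L(E))\cap BC^n(V,\L(E))$.

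\textbf{Expected main difficulty.}
The most delicate bookkeeping is in Step~1 for $V=\Omega_i$: the target space is $\F^s(\Omega_i,E)$ rather than $\F^s(\Omega,E)$, and one must exploit that $\Theta_{\mu,\eta}$ preserves each phase $\Omega_i$ (so that $\mathscr{R}_i\circ\Tme\circ\mathscr{E}_i$ faithfully realizes $\Tme$ on $E^{\Omega_i}$, by \eqref{S3: Omega-Omega_i}) together with the universal extension $\mathscr{E}_i$ providing the required regularity transfer across $\Sigma$. A secondary subtlety, only visible when $k=\omega$, is the uniformity in $t\in J$ of the analyticity radii; this is precisely what Lemma~\ref{S3: involve time} delivers, its proof relying on the boundedness of the derivatives of $\xi$.
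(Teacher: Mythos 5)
Your proposal is correct and follows essentially the same route as the paper: both reduce the $(\mu,\eta)$-dependence to the Euclidean model results (Proposition~\ref{S3: U-continuity}(c) for (a) and Theorem~\ref{S3: U-diff op} for (b)) via the localization formula $\Tme u=\kbet\tme\kfet(\vge u)+(1_{\Omega}-\vge)u$ together with the extension--restriction pair $\mathscr{E}_i,\mathscr{R}_i$ for the phase domains, and then insert the temporal cut-off through Lemma~\ref{S3: involve time}. Your explicit remark that one needs $C^{k+j}$ smoothness in $(\mu,\eta)$ of the time-independent map before applying Lemma~\ref{S3: involve time} (accounting for the $C^{n+k+j}$ hypothesis) makes precise a step the paper leaves implicit.
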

\begin{proof}
(a) Modifying the proof of \cite[Theorem~3.4]{ShaoPre} as in that of Proposition~\ref{S3: Omega-continuity}, one can show by means of Proposition~\ref{S3: U-continuity}(c) that
$$[(\mu,\eta)\mapsto \Tme u]\in C^k(\Br,\F^s(V,E)). $$
Set $X=\F^s(V,E)$, $f(\mu,\eta)=\Tme u$. Now (a) is a direct consequence of Lemma~\ref{S3: involve time}.
\smallskip\\
(b) When $(S,V)=(\Oei,\Omega_i)$, let $\mathcal{A}_i:=\sum\limits_{|\alpha|\leq l}( a_\alpha^e)\partial^\alpha$ on $\Omega$. Then 
$$\Ame= \mathscr{R}_i \circ \Tme\mathcal{A}_i \Theta^{\mu,\eta}_\ast \circ \mathscr{E}_i.$$ 
The atlas $\Ae$ is real analytically compatible with the Euclidean structure. Similar to the proof of \cite[Proposition~3.6]{ShaoPre}, by Theorem~\ref{S3: U-diff op}, we can show that
$$[(\mu,\eta)\mapsto \Ame:=\Tme\mathcal{A}\Tmei]\in C^k(\Br, \L(\F^{s+l}(V,E),\F^s(V,E))).$$
Set $X=\L(\F^{s+l}(V,E),\F^s(V,E))$, and $f(\mu,\eta)=\Ame$. The assertion follows by Lemma~\ref{S3: involve time}.
\end{proof}
\begin{remark}
\label{S3: rl-differentiability}
Given any Banach space $X$, suppose that $u\in C^{n+k}(I,X)\cap{\F}^s(J,X)$, where either $s\in [0,n]$ if $\F \in\{bc,W_p,H_p\}$, or $s=n$ if $\F=BC$. 
Following the proofs in \cite[Section~3]{EscPruSim03}, we can verify that 
$$[\lambda\mapsto \rl u]\in C^k(\Br,\F^s(J,X)). 
$$
\qed 
\end{remark}

The localized translation $\Theta_{\mu,\eta}$, being restricted to $\M$, induces a family of diffeomorphism $\{\Tm: \mu\in\Br\}$ on function spaces over $\M$, which was introduced in \cite{ShaoPre}. For the reader's convenience, we will briefly state some of its properties herein. 
\begin{align*}
\Theta_\mu(q)=
\begin{cases}
\pst(\theta_\mu(\vpt(q))), \hspace{1em}&q\in \Ot,\\
q, &q\in \M\setminus \Ot.
\end{cases}
\end{align*}
It is evident that $\Theta_\mu \in{\Dfi}(\M)$ for $\mu\in\Br$ with sufficiently small $r_0$. Given $u \in E^\M$, $\Tm$ and $\Tmi$ can be explicitly expressed as  
\begin{align*}
\Tm u=\kbt \tm \kft(\varsigma u)+(1_\M-{\varsigma})u, \hspace{1em}\Tmi u=\kbt \tmi \kft(\varsigma u)+(1_\M-{\varsigma})u,
\end{align*}
respectively. Given $u\in E^{J\times\M}$, setting $\Tu(t):=\Theta^*_{\xi(t)\mu}$, we define
\begin{align*}
u_{\lambda,\mu}(t,\cdot):=\Tlm u(t,\cdot)=\Tu(t)\rl u(t,\cdot),\hspace{1em} (\lambda,\mu)\in\Br.
\end{align*}
Let $\Tlmi:=[\Tlm]^{-1}$. 
\smallskip\\
A linear operator $\mathcal{A}:C^\infty(\M,E)\rightarrow E^\M$ is called a linear differential operator of order $l$ on $\M$ if in every local chart $(\Ok,\vpk)$ with $\kappa\in\K$, there exists some linear differential operator defined on $\Q$
\begin{align*}
\mathcal{A}_\kappa(x,\partial):=\sum\limits_{|\alpha|\leq l}a^\kappa_\alpha(x)\partial^\alpha,\hspace*{.5em}\text{ with }a^\kappa_\alpha\in \L(E)^{\Q},
\end{align*}
called the local representation of $\mathcal{A}$ in $(\Ok,\vpk)$, such that for any $u\in C^\infty(\M,E)$
\begin{align}
\label{S3:local exp}
\kf(\mathcal{A}u)=\mathcal{A}_{\kappa}(\kf u), \hspace*{1em}\kappa\in\K.
\end{align}
\begin{prop}
\label{S3：M-reg}
Let $\O:=\Oe\cap\M$.
\begin{itemize}
\item[(a)] Suppose that $u\in C^{n+k+j}(\M,E)\cap \F^s(\M,E)$  for $j=0,1$, where either $s\in[0,n]$ if $\F\in\{bc,W_p,H_p\}$, or $s=n$ if $\F=BC$. Then we have
$$[\mu\mapsto\Tu u]\in C^k(\Br,C^j(J;\F^s(\M,E))).$$
\item[(b)] 
Suppose that $\mathcal{A}$ is a linear differential operator on $\M$ of order $l$ satisfying that for all $|\alpha|\leq l$ and $\kappa\in\K$, $a_\alpha^\kappa \in BC^n(\Q,\L(E))$ and $a_\alpha^{\kappa_c} \in C^{n+k+j}(\vpt(\O),\L(E))$  for $j=0,1$. Then for $s\in [0,n]$ if $\F=BC$, or $s\in[0,n)$ if $\F=bc$, or $s\in (-\infty,n]$ if $\F\in\{W_p,H_p\}$ 
$$[\mu \mapsto \Tu\mathcal{A}\Tui]\in C^k(\Br,C^j(J;\mathcal{L}(\F^{s+l}(\M,E),\F^s(\M,E)))).$$
\end{itemize}
\end{prop}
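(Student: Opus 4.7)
The strategy is to reproduce on $\M$ the three-step recipe applied to $\Omega$ and $\Omega_i$ in Propositions~\ref{S3: Omega-continuity} and \ref{S3: Omega-Tue}: first, reduce the action of $\Tm$ to the compactly-supported Euclidean translation $\tm$ on $\R^m$ via the chart isomorphisms $\kbt$ and $\kft$; second, apply the Euclidean results (Proposition~\ref{S3: U-continuity}(c) for part~(a), Theorem~\ref{S3: U-diff op} for part~(b)) to obtain $C^k$-dependence in $\mu$; third, promote to the time-dependent statement via Lemma~\ref{S3: involve time} with $f(\mu)$ the already-obtained map and $F(\mu)(t)=f(\xi(t)\mu)$.

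For~(a), start from the explicit formula
\[
\Tm u \;=\; \kbt\,\tm\,\kft(\varsigma u)\;+\;(1_\M-\varsigma)\,u,
\]
whose second summand is $\mu$-independent. For the first, the manifold analogue of Lemma~\ref{S3: main lem}, namely \cite[Lemma~3.1]{ShaoPre}, shows that $\kbt$ and $\kft$ are mutually inverse topological isomorphisms between the appropriate compactly-supported subspaces of $\F^s(\M,E)$ and $\F^s(\R^m,E)$. Proposition~\ref{S3: U-continuity}(c), applied in the special case $y=0$ of \eqref{S3: sep translations}, yields $[\mu\mapsto\tm(\kft(\varsigma u))]\in C^k(\Br,\F^s(\R^m,E))$; composing with the bounded, $\mu$-independent operator $\kbt$ produces $[\mu\mapsto\Tm u]\in C^k(\Br,\F^s(\M,E))$. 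Lemma~\ref{S3: involve time} then lifts this to $[\mu\mapsto\Tu u]\in C^k(\Br,C^j(J;\F^s(\M,E)))$; the $C^{n+k+j}$-regularity demanded in the hypothesis accounts for the additional spatial derivative consumed when $j=1$.

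For~(b), $\Theta_\mu$ is the identity outside $\Ot$, so $\Tm\mathcal{A}\Tmi$ coincides with $\mathcal{A}$ on $\M\setminus\Ot$ and all $\mu$-dependence is concentrated in the single chart $(\Ot,\vpt)$, where $\mathcal{A}$ has the local representation $\mathcal{A}_{\kappa_c}=\sum_{|\alpha|\leq l}a_\alpha^{\kappa_c}\partial^\alpha$. Using \eqref{S3:local exp} and the support of $\varsigma$, one verifies on the compactly-supported subspace
\[
\kft\circ\Tm\mathcal{A}\Tmi\circ\kbt \;=\; \tm\,\mathcal{A}_{\kappa_c}\,\tmi\;+\;R,
\]
where $R$ is a $\mu$-independent lower-order operator arising from commutators with the smooth cutoffs $\varsigma$ and $1_\M-\varsigma$. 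Theorem~\ref{S3: U-diff op} applied to $\tm\mathcal{A}_{\kappa_c}\tmi$ yields a $C^k$-map into $\L(\F^{s+l}(\R^m,E),\F^s(\R^m,E))$ for the stated range of $s$; combining with the isomorphism property of $\kbt,\kft$ and the boundedness of $R$ gives $[\mu\mapsto\Tm\mathcal{A}\Tmi]\in C^k(\Br,\L(\F^{s+l}(\M,E),\F^s(\M,E)))$. A final application of Lemma~\ref{S3: involve time} with $X=\L(\F^{s+l}(\M,E),\F^s(\M,E))$ completes the proof.

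The principal obstacle is the commutator bookkeeping in~(b): one must certify that the terms produced by $[\mathcal{A},\varsigma]$ and by the cutoff-induced restriction to the central chart really do combine into a bounded $\mu$-independent operator $R:\F^{s+l}(\M,E)\to\F^s(\M,E)$ that respects the admissible range of $s$ (in particular the exclusion $s=n$ on the $bc$-scale inherited from Theorem~\ref{S3: U-diff op}). The $BC^\infty$-compatibility of the atlas $\Ae$ guarantees that no regularity is lost across the chart transitions used in this reduction.
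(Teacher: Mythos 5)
For $s\geq 0$ your outline matches the route the paper actually takes: it simply defers parts (a) and the nonnegative-order case of (b) to \cite[Section~3]{ShaoPre}, and your chart-reduction plus Theorem~\ref{S3: U-diff op} plus Lemma~\ref{S3: involve time} is exactly that argument. The localization bookkeeping you flag (the operator $R$ and the cutoffs) is indeed the routine part.

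The genuine gap is that part (b) asserts the result for \emph{all} $s\in(-\infty,n]$ when $\F\in\{W_p,H_p\}$, and your proof does not reach $s<0$. Theorem~\ref{S3: U-diff op}, on which your argument rests, is only stated (and only proved) for $s\in[0,n]$, so the claim that it ``yields a $C^k$-map \dots for the stated range of $s$'' fails precisely on the negative-order range — which is the only case the paper writes out in detail. The paper handles it in three steps you would need to supply: (i) for $s\leq -1$ it transposes $\tm\partial_j\tmi$ onto the dual pairing $\F^s_p=(\F^{-s}_{p'})'$, computes the adjoint explicitly (using $\tm(\det D(\theta_\mu)^{-1})\det D\theta_\mu=1$), and invokes \cite[Proposition~1]{Browd62} to upgrade scalar analyticity of $\mu\mapsto\langle\tm\partial_j\tmi u,v\rangle$ to operator-valued analyticity; (ii) for the fractional gap $s\in(-1,0)$ it interpolates between the already-settled orders $s+1$ and $s-1$, carrying uniform Cauchy estimates through the interpolation to preserve real analyticity; (iii) it separately extends the chart isomorphism $\kbt\in\Lis(\F^{r,\R^m}_{\cp},\F^{r,\M}_{\cp})$ to negative $r$ by another duality estimate, since \cite[Lemma~3.1]{ShaoPre} as stated only covers nonnegative order. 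Without these ingredients your reduction to the Euclidean model operator has nothing to land on when $s<0$.
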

\begin{proof}
The proofs for (a) and the case $s\geq 0$ in (b) are given in \cite[Section~3]{ShaoPre}. We will only treat the case $k=\omega$ when $s<0$, the other cases follow similarly. Firstly, we show that for any $s\in (-\infty,0)$ and $\F_p\in \{W_p,H_p\}$
\begin{align*}
[\mu \mapsto \tm \partial_j \tmi]\in C^\omega(\Br, \L(\F^{s+1}_p(\R^m,E),\F^s_p(\R^m,E))).
\end{align*}
(i) $s\leq -1$. 
On account that with $p^\prime$ denoting the H\"older duality of $p$, 
\begin{align*}
\partial_j\in \L(\F^{s+1}_p(\R^m,E),\F^s_p(\R^m,E)), \hspace{.5em}\text{ and }\hspace{.5em}\F^s_p(\R^m,E)=(\F^{-s}_{p^\prime}(\R^m,E))^\prime ,
\end{align*}
for every $u\in \F^{s+1}_p(\R^m,E)$ and $v\in \F^{-s}_{p^\prime}(\R^m,E)$, we have
\begin{align*}
&\quad\langle \tm \partial_j \tmi u, v \rangle:=- \langle u, \tm\partial_j[\tmi v |\det(D(\theta_\mu)^{-1})|]|\det(D\theta_\mu)|\rangle\\
&=-\langle u, \tm\partial_j\tmi v \tm(\det(D(\theta_\mu)^{-1}))\det(D\theta_\mu) \rangle
 -\langle u, v \tm\partial_j [\det(D(\theta_\mu)^{-1})] \det(D\theta_\mu)\rangle.
\end{align*}
Here $\langle \cdot,\cdot \rangle$ denotes the duality pairing from $\F^s_p(\R^m,E)\times\F^{-s}_{p^\prime}(\R^m,E)$ to $\R$. 
By \cite[Proposition~4.1]{EscPruSim03}, we have
\begin{align*}
[\mu\mapsto \tm\partial_j\tmi]\in C^\omega (\Br, \L(\F^{-s}_{p^\prime}(\R^m,E),\F^{-s-1}_{p^\prime}(\R^m,E))).
\end{align*}
It is a simple matter to check that $\tm(\det(D(\theta_\mu)^{-1}))\det(D\theta_\mu)=1$. One may compute 
\begin{align*}
&\quad-\tm\partial_j [\det(D(\theta_\mu)^{-1})] \det(D\theta_\mu)\\
&=-\tm[\partial_j [\det(D(\theta_\mu)^{-1})] \tmi\det(D\theta_\mu)]=\tm\partial_j\tmi \det(D\theta_\mu) \tm\det(D(\theta_\mu)^{-1}).
\end{align*}
We immediately have for all $k\in\Nz$ that
\begin{align*}
[\mu\mapsto \det(D\theta_\mu)]\in C^\omega(\Br, BC^{k+1}(\R^m)).
\end{align*}
It follows again from \cite[Proposition~4.1]{EscPruSim03} that
\begin{align*}
[\mu\mapsto \tm\partial_j\tmi \det(D\theta_\mu)]\in C^\omega(\Br, BC^k(\R^m)).
\end{align*}
On the other hand, we obtain
\begin{align*}
[\mu\mapsto \tm \det(D(\theta_\mu)^{-1})]=1/\det(D\theta_\mu)\in C^\omega(\Br, BC^k(\R^m)).
\end{align*}
Therefore, for every $u\in \F^{s+1}_p(\R^m,E)$ and $v\in  \F^{-s}_{p^\prime}(\R^m,E)$, 
\begin{align*}
[\mu\mapsto \langle \tm \partial_j \tmi u, v \rangle] \in C^\omega(\Br).
\end{align*}
Now \cite[Proposition~1]{Browd62} implies that
\begin{align*}
[\mu\mapsto \tm\partial_j\tmi ] \in C^\omega(\Br, \L(\F^{s+1}_p(\R^m,E), \F^s_p(\R^m,E))).
\end{align*}
(ii) $s\in (-1,0)$. \cite[Proposition~4.1]{EscPruSim03} and (i) show that
\begin{align*}
[\mu\mapsto \tm\partial_j\tmi ] \in & C^\omega(\Br, \L(\F^{s+2}_p(\R^m,E), \F^{s+1}_p(\R^m,E)))\\
&\cap C^\omega(\Br, \L(\F^s_p(\R^m,E), \F^{s-1}_p(\R^m,E))).
\end{align*}
Thus for any $\mu_0\in \Br$, there exists some constants $r_i$, $M_i$ and $R_i$ with $i=1,2$ such that for all $\mu\in \B^m(\mu_0,r_i)$ 
\begin{align}
\label{S3: Cauchy est}
\| \frac{\partial^\alpha}{\partial \mu^\alpha} [\tm\partial_j\tmi]\|_{X_i} \leq M_i\frac{\alpha !}{R_i^{|\alpha|}},\hspace{1em} \alpha\in \N_0^m.
\end{align}
Here $X_1:=\L(\F^{s+2}_p, \F^{s+1}_p)$ and $X_2:=\L(\F^s_p, \F^{s-1}_p)$. 
It follows from interpolation theory that $[\mu\mapsto \tm\partial_j\tmi ] \in  C^\infty(\Br, \L(\F^{s+1}_p(\R^m,E), \F^s_p(\R^m,E)))$. Indeed, we have for $h=1,2,\cdots,m$ and any $\mu\in \Br$
$$ \| \frac{\theta^*_{\mu+t e_h} \partial_j \theta_*^{\mu+t e_h} -\tm\partial_j\tmi}{t} -\frac{\partial}{\partial \mu_h} \tm\partial_j\tmi \|_{X_i}  \to 0, \quad t\to 0,\quad i=1,2.$$
By interpolation theory, the above limits converge when $X_i$ is replaced by $X:=\L(\F^{s+1}_p(\R^m,E), \F^s_p(\R^m,E))$. Continuity, or even $C^\infty$-smoothness, of $\tm\partial_j\tmi$ in $X$ can be verified analogously.
\smallskip\\
Similar Cauchy estimate to \eqref{S3: Cauchy est} holds for $X$ by interpolation theory, i.e., for any $\mu_0\in \Br$, there exists some constants $r=\min r_i$, $M=\max M_i$ and $R=\min R_i$ such that for all $\mu\in \B^m(\mu_0,r)$ 
\begin{align*}
\|\frac{\partial^\alpha}{\partial \mu^\alpha} [\tm\partial_j\tmi]\|_{\L(\F^{s+1}_p, \F^s_p)} \leq M\frac{\alpha !}{R^{|\alpha|}},\hspace{1em} \alpha\in \N_0^m.
\end{align*}
It is well-known that this estimate implies that 
\begin{align*}
[\mu\mapsto \tm\partial_j\tmi ] \in C^\omega(\Br, \L(\F^{s+1}_p(\R^m,E), \F^s_p(\R^m,E))).
\end{align*}
(iii) For $s\in (-\infty,n]$, in view of the proofs for \cite[Proposition~4.1, Theorem~4.2]{EscPruSim03}, we thus infer that for any linear differential operator $\tilde{\mathcal{A}}=\sum_{|\alpha|\leq l}a_\alpha \partial^\alpha$, if $a_\alpha\in BC^n(\R^m,\L(E))\cap C^{n+k+j}(O,\L(E))$ for $O:=\varphi_{\kappa_c}(\O)$,
then 
\begin{align*}
[\mu\mapsto \tm\tilde{\mathcal{A}}\tmi] \in C^\omega (\Br, \L(\F^{s+l}_p(\R^m,E), \F^s_p(\R^m,E))).
\end{align*}
A similar proof to (i) shows that \cite[Lemma~3.1]{ShaoPre} still holds for Sobolev-Slobodeckii and Bessel potential spaces of negative order. Indeed, let
$$\F^{s,\M}_\cp:=\{u\in \F^s(\M,E): {\supp}(u)\subset\pst(\bar{\B}(x_c,5\varepsilon_0)) \}, $$
and
$$\F^{s,\R^m}_\cp:=\{u\in \F^s(\R^m,E): {\supp}(u)\subset \bar{\B}(x_c,5\varepsilon_0) \} . $$
Pick $\bar{\B}(x_c,5\varepsilon_0)\subset\mathring{U}\subset\subset\Q$ with $U$ closed, and $\tau\in \mathcal{D}(\mathring{U},[0,1])$ with $\tau|_{\bar{\B}(x_c,5\varepsilon_0)}\equiv 1$. 
For $r> 0$, given any $u\in W^r_{p,\cp}$ and $v\in W^{-r}_{p^\prime}(\R^m,E)$, we have
\begin{align*}
|\langle  u, \kbt v\rangle_\M |&= \|\langle \kft u, \tau v \sqrt{|\det G|}\rangle|\\
&\leq M \|\kft u\|_{W^r_p(\R^m)} \|v\|_{W^{-r}_{p^\prime}(\R^m)}
\leq M \|u\|_{W^r_p(\M)} \|v\|_{W^{-r}_{p^\prime}(\R^m)}.
\end{align*}
Here $G$ is the local matrix expression of the metric $g$, and
$\langle \cdot,\cdot \rangle_\M$ denotes the duality pairing from $\F^r_p(\M,E)\times\F^{-r}_{p^\prime}(\M,E)$ to $\R$. 
The ultimate line follows from the point-wise multiplier theorem, see \cite[Section~9]{Ama13}, and a similar assertion to \cite[Lemma~3.1]{ShaoPre}. 
Thus the constant $M$ is independent of $u$ and $v$. It implies that for $r\in\R$
$$\kbt\in \Lis(\F^{r,\R^m}_\cp, \F^{r,\M}_\cp),\quad \text{ with }\quad [\kbt]^{-1}=\kft. $$
Modifying the proof \cite[Proposition~3.6]{ShaoPre} in an obvious way and applying Lemma \ref{S3: involve time}, we have proved the statement of (b). 
\end{proof}
Recall that $\partial_\nu=\partial_{\nu_\M}$.
\begin{prop}
\label{S3: normal der}
$\Tu\partial_\nu \Tuei =\partial_\nu$ on $H^1_p(\Omega_i)$.
\end{prop}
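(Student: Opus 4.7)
The plan is to reduce the claim to a local coordinate computation that exploits the fact that $\theta_\mu(x,y)=(x+\chi_m(x)\varpi(y)\mu,\,y)$ fixes the $y$-coordinate. Since $\Theta_{\xi(t)\mu}$ acts as the identity outside $\Oet$, both $\Tu$ and $\Tuei$ act trivially there, and the claim is automatic on the complement of $\Oet$. Inside $\Oet$, I would pull back via $\pset$: for $u\in H^1_p(\Omega_i)$ set $u^\flat:=\kfet u$. The tubular parameterization $\pset(x,y)=\pst(x)+y\,\nu_\M(\pst(x))$ immediately gives
\[
(\partial_\nu u)(\pset(x,y)) = \nabla u(\pset(x,y))\cdot\nu_\M(\pst(x)) = \partial_y u^\flat(x,y),
\]
so in the chart $\partial_\nu$ is simply the partial $\partial_y$. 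Moreover, since $\chi$ is supported in $\B(y_c,2\varepsilon_0)$ with $|y_c|>5\varepsilon_0$, we have $\chi(y)=0$ in a neighborhood of $y=0$, so the vertical component of $\theta_{\xi(t)\mu,\xi(t)\eta}$ is inactive near $\M$ and $\Theta_{\xi(t)\mu,\xi(t)\eta}$ agrees with $\Theta_{\xi(t)\mu}$ there.

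A direct chain-rule computation then yields
\[
[\partial_y,\theta_{\xi(t)\mu}^{\ast}]v = \chi_m(x)\,\varpi'(y)\,\xi(t)\mu\cdot\bigl(\theta_{\xi(t)\mu}^{\ast}\partial_x v\bigr),
\]
which vanishes throughout $\{|y|\le 2a/3\}$ because $\varpi\equiv 1$ on $(-2a/3,2a/3)$ and hence $\varpi'\equiv 0$ there. This interval is a full neighborhood of $\M$ and contains the natural domain of action of $\partial_\nu$, so on this region we obtain the coordinate identity $\theta_{\xi(t)\mu}^{\ast}\partial_y\,\theta_{\ast}^{\xi(t)\mu}=\partial_y$. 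Transferring back to $\Omega_i$ via Lemma~\ref{S3: main lem} together with the identity action of $\Theta_{\xi(t)\mu}$ off $\Oet$ (and the explicit decomposition $\Tu u=\kbet\,\tm\,\kfet(\vge u)+(1-\vge)u$) yields the claimed global operator identity $\Tu\partial_\nu\Tuei=\partial_\nu$ on $H^1_p(\Omega_i)$.

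The main subtlety I anticipate is packaging the local-chart commutativity into a genuine operator identity on the global space: the cutoff structure of $\Tu$ requires checking that the contributions from $\vge$ and $1-\vge$ combine correctly with $\partial_\nu$ (using $\partial_\nu(\vge u)=\vge\,\partial_\nu u+(\partial_\nu\vge)\,u$ and exploiting that the error term from $\partial_\nu\vge$ is cancelled by the analogous term from $\partial_\nu(1-\vge)$), and that $\Tuei$ indeed reduces to $\Tui$ in a neighborhood of $\M$. Both of these reduce to routine bookkeeping using the disjoint support properties of $\chi$, $\chi_m$, and $\vge$, so the genuine analytic content lies entirely in the one-line commutator identity above.
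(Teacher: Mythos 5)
Your argument is correct and is essentially the paper's own proof: reduce to the adapted tubular chart where $\partial_\nu$ becomes $\partial_{m+1}$, use that $\theta_\mu$ fixes the $y$-coordinate together with $\varpi'\equiv 0$ on $(-2a/3,2a/3)$ (and $\chi\equiv 0$ near $y=0$, so $\theta_{\mu,\eta}$ reduces to $\theta_\mu$ there) to commute $\partial_{m+1}$ with the truncated translation near $\M$, and then unwind the $\vge$, $1-\vge$ cutoffs. The paper's displayed chain of equalities is precisely this bookkeeping, with the commutation step appearing as $\partial_{m+1}\tmi=\tmi\partial_{m+1}$ on the relevant neighborhood of $y=0$.
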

\begin{proof}
It suffices to show that 
$\Tm\partial_\nu \Tmei=\partial_\nu$. On account of 
$$\kf(\partial_\nu u)=\kf(\nu_\M \cdot \nabla u)=\partial_{m+1} \kf \mathscr{E}_i u$$ 
for any $u\in H^1_p(\Omega_i)$, one readily computes
\begin{align*}
&\quad \Tm\partial_\nu \Tmei u = \Tm \partial_\nu \kbet \tmi \kfet \vge \mathscr{E}_i u +\Tm \partial_\nu (1-\vge)  u\\
&=\kbt\tm\kft \varsigma \partial_\nu \kbet \tmi \kfet \vge \mathscr{E}_i u + (1-\varsigma)\partial_\nu \kbet \tmi \kfet \vge \mathscr{E}_i u +\partial_\nu (1-\vge) u\\
&=\kbt\tm\kft \varsigma \kbt \partial_{m+1} \tmi \kfet \vge \mathscr{E}_i u + (1-\varsigma)\kbt\partial_{m+1}  \tmi \kfet \vge \mathscr{E}_i u +\partial_\nu (1-\vge) u\\
&=\kbt\tm\kft \varsigma \kbt \tmi \partial_{m+1} \kfet \vge \mathscr{E}_i u +\kbt\tm\kft(1-\varsigma)\kbt\tmi \partial_{m+1} \kfet \vge \mathscr{E}_i u\\
&\quad 
  +\partial_\nu (1-\vge)  u\\
&=\kbt \partial_{m+1} \kfet \vge \mathscr{E}_i  u +\partial_\nu (1-\vge)  u=\partial_\nu u.
\end{align*}
In the above, we have used the fact that $\Tm \partial_\nu (1-\vge) u=(1-\vge)   u$.
\end{proof}

Recall definitions~\eqref{S3: Vlme} and \eqref{S3: Vlm}.
The following proposition shows that the space $\ej$ is invariant under the transformation $\Tlme$ and $\Tlm$, respectively.
\begin{prop}
\label{S3: Omega-invariant spaces}
$[(\vartheta,h)\mapsto (\Tlme \vartheta,\Tlm h)]\in\Lis(\ej)\cap \Lis(\zej)$ with $(\lambda,\mu,\eta)\in\Br$. Moreover, there exist some $B_{\lambda,\mu,\eta}$, $B_{\lambda,\mu}$ satisfying 
\begin{align*}
\begin{cases}
[(\lambda,\mu,\eta)\mapsto B_{\lambda,\mu,\eta}]\in C^\omega(\Br, \L(\ea,\fa)),\\
[(\lambda,\mu)\mapsto B_{\lambda,\mu}]\in C^\omega(\Br,\L(\eb,\fb)),
\end{cases}
\end{align*}
such that
\begin{align*}
\begin{cases}
\partial_t [\vartheta_{\lambda,\mu,\eta}]=(1+\xi^\prime\lambda)\Tlme \partial_t \vartheta + B_{\lambda,\mu,\eta}(\vartheta_{\lambda,\mu,\eta}),\\
\partial_t[h_{\lambda,\mu}]=(1+\xi^{\prime}\lambda)\Tlm \partial_t h +B_{\lambda,\mu}(h_{\lambda,\mu}).
\end{cases}
\end{align*}
In particular, $B_{\lambda,0,0}=0$ and $B_{\lambda,0}=0$.  
\end{prop}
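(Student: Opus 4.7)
The plan is to treat the bulk component $\Tlme\vartheta$ and the interface component $\Tlm h$ separately, since $\ej=\ea\times\eb$, and to read off the isomorphism statement from the spatial results of Propositions~\ref{S3: Omega-continuity}, \ref{S3: Omega-Tue}(a), and \ref{S3：M-reg}(a), together with the temporal result of Remark~\ref{S3: rl-differentiability}. Because $\Theta_{\mu,\eta}$ is the identity outside of $\pset(\bar{\B}_{5\varepsilon_0,17a/18})\subset\subset\Omega$ and maps each subdomain $\Omega_i$ into itself, the boundary condition $\partial_{\nu_\Omega}\vartheta=0$ and the transmission condition $[\![\vartheta]\!]=0$ are both preserved. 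Invariance of the closed subspace $\zej$ is then automatic from $\xi(0)=0$ and $\varrho_\lambda(0)=0$, which give $\tilde v_{\lambda,\mu,\eta}(0,\cdot)=v(0,\cdot)$, and similarly for first temporal traces at $t=0$ when these enter the definition of $\zej$.

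For the time-derivative formula I would start from the pointwise representation
\[
\vartheta_{\lambda,\mu,\eta}(t,z)=\vartheta\bigl(\varrho_\lambda(t),\Theta_{\xi(t)\mu,\xi(t)\eta}(z)\bigr)
\]
and apply the chain rule to obtain
\[
\partial_t\vartheta_{\lambda,\mu,\eta}(t,z)=(1+\xi'(t)\lambda)\,[\Tlme\partial_t\vartheta](t,z)+\bigl[(\nabla\vartheta)\circ\Theta_{\xi(t)\mu,\xi(t)\eta}\bigr](z)\cdot\partial_t\Theta_{\xi(t)\mu,\xi(t)\eta}(z).
\]
Re-expressing $(\nabla\vartheta)\circ\Theta_{\xi(t)\mu,\xi(t)\eta}$ in terms of the spatial gradient of $\vartheta_{\lambda,\mu,\eta}$ via the inverse chain rule, the second term takes the form $B_{\lambda,\mu,\eta}(\vartheta_{\lambda,\mu,\eta})$ with
\[
B_{\lambda,\mu,\eta}(w)(t,z):=\xi'(t)\,\nabla w(t,z)\cdot\bigl[D\Theta_{\xi(t)\mu,\xi(t)\eta}(z)\bigr]^{-1}\bigl(\mu\partial_\mu+\eta\partial_\eta\bigr)\Theta_{\xi(t)\mu,\xi(t)\eta}(z),
\]
interpreted piecewise on each $\Omega_i$. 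The coefficient vector field vanishes identically at $(\mu,\eta)=(0,0)$, which yields $B_{\lambda,0,0}=0$. The analogous chain-rule computation with $\Theta_\mu$ on $\M$ produces the intrinsic first-order operator $B_{\lambda,\mu}$ on $\M$, again with $B_{\lambda,0}=0$.

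The main obstacle is to verify the real-analytic dependence $B_{\lambda,\mu,\eta}\in C^\omega(\Br,\L(\ea,\fa))$ and $B_{\lambda,\mu}\in C^\omega(\Br,\L(\eb,\fb))$. Since the coefficient of $B_{\lambda,\mu,\eta}$ is $\xi'(t)$ times a tensor field that is $BC^\infty$ in $z$ and real-analytic in $(\mu,\eta)$ (and trivially constant in $\lambda$), this is precisely the situation handled by Proposition~\ref{S3: Omega-Tue}(b) applied with $l=1$ and $j=1$, combined with Lemma~\ref{S3: involve time} to absorb the $\xi'(t)$ factor into the $C^0(J;\L(\cdot,\cdot))$-target. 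The interface case is treated identically using Proposition~\ref{S3：M-reg}(b). A routine counting of spatial derivatives then confirms that the target spaces are indeed $\fa$ and $\fb$, completing the argument.
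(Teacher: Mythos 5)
Your proposal is correct and follows essentially the same route as the paper: boundedness from the spatial results plus the open mapping theorem, preservation of the boundary/jump/trace conditions, the chain rule producing a first-order operator $B_{\lambda,\mu,\eta}$ whose coefficients carry explicit factors of $\mu$ and $\eta$ (hence vanish at the origin), and analyticity via Proposition~\ref{S3: Omega-Tue}(b), Proposition~\ref{S3：M-reg}(b) and Lemma~\ref{S3: involve time}. The only point you underplay slightly is that membership of $\Tlme$ in $\L(\ea)$ does not quite ``read off'' from the separate spatial and temporal results because $\Tue(t)$ is time-dependent; the paper handles this with an intrinsic-norm estimate in the anisotropic (Besov) spaces as in \cite[Lemma~8.3]{EscPruSim03}, but this is a refinement of, not a departure from, your argument.
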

\begin{proof}
(i) Following the proof of \cite[Proposition~2.4]{EscPruSim03} and interpolation theory, we can show that for any Banach space $X$, $\rl\in{\Lis}(\F(I,X))$ and $[\rl]^{-1}=\varrho^{\lambda}_{\ast}$ with ${\F}\in\{BC^s,W^s_p\}$. In particular, there exists $M>0$ such that $\|\rl\|_{\L(\F(I,X))}\leq M$ for $\lambda\in{\B}$. A similar estimate as in \cite[Lemma~8.3]{EscPruSim03} by using the intrinsic norms of Besov spaces reveals that 
$$[(u,\rho)\mapsto(\tilde{u}_{\lambda,\mu,\eta},\tilde{\rho}_{\lambda,\mu}] \in \Lis(\mathbb{E}_1(J,\R^{m+1};E)\times\mathbb{E}_2(J,\R^m;E)).$$
(ii) Observe that
\begin{align}
\label{S3: time trace}
(\Tlme \vartheta,\Tlm h)(0)=(\vartheta,h)(0),\hspace{1em} \partial_t(\Tlme \vartheta,\Tlm h)(0)=\partial_t(\vartheta,h)(0),
\end{align}
whenever the corresponding derivative exists.
For any $z\in\Omega\setminus \Oet$, $\Tue \vartheta(z)=\vartheta(z)$, we conclude that $\partial_{\nu_{\Omega}}\Tue \vartheta=0$. For every $t\in J$, $\vartheta(t)\in BC(\Omega)$. It follows from Proposition~\ref{S3: Omega-continuity} that $\Tue(t) \vartheta(t)\in BC(\Omega)$, which implies that $[\![\Tue(t)\vartheta(t) ]\!]=0$. Moreover, for $ (\mu,\eta)\in\Br$
\begin{align*}
&\quad \|\Tue \vartheta\|_{\ea}=\sum_{i}\|\mathscr{R}_i\kbet\tTue \kfet(\vge \mathscr{E}_i \vartheta) + (1-\vge) \vartheta \|_{\ea}\\
&\leq M\sum_i  [\|\tTue\kfet(\vge \mathscr{E}_i \vartheta) \|_{\mathbb{E}_1(J,\R^{m+1};E)}+\|(1-\vge)\mathscr{E}_i\vartheta\|_{\ea}]\leq M \|\vartheta\|_{\ea}.
\end{align*}
The last line follows from (i), Lemma~\ref{S3: main lem} and point-wise multiplier theorem, see \cite[Section~3.3.2]{Trie78}. 
In virtue of \cite[Lemma~3.1]{ShaoPre}, one can show similarly that
$$[h\mapsto \Tu h]\in \L(\eb),\quad \mu\in\Br.$$
Then it is a direct consequence of the open mapping theorem that 
$$[(\vartheta,h)\mapsto (\Tlme \vartheta,\Tlm h)]\in\Lis(\ej).$$ 
By \eqref{S3: time trace}, the statement $[(\vartheta,h)\mapsto (\Tue \vartheta,\Tu h)]\in\Lis(\zej)$ is immediate.
\smallskip\\
(iii) The temporal derivative of $\vartheta_{\lambda,\mu,\eta}$ can be computed as follows.
\begin{align*}
&\quad \partial_t [\vartheta_{\lambda,\mu,\eta}]= (1-\vge)\partial_t[\rl \vartheta]
+ \sum_i \mathscr{R}_i\kbet\partial_t[\tTue \kfet(\vge \rl \mathscr{E}_i\vartheta)]\\
&=(1-\vge)(1+\xi^{\prime} \lambda)\rl \partial_t\vartheta
+\sum_i \{ \mathscr{R}_i\kbet\tTue \kfet(\vge \partial_t[\rl \mathscr{E}_i \vartheta])\\
&\quad +\mathscr{R}_i \sum\limits_{j=1}^m \kbet(\xi^\prime \chi_m\varpi \mu_j \tTue\partial_j[\kfet\vge\rl \mathscr{E}_i \vartheta] )\\
&\quad +\mathscr{R}_i\kbet(\xi^\prime \chi_m\chi\eta \tTue\partial_{m+1}[\kfet\vge\rl \mathscr{E}_i \vartheta]) \}\\
&=(1+\xi^\prime\lambda)\Tlme \partial_t\vartheta + B_{\lambda,\mu,\eta}(\vartheta_{\lambda,\mu,\eta}).
\end{align*}
The last line follows from the definition of $\Tlme$ and the equality
\begin{align*}
&\quad \mathscr{R}_i \kbet\tTue \kfet(\vge \partial_t[\rl \mathscr{E}_i \vartheta])+(1-\vge)(1+\xi^{\prime} \lambda)\rl \partial_t\vartheta\\
&=\mathscr{R}_i\kbet\tTue \kfet[\vge(1+\xi^\prime\lambda)\rl \partial_t \mathscr{E}_i \vartheta]+(1-\vge)(1+\xi^{\prime} \lambda)\rl \partial_t\vartheta\\
&=(1+\xi^\prime\lambda)\Tlme \partial_t\vartheta.
\end{align*}
Given $u\in H^1_p(\Omega\setminus\M)$, the term $B_{\lambda,\mu,\eta}$ can be explicitly written as
\begin{align*}
B_{\lambda,\mu,\eta}(u)&=\sum_i \{ \sum\limits_{j=1}^m \mathscr{R}_i\kbet(\xi^\prime \chi_m\varpi \mu_j \tTue\partial_j[\kfet\vge\Tuei \mathscr{E}_i u] )\}\\
&\quad+\mathscr{R}_i\kbet(\xi^\prime \chi_m\chi\eta \tTue\partial_{m+1}[\kfet\vge\Tuei \mathscr{E}_i u])\\
&=\sum_i \{ \sum\limits_{j=1}^m \mathscr{R}_i\xi^\prime \kbet\chi_m \kbet\varpi \mu_j \Tue\mathcal{A}^j\Tuei \mathscr{E}_i u \\
&\quad +\mathscr{R}_i \xi^\prime \kbet\chi_m \kbet\chi\eta \Tue\mathcal{A}^{m+1}\Tuei \mathscr{E}_i u \}.
\end{align*}
Here $\mathcal{A}^j$ with $j=1,\cdots,m+1$ are first order linear differential operators defined on $\Omega\setminus \M$ compactly supported in $\T_{a,{\kappa_c}}$ satisfying $\kfet\mathcal{A}^j u=\vge\partial_j \kfet u$ for all $u\in H^1_p(\Omega\setminus\M)$.
By means of Proposition~\ref{S3: Omega-Tue}(b) and the real analytic compatibility of the atlas $\Ae$ with the Euclidean structure, it is not hard to check that 
\begin{align}
\label{S3: analyticity of Blme}
[(\lambda,\mu,\eta)\mapsto B_{\lambda,\mu,\eta}]\in C^\omega(\Br, C^1(J;\L(W^{s+1}_p(\Omega\setminus\M),W^s_p(\Omega\setminus\M)))).
\end{align}
In \cite[Proposition~3.10]{ShaoPre}, it is shown that 
\begin{align*}
\partial_t[h_{\lambda,\mu}]=(1+\xi^{\prime}\lambda)\Tlm \partial_t h +B_{\lambda,\mu}(h_{\lambda,\mu})
\end{align*}
with $B_{\lambda,0}=0$ and
\begin{align}
\label{S3: analyticity of Blm}
[(\lambda,\mu)\mapsto B_{\lambda,\mu}]\in C^\omega(\Br,C^1(J;\L( W^{s+1}_p(\M), W^s_p(\M)) )).
\end{align}
This completes the proof.
\end{proof}
With minor modification of the above proof, we immediately have the following similar result.
\begin{prop}
\label{S3: Omega: invariant spaces 2}
For $(\lambda,\mu,\eta)\in\Br$ with sufficiently small $r_0>0$, 
$$[(f,g,q)\mapsto (f_{\lambda,\mu,\eta}, g_{\lambda,\mu}, q_{\lambda,\mu})]\in\Lis(\prod_{i=1}^3 \mathbb{F}_i(J),\prod_{i=1}^3 \mathbb{F}_i(J)),$$ 
and
$$[(f,g,q)\mapsto (f_{\lambda,\mu,\eta}, g_{\lambda,\mu}, q_{\lambda,\mu})]\in\Lis(\prod_{i=1}^3 \prescript{}{0}{\mathbb{F}}_i(J),\prod_{i=1}^3 \prescript{}{0}{\mathbb{F}_i}(J)).$$
\end{prop}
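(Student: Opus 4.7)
The plan is to follow the template of Proposition~\ref{S3: Omega-invariant spaces}, decomposing $\Tlme=\Tue(t)\circ\rl$ and $\Tlm=\Tu(t)\circ\rl$ and reducing the claim to (i) uniform boundedness of $\rl$ on Sobolev--Slobodeckii spaces in time with values in a Banach space, and (ii) uniform boundedness in $t$ of $\Tue(t)$ and $\Tu(t)$ as spatial operators on $L_p(\Omega)$, $L_p(\M)$, $W^{1-1/p}_p(\M)$, and $W^{2-1/p}_p(\M)$. The product of these estimates handles the purely $L_p(J;\cdot)$ components via Fubini; the fractional-in-time components of $\mathbb{F}_2(J)$ and $\mathbb{F}_3(J)$ require an additional Gagliardo-seminorm argument.

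For the time factor, step (i) of the proof of Proposition~\ref{S3: Omega-invariant spaces} already delivers $\rl\in\Lis(W^s_p(J;X))$ uniformly in $\lambda\in\B(0,r_0)$ for any $s\geq 0$ and any Banach space $X$, with inverse $\varrho_\ast^\lambda$. For the spatial factor, Proposition~\ref{S3: Omega-continuity} gives the uniform-in-$t$ isomorphism $\Tue(t)\in\Lis(L_p(\Omega))$, while Lemma~\ref{S3: main lem} together with the argument in the proof of Proposition~\ref{S3：M-reg}(a), applied at the regularity scales $W^{1-1/p}_p$ and $W^{2-1/p}_p$, yields $\Tu(t)\in\Lis(L_p(\M))\cap\Lis(W^{1-1/p}_p(\M))\cap\Lis(W^{2-1/p}_p(\M))$ uniformly in $t\in J$ and $\mu\in\B(0,r_0)$. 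For the Slobodeckii-in-time pieces I would use the intrinsic norm and split $\Tu(t)g(t)-\Tu(s)g(s)=\Tu(t)(g(t)-g(s))+(\Tu(t)-\Tu(s))g(s)$. The first summand is controlled by the uniform operator bound; for the second, the smoothness of $t\mapsto\xi(t)\mu$ together with Proposition~\ref{S3：M-reg}(b) (applied with $\mathcal{A}=\partial_j$) gives the Lipschitz-in-$t$ estimate $\|\Tu(t)-\Tu(s)\|_{\L(W^1_p(\M),L_p(\M))}\leq C|t-s|$, and interpolation with the uniform bound on $L_p(\M)$ produces $\|\Tu(t)-\Tu(s)\|_{\L(W^\sigma_p(\M),L_p(\M))}\leq C|t-s|^{\min(\sigma,1)}$ for $\sigma\in[0,2-1/p]$. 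Inserting this into the Gagliardo double integral and using $g\in L_p(J;W^{2-1/p}_p(\M))$, respectively $g\in L_p(J;W^{1-1/p}_p(\M))$, the required integrability follows from the assumption $p>m+3$.

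The vanishing time-trace variant is immediate: since $\xi(0)=0$, one has $\Tue(0)=\id$, $\Tu(0)=\id$, and $\varrho_\lambda(0)=0$, so both $\Tlme$ and $\Tlm$ fix the values at $t=0$, and hence map each $\prescript{}{0}{\mathbb{F}_i}(J)$ into itself. The inverses are given explicitly by $\Tuei(t)\,\varrho^\lambda_\ast$ and $\Tui(t)\,\varrho^\lambda_\ast$, to which exactly the same estimates apply in the reverse direction; the open mapping theorem (or direct verification of the two-sided bound) then upgrades the bounded maps to topological isomorphisms.

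The step I expect to be most delicate is the Slobodeckii-in-time estimate on the commutator $(\Tu(t)-\Tu(s))g(s)$ in the $\mathbb{F}_3$ component, where $g(s)\in W^{1-1/p}_p(\M)$ lies just below one derivative. This is precisely where the interpolation of the one-derivative-loss Lipschitz bound with the zero-loss uniform bound enters, and it is also where the hypothesis $p>m+3$ pays off, providing enough positive power of $|t-s|$ in the integrand to absorb the Gagliardo weight $|t-s|^{-(1/2-1/2p)p-1}$. Once this estimate is in place, the remainder of the argument proceeds verbatim from step (ii) of the proof of Proposition~\ref{S3: Omega-invariant spaces}.
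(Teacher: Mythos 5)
Your proposal is correct and follows essentially the same route as the paper, which simply reduces the statement to a minor modification of the proof of Proposition~\ref{S3: Omega-invariant spaces} (there the anisotropic Slobodeckii estimates are handled via intrinsic norms as in the cited reference, which is exactly your commutator-plus-interpolation argument spelled out). One small remark: the exponent count in your final step only requires $p>1$ (you need $1-1/p>1/2-1/2p$), so the hypothesis $p>m+3$ is not actually what saves the $\mathbb{F}_3$ estimate.
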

The main theorem of this section is
\begin{theorem}
\label{S3: main thm}
Suppose that 
$$(\vartheta,h)\in BC(J\times \Omega,E)\times BC(J \times \M,E) .$$ 
Then 
$$(\vartheta,h) \in C^k(\mathring{J}\times \T_{a/3}\!\setminus\!\M,E) \times C^k(\mathring{J}\times \M, E)$$
iff for any $(t_c,z_c) \in \mathring{J}\times \T_{a/3}\!\setminus\!\M$, there exists some $r_0=r_0(t_c,z_c)>0$ and a corresponding family of parameter-dependent diffeomorphisms $\{(\Tlme,\Tlm):(\lambda,\mu,\eta)\in\Br\}$ such that
\begin{align*}
[(\lambda,\mu,\eta)\mapsto (\vartheta_{\lambda,\mu,\eta},h_{\lambda,\mu})]\in C^k(\Br, BC(J\times \Omega,E)\times BC(J \times \M,E)).
\end{align*}
\end{theorem}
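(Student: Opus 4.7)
The plan is to prove the two directions of the equivalence separately, exploiting the observation that for sufficiently small parameters the family $(\Tlme,\Tlm)$ acts as a localized perturbation centered at $(t_c,z_c)$ which, when expressed in the charts $\pset,\pst$, reduces to an honest Euclidean translation $(t,x,y)\mapsto(t+\lambda,x+\mu,y+\eta)$ on a neighborhood of $(t_c,z_c)$ where all the cutoff functions $\xi,\chi_m,\chi,\varpi$ equal $1$.

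For the forward direction $(\Rightarrow)$, assume $\vartheta\in C^k(\mathring{J}\times\T_{a/3}\setminus\M,E)$ and $h\in C^k(\mathring{J}\times\M,E)$. The supports of $\Tlme\vartheta-\vartheta$ and $\Tlm h-h$ lie in a fixed compact set determined by the cutoffs; on this set $\vartheta$ and $h$ are $C^k$ by hypothesis, while the parameter dependence of $\theta_{\mu,\eta}$, $\varrho_\lambda$, and hence of $\Theta_{\mu,\eta}$, is real analytic. The chain rule together with the pointwise multiplier structure of $BC$, applied exactly as in the proofs of Proposition~\ref{S3: Omega-continuity} and Proposition~\ref{S3: Omega-Tue}, then gives
\begin{equation*}
[(\lambda,\mu,\eta)\mapsto(\vartheta_{\lambda,\mu,\eta},h_{\lambda,\mu})]\in C^k(\Br,BC(J\times\Omega,E)\times BC(J\times\M,E)).
\end{equation*}

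For the backward direction $(\Leftarrow)$, which carries the essential content, I would compose the hypothesised $C^k$-family with the bounded linear evaluation $\delta_{(t_c,z_c)}\colon BC(J\times\Omega,E)\to E$ to obtain
\begin{equation*}
(\lambda,\mu,\eta)\ \mapsto\ \vartheta_{\lambda,\mu,\eta}(t_c,z_c)\ =\ \vartheta\bigl(t_c+\xi(t_c)\lambda,\,\Theta_{\xi(t_c)\mu,\xi(t_c)\eta}(z_c)\bigr)\in C^k(\Br,E).
\end{equation*}
Since $\xi(t_c)=\chi_m(x_c)=\chi(y_c)=\varpi(y_c)=1$ by construction, on a smaller neighborhood of the origin this reduces to $G(\lambda,\mu,\eta):=\vartheta\bigl(t_c+\lambda,\pset(x_c+\mu,y_c+\eta)\bigr)$, and the map $F(\lambda,\mu,\eta):=(t_c+\lambda,\pset(x_c+\mu,y_c+\eta))$ is a real analytic local diffeomorphism from an $\R^{m+2}$-neighborhood of $0$ onto an open neighborhood $V\subset\mathring{J}\times\T_{a/3}\setminus\M$ of $(t_c,z_c)$ (its Jacobian at the origin is nonsingular because $\pset$ is a chart of $\Omega$). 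Consequently $\vartheta|_V=G\circ F^{-1}\in C^k(V,E)$, and since $(t_c,z_c)$ was arbitrary we conclude $\vartheta\in C^k(\mathring{J}\times\T_{a/3}\setminus\M,E)$. The argument for $h$ is identical, using the $m+1$ parameters $(\lambda,\mu)\in\R^{m+1}$ and the chart $\pst$ on $\M$ in place of $\pset$.

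The main technical obstacle I expect is the real analytic case $k=\omega$: one has to show $G\circ F^{-1}$ is real analytic with uniform radius of convergence on a neighborhood of $(t_c,z_c)$. This is handled by combining Cauchy-type estimates on the derivatives of $G$ in $\Br$ (extracted from the $C^\omega$-hypothesis together with the uniform operator bounds of Proposition~\ref{S3: Omega-continuity}) with the real analyticity of $F^{-1}$, along the lines of the analytic argument used in the proof of Proposition~\ref{S3: U-continuity}(c). The cases $k\in\Nz\cup\{\infty\}$ reduce directly to the chain rule.
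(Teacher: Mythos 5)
Your backward implication --- the only direction the paper actually invokes in Sections~4 and~5 --- is correct and is precisely the argument behind the reference to \cite{ShaoPre} (and \cite{EscPruSim03}) in the paper's one-line proof: compose the $C^k$ parameter-dependence with the bounded linear evaluation functional at $(t_c,z_c)$, use that $\xi(t_c)=\chi_m(x_c)=\varpi(y_c)=\chi(y_c)=1$ so that near the origin the localized translation is an honest translation, and invert the resulting real analytic local diffeomorphism $F$. For $k=\omega$ you do not need the extra uniformity you worry about: $G$ is real analytic near $0$ simply because evaluation is a bounded linear map applied to a $C^\omega$ Banach-space-valued function, and a composition of real analytic maps is real analytic; analyticity at each $(t_c,z_c)$ is a local statement.

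The forward implication, however, has a genuine gap as you have written it. You assert that the support of $\Tlme\vartheta-\vartheta$ lies in a compact set on which $\vartheta$ is $C^k$ ``by hypothesis'', but this is not so: the horizontal translation $\theta_\mu(x,y)=(x+\chi_m(x)\varpi(y)\mu,\,y)$ differs from the identity for all $|y|<13a/18$, and the cutoff $\vge$ is nonzero for $|y|$ up to $17a/18$, whereas the hypothesis only provides $C^k$ regularity on $\T_{a/3}\!\setminus\!\M$. Moreover, even on the part of the support inside $\T_{a/3}$, differentiating $\mu\mapsto\vartheta_{\lambda,\mu,\eta}$ \emph{into} $BC(J\times\Omega,E)$ requires the tangential derivatives $\partial_x(\vartheta\circ\pset)$ to be bounded uniformly as $y\to 0$, and this does not follow from $\vartheta\in BC(J\times\Omega,E)\cap C^k(\mathring{J}\times\T_{a/3}\!\setminus\!\M,E)$ (a function behaving like $|y|^{1/2}\sin(x_1/|y|)$ near $\M$ is $BC$ and smooth off $\M$, yet $\mu\mapsto\tm\vartheta$ is not even Lipschitz into $BC$). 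This imprecision is arguably inherited from the statement of the theorem itself, but your proof should either confine itself to the backward implication (which is all that is used) or record the additional hypotheses --- regularity on a tubular neighborhood covering the full support of the cutoffs, with uniform bounds on derivatives up to $\M$ --- under which the forward implication actually holds.
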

\begin{proof}
The proof follows in a similar manner to that of \cite[Theorem~3.5]{ShaoPre}.
\end{proof}
\medskip

\section{\bf Regularity of solutions to the Stefan Problem}

Throughout the rest of this paper, we assume that $\sigma>0$ and $d_i, \gamma \in C^{k+2}(0,\infty) $,   $\psi_i\in C^{k+3}(0,\infty)$  for $i=1,2$ with $k\in\N\cup \{\infty,\omega\}$ such that
$$\kappa_i(u)=-u\psi_i^{\prime\prime}(u)>0,\quad d_i(u)>0,\quad u\in(0,\infty),$$
and 
$$\gamma(u)>0,\quad u\in(0,\infty),\quad \text{ or }\quad \gamma\equiv 0.  $$
Let $J:=[0,T]$.
We define
\begin{align*}
&\gb:=H^1_p(J; W^{-1/p}_p(\M))\cap L_p(J; W^{2-1/p}_p(\M))\\
&\gc:=H^1_p(J; W^{-1-1/p}_p(\M))\cap L_p(J; W^{1-1/p}_p(\M)),
\end{align*}
and 
\begin{equation*}
\begin{aligned}
&\ca:=C(J\times\bar{\Omega})\cap C(J;C^1(\bar{\Omega}_i))\\
&\cb:=C(J;C^3(\M))\cap C^1(J;C(\M)).
\end{aligned}
\end{equation*}
It is shown in \cite[Section~3]{PruSimZac12} that
\begin{equation}
\label{S5: embed of ea,eb}
\ea\hookrightarrow \ca,\hspace{1em} \eb\hookrightarrow \cb.
\end{equation}
Observe that the constants in these embeddings blow up as $T\to 0$, however, they are uniform in $T$ if one considers the space $\zej$!

\subsection{\bf Linearization at a real analytic temperature and the initial interface}
Suppose that $\hat{z}_0=(\hat{\vartheta}_0,\hat{h}_0)\in\fd$ satisfies $ \hat{\vartheta}_0>0$.
Given any $\varepsilon>0$, by the Weierstrass approximation theorem, we can find some
$$\vartheta_A\in C^\omega(\bar{\Omega}),\quad \|\vartheta_A-\hat{\vartheta}_0\|_{BC(\Omega)}\leq \varepsilon.$$ 
For sufficiently small $\varepsilon$, $\vartheta_A>0$, and when $\gamma\equiv 0$, $l(\vartheta_A)\neq 0$. Let 
\begin{align*}
&\kappa_A(x)=\kappa(\vartheta_A(x)),\hspace{1em}d_A(x)=d(\vartheta_A(x)),\hspace{1em} l_A(x)=l(\vartheta_A(x)),\hspace{1em} \sigma_0=\frac{\sigma}{m},\\
&l_1(t,\cdot)=[\![\psi^\prime (e^{\Delta_\M t}\vartheta_A) ]\!],\hspace{1em} \gamma_1(t,\cdot)=\gamma(e^{\Delta_\M t}\vartheta_A).
\end{align*}
For a function $\vartheta\in \ea$, we do not distinguish $\vartheta|_\M$ from $\vartheta$ if the choice is self-evident from the context.
Here $\Delta_\M$ denotes the Laplace-Beltrami operator on $\M$. Similarly, we define 
\begin{align*}
&\hat{\kappa}_0(x)=\kappa(\hat{\vartheta}_0(x)),\hspace{1em}\hat{d}_0(x)=d(\hat{\vartheta}_0(x)),\hspace{1em} \hat{l}_0(x)=l(\hat{\vartheta}_0(x)),\hspace{1em}\hat{\gamma}_0=\gamma(\hat{\vartheta}_0),\\
&\hat{l}_1(t,\cdot)=[\![\psi^\prime (e^{\Delta_\M t}\hat{\vartheta}_0) ]\!],\hspace{1em} \hat{\gamma}_1(t,\cdot)=\gamma(e^{\Delta_\M t}\hat{\vartheta}_0).
\end{align*}
For $z=(\vartheta,h)\in\ej$, we define
\begin{align*}
\notag&F(z)=(\kappa_A-\kappa(\vartheta))\partial_t \vartheta +(d(\vartheta)-d_A)\Delta \vartheta -d(\vartheta)M_2(h):\nabla^2 \vartheta \\
\notag&\quad\quad \quad +d^\prime(\vartheta)|(I-M_1(h))\nabla \vartheta|^2 -d(\vartheta)(M_3(h)| \nabla \vartheta) +\kappa(\vartheta)\mathcal{R}(h)\vartheta,\\
\notag&G(z)= -([\![\psi(\vartheta)]\!] + \sigma {\cH}(h) ) +l_1 \vartheta +\sigma_0\Delta_\M h +(\gamma(\vartheta)\beta(h)-\gamma_1)\partial_t h,\\
\notag & Q(z)=[\![ (d(\vartheta)-d_A)\partial_\nu \vartheta ]\!] +(l_A -l(\vartheta))\partial_t h -([\![d(\vartheta)\nabla \vartheta]\!]| M_4(h)\nabla_\M h)\\
&\quad\quad\quad +\gamma(\vartheta)\beta(h)(\partial_t h)^2.
\end{align*}
Here we have
\begin{align*}
M_2(h)&=M_1(h)+M_1^{\sf T}(h)-M_1(h)M_1^{\sf T}(h),\\
M_3(h)&=(I-M_1(h)):\nabla M_1(h), \\
M_4(h)&=(I-M_1(h))^{\sf T}M_0(h).
\end{align*}
Employing the above notations and splitting into the principal linear part and a nonlinear part,
we arrive at the following formulation of problem \eqref{S2: transf Stefan}
\begin{equation}
\label{S5: Stefan-linear/nonlinear}
\left\{\begin{aligned}
\kappa_A \partial_t \vartheta -d_A \Delta \vartheta &=F(\vartheta,h)
 &&\text{in}&&\Omega\setminus\M,\\
\partial_{\nu_\Omega} \vartheta &=0
 &&\text{on}&&\partial \Omega,\\
[\![\vartheta]\!]&=0 &&\text{on}&&\M,\\
l_1(t )\vartheta + \sigma_0\Delta_\Sigma h- \gamma_1(t)\partial_t h &=G(\vartheta,h)
 &&\text{on}&&\M,\\
\mbox{}l_A \partial_t h -[\![d_A(x)\partial_\nu \vartheta]\!]&=Q(\vartheta,h)
 &&\text{on}&&\M,\\
\vartheta(0)=\hat{\vartheta}_0,\ h(0)&=\hat{h}_0.&&
\end{aligned}\right.
\end{equation}
We assume that $\hat{z}_0$ further satisfies the compatibility conditions ($\mathcal{CC}$), that is,
when $\gamma\equiv 0$
$$\partial_{\nu_\Omega}\hat{\vartheta}_0=0,\quad l_1(0)\hat{\vartheta}_0+\sigma_0\Delta_\M \hat{h}_0=G(\hat{z}_0), \quad Q(\hat{z}_0)+[\![ d_A\partial_{\nu} \hat{\vartheta}_0]\!]\in W^{2-6/p}_p(\M),$$
and when $\gamma> 0$
$$ \partial_{\nu_\Omega}\hat{\vartheta}_0=0,\quad l_A l_1(0)\hat{\vartheta}_0 +l_A\sigma_0\Delta_\M \hat{h}_0 -\gamma_1(0)[\![ d_A\partial_{\nu} \hat{\vartheta}_0]\!]=\gamma_1(0)Q(\hat{z}_0)+l_A G(\hat{z}_0).$$
In the definition of $(G(\hat{z}_0),Q(\hat{z}_0))$, it is understood that $\partial_t h(0)$ is replaced by
\begin{align}
\label{S5: replace h_t}
\partial_t h(0)=
\left\{\begin{aligned}
&\frac{1}{\hat{l}_0} ([\![\hat{d}_0 \partial_\nu \hat{\vartheta}_0]\!] -([\![\hat{d}_0 \nabla \hat{\vartheta}_0]\!]|M_4(\hat{h}_0)\nabla_\M \hat{h}_0 )), &&\gamma\equiv 0,\\
&\frac{1}{\beta(\hat{h}_0)\hat{\gamma}_0}([\![\psi(\hat{\vartheta}_0)]\!] +\sigma{\cH}(\hat{h}_0)),  &&\gamma>0. 
\end{aligned}\right.
\end{align}
When $\gamma\equiv 0$, we also need to impose the {\em well-posedness condition} $l(\hat{\vartheta}_0)\neq 0$ on $\Gamma_0$.

\subsection{\bf Regularity of a special solution}
In this section, we prove the analyticity of the solution to a linearized Stefan problem
with initial data $\hat{z}_0$, 
that is, we consider regularity of solutions to the following equation.
\begin{align}
\label{S5: reducing initial data}
\left\{\begin{aligned}
\kappa_A \partial_t \vartheta -d_A \Delta \vartheta &=0 &&\text{in}&& \Omega\setminus\M,\\
\partial_{\nu_\Omega}\vartheta &=0        &&\text{on}&& \partial\Omega,\\
[\![\vartheta]\!]&=0       &&\text{on}&& \M,\\
l_1 \vartheta+\sigma_0 \Delta_\M h - \gamma_1 \partial_t h &=g_1 &&\text{on}&& \M,\\
l_A \partial_t h -[\![ d_A\partial_\nu \vartheta]\!] &=q_1 &&\text{on}&& \M,\\
\vartheta(0)=\hat{\vartheta}_0,\hspace{1em} h(0)&=\hat{h}_0.
\end{aligned}\right.
\end{align}
Here $g_1:=e^{\Delta_\M t} G(\hat{z}_0)$ and $q_1:=e^{\Delta_\M t} Q(\hat{z}_0)$. 
Recall definition \eqref{S2: def of Fj}.
We set
\begin{align*}
&\fj=\{ (f,g,q,(\vartheta_0,h_0))\in \fa\times\fb\times\fc\times\fd:\\
&\quad\quad\quad (f,g,q,(\vartheta_0,h_0))\text{ satisfies the linear compatibility conditions }(\mathcal{LCC})\},
\end{align*}
where $(f,g,q,(\vartheta_0,h_0))$ is said to satisfy the linear compatibility conditions $(\mathcal{LCC})$ if
\begin{align*}
& \partial_{\nu_\Omega}\vartheta_0=0,\hspace{.5em} l_1(0)\vartheta_0+\sigma_0\Delta_\M h_0=g(0), \hspace{.5em} q(0)+[\![ d_A\partial_{\nu} \vartheta_0]\!]\in W^{2-6/p}_p(\M),\hspace{1.7em}\gamma\equiv 0,\\
&  \partial_{\nu_\Omega}\vartheta_0=0,\hspace{.5em} l_A l_1(0)\vartheta_0 +l_A\sigma_0\Delta_\M h_0 -\gamma_1(0)[\![ d_A\partial_{\nu} \vartheta_0]\!]=\gamma_1(0)q(0)+l_A g(0), 
\hspace{0.2em}\gamma> 0.
\end{align*}
We equipped $\fj$ with the following norm
\begin{align*}
\| (f,g,q,(\vartheta_0,h_0)) \|_{\fj}:=&\|f\|_{\fa}+\|g\|_{\fb} +\|q\|_{\fc} +\|(\vartheta_0, h_0)\|_{\fd}\\
& + (1-\text{sgn}(\gamma))\|q(0)+[\![ d_A\partial_{\nu} \vartheta_0]\!]\|_{W^{2-6/p}_p(\M)}.
\end{align*}

(i) Regularity of $g_1$ and $q_1$.
\smallskip\\
Case 1: $\gamma\equiv 0$. 
It is not hard to check that
\begin{align}
\label{S5: reg of k,d,gam,l}
l_1, \gamma_1\in \fb,\quad
\kappa_A\in W^1_p(\Omega\setminus\M),\hspace{.5em} d_A\in W^{2-2/p}_p(\Omega\setminus\M),\hspace{.5em} l_A\in W^{2-3/p}_p(\M).
\end{align}
Indeed, the first term can be obtained as follows. 
$\vartheta_A|_\M\in W^{2-3/p}_p(\M)$ and the $L_p$-maximal regularity of $\Delta_\M$ imply that 
\begin{align*}
e^{\Delta_\M t} \vartheta_A \in H^1_p(J; W^{-1/p}_p(\M))\cap L_p(J; W^{2-1/p}_p(\M)).
\end{align*}
By \cite[Proposition~3.2]{MeySch12}, we infer that 
$e^{\Delta_\M t} \vartheta_A \in \fb.$ Now the first term in \eqref{S5: reg of k,d,gam,l} follows from the regularity of $l,\gamma$.
Similar to \eqref{S5: reg of k,d,gam,l}, one checks that
\begin{align}
\label{S5: reg of k,d,gam,l-0}
\hat{l}_1, \hat{\gamma}_1\in \fb, \hspace{.5em}\hat{\kappa}_0\in W^1_p(\Omega\setminus\M),\hspace{.5em}\hat{d}_0\in W^{2-2/p}_p(\Omega\setminus\M),\hspace{.5em} \hat{l}_0\in W^{2-3/p}_p(\M).
\end{align}
In \cite{Shao13}, an analysis of the structure of the mean curvature operator ${\cH}(h)$ and $\beta(h)$ is obtained, which shows that ${\cH}(h)$ is a rational function in the height function $h$ and its spatial derivatives up to second order, while $\beta(h)$ is a rational function in $h$ and its first order spatial derivatives. On account of the embeddings $W^{3-3/p}_p(\M)\hookrightarrow C(\M)$ and $ W^{2-3/p}_p(\M)\hookrightarrow C(\M)$, we conclude from \cite[Theorem~2.8.3]{Trie78} and a similar argument to \cite[Proposition~2.7]{ShaoSim13} that $W^{3-3/p}_p(\M)$ and $W^{2-3/p}_p(\M)$ are multiplication algebras under point-wise multiplication. Since
$$[x\mapsto x^a]\in C^{\omega}((0,\infty)),\quad a\in\R,$$
well-known results for substitution operators for Sobolev-Slobodeckii spaces imply that
\begin{align}
\label{S5: reg of H & beta}
\beta(\hat{h}_0) \in W^{3-3/p}_p(\M),\quad {\cH}(\hat{h}_0)\in W^{2-3/p}_p(\M).
\end{align}
\eqref{S5: reg of k,d,gam,l}-\eqref{S5: reg of H & beta} then imply  
\begin{align}
\label{S5: reg of G(z0)}
G(\hat{z}_0)\in W^{2-3/p}_p(\M),
\end{align}
and a similar argument yields 
\begin{align}
\label{S5: reg of Q(z0)}
Q(\hat{z}_0)\in W^{1-3/p}_p(\M).
\end{align}
Together with the $L_p$-maximal regularity of $\Delta_\M$ and \cite[Proposition~3.2]{MeySch12}, we conclude from \eqref{S5: reg of G(z0)} and \eqref{S5: reg of Q(z0)} that
\begin{align}
\label{S5: reg of g_1,q_1}
g_1\in\gb\hookrightarrow \fb,\hspace{1em} q_1\in\gc\hookrightarrow \fc.
\end{align}
Case 2: $\gamma>0$.
Based on the discussion in (i), it suffices to show the regularity of $\partial_t h(0)$, which is defined in \eqref{S5: replace h_t}. As illustrated in (i),  we have 
$$\beta(\hat{h}_0)\hat{\gamma}_0, ([\![\psi(\hat{\vartheta}_0)]\!] +\sigma{\cH}(\hat{h}_0))\in W^{2-3/p}_p(\M).$$ 
It implies that $\partial_t h(0)\in W^{2-3/p}_p(\M)$. It yields the desired results, i.e.,  
$$g_1\in \fb,\hspace{1em} q_1\in \fc.$$
In virtue of condition ($\mathcal{CC}$) and the definitions of $g_1$ and $q_1$, one checks that condition ($\mathcal{LCC}$) is at our disposal.
Therefore, all the compatibility conditions in \cite[Theorems~3.3, 3.5]{PruSimZac12} are satisfied, and then there exists a unique solution $z^* =(\vartheta^* ,h^* )\in\ej$ to the linear system \eqref{S5: reducing initial data}. 
\begin{remark}
The compatibility condition $[\![d(\theta_0)\partial_{\nu_{\Gamma_0}} \theta_0]\!]\in W^{2-6/p}_p(\Gamma_0)$ in Theorem~\ref{S1: main theorem: gamma=0} implies that 
$$[\![ \hat{d}_0 \partial_\nu \hat{\vartheta}_0]\!] - ([\![ \hat{d}_0 \nabla \hat{\vartheta}_0]\!]| M_4(\hat{h}_0) \nabla_\Sigma \hat{h}_0) \in W^{2-6/p}_p(\Sigma) , $$
which is equivalent to
$$([\![ \hat{d}_0 \nabla \hat{\vartheta}_0]\!]| \nu_\Sigma - M_4(\hat{h}_0) \nabla_\Sigma \hat{h}_0) \in W^{2-6/p}_p(\Sigma) .$$
From the above discussion, 
we infer that
$$[\![ \nabla \hat{\vartheta}_0]\!] \in  W^{2-6/p}_p(\Sigma, \R^{m+1}) . $$
Now based on this observation and \eqref{S5: reg of k,d,gam,l-0}, we conclude that
$$Q(\hat{z}_0)+[\![ d_A\partial_{\nu} \hat{\vartheta}_0]\!]\in W^{2-6/p}_p(\M). $$
The other two conditions in conditions ($\mathcal{CC}$), i.e.,
when $\gamma\equiv 0$
$$\partial_{\nu_\Omega}\hat{\vartheta}_0=0,\quad l_1(0)\hat{\vartheta}_0+\sigma_0\Delta_\M \hat{h}_0=G(\hat{z}_0),$$
and when $\gamma> 0$
$$ \partial_{\nu_\Omega}\hat{\vartheta}_0=0,\quad l_A l_1(0)\hat{\vartheta}_0 +l_A\sigma_0\Delta_\M \hat{h}_0 -\gamma_1(0)[\![ d_A\partial_{\nu} \hat{\vartheta}_0]\!]=\gamma_1(0)Q(\hat{z}_0)+l_A G(\hat{z}_0).$$
can be easily obtained from the remaining compatibility conditions in Theorems~\ref{S1: main theorem: gamma=0} and \ref{S1: main theorem: gamma>0}. Similarly, the compatibility conditions in Theorems~\ref{S1: main theorem: gamma=0} and \ref{S1: main theorem: gamma>0} can also be concluded from conditions ($\mathcal{CC}$). By  \cite[Theorems~3.1, 3.2]{PruSimZac12}, problem \eqref{S5: Stefan-linear/nonlinear} with initial data $\hat{z}_0$ has a unique $L_p$-solution on some possibly small but nontrivial interval $J:=[0,T]$, which is denoted by $\hat{z}=(\hat{\vartheta},\hat{h})$. 
\qed
\end{remark}

Next, we apply the parameter-dependent diffeomorphisms $\Tlme$ and $\Tlm$ to show the analyticity of the solution $z^* $. We will use the following useful fact that for any time-independent map $\mathcal{N}$ acting on $\F^s(\Omega_i,E)$
$$\Tlme\mathcal{N}=\Tue\mathcal{N}\Tuei \Tlme  ,$$
and a similar result also holds for $\Tlm$.
By Proposition~\ref{S3: Omega-invariant spaces}, we have
\begin{align*}
\partial_t [\vartheta^*_{\lambda,\mu,\eta}]&=(1+\xi^\prime\lambda)\Tlme \partial_t\vartheta^* + B_{\lambda,\mu,\eta}(\vartheta^*_{\lambda,\mu,\eta})\\
&=(1+\xi^\prime\lambda)\Tlme (d_A/\kappa_A)\Delta \vartheta^*  +B_{\lambda,\mu,\eta}(\vartheta^*_{\lambda,\mu,\eta})\\
&=(1+\xi^\prime\lambda) (d_{A,\lambda,\mu,\eta}/\kappa_{A,\lambda,\mu,\eta})\Tue\Delta\Tuei \vartheta^*_{\lambda,\mu,\eta}+B_{\lambda,\mu,\eta}(\vartheta^*_{\lambda,\mu,\eta}),
\end{align*}
and either by Proposition~\ref{S3: normal der}
\begin{align*}
\partial_t [h^*_{\lambda,\mu}]&=(1+\xi^{\prime}\lambda)\Tlm \partial_t h^* +B_{\lambda,\mu}(h^*_{\lambda,\mu})\\
&=(1+\xi^{\prime}\lambda)\Tlm [(q_1+[\![d_A\partial_\nu \vartheta^* ]\!])/l_A] +B_{\lambda,\mu}(h^*_{\lambda,\mu})\\
&=(1+\xi^{\prime}\lambda)\{(q_{1,\lambda,\mu}/l_{A,\lambda,\mu})+[\![(d_{A,\lambda,\mu}/l_{A,\lambda,\mu}) \partial_\nu \vartheta^*_{\lambda,\mu}]\!] \}
+B_{\lambda,\mu}(h^*_{\lambda,\mu}),
\end{align*}
when $\gamma\equiv 0$, or when $\gamma>0$, we have
\begin{align*}
\partial_t [h^*_{\lambda,\mu}]&=(1+\xi^{\prime}\lambda)\Tlm [(l_1 \vartheta^* +\sigma_0\Delta_\M h^* -g_1 )/\gamma_1] +B_{\lambda,\mu}(h^*_{\lambda,\mu})\\
&=(1+\xi^{\prime}\lambda)(l_{1,\lambda,\mu} \vartheta^*_{\lambda,\mu} +\sigma_0\Tu\Delta_\M\Tui h^*_{\lambda,\mu}- g_{1,\lambda,\mu} )/( \gamma_{1,\lambda,\mu})
+B_{\lambda,\mu}(h^*_{\lambda,\mu}).
\end{align*}
We define a map $\Phi:\ej\times\Br\rightarrow \fj$: $((\vartheta,h),(\lambda,\mu,\eta))\mapsto$
\begin{align*}
\begin{cases}
\kappa_{A,\lambda,\mu,\eta}\partial_t \vartheta -(1+\xi^\prime\lambda) d_{A,\lambda,\mu,\eta}\Tue\Delta\Tuei \vartheta-\kappa_{A,\lambda,\mu,\eta} B_{\lambda,\mu,\eta}(\vartheta)
\hspace{2em} &\text{in }\quad \Omega\setminus\M,\\
(1+{\rm sgn}(\gamma) \xi^{\prime}\lambda)(l_{1,\lambda,\mu} \vartheta +\sigma_0\Tu\Delta_\M\Tui h-g_{1,\lambda,\mu})-\gamma_{1,\lambda,\mu}\partial_t h\\
\quad\quad +\gamma_{1,\lambda,\mu} B_{\lambda,\mu}(h) &\text{on }\quad\M,\\
l_{A,\lambda,\mu}\partial_t h-(1+\xi^{\prime}\lambda)(q_{1,\lambda,\mu}+[\![d_{A,\lambda,\mu} \partial_\nu \vartheta]\!]) -l_{A,\lambda,\mu}B_{\lambda,\mu}(h) &\text{on }\quad\M,\\
\vartheta(0)-\hat{\vartheta}_0 &\text{in }\quad\Omega\setminus\M,\\
h(0)-\hat{h}_0 &\text{on }\quad\M.
\end{cases}
\end{align*}
Setting $z^*_{\lambda,\mu,\eta}=(\vartheta^*_{\lambda,\mu,\eta},h^*_{\lambda,\mu})$, note that $\Phi(z^*_{\lambda,\mu,\eta},(\lambda,\mu,\eta))=0$ for all $(\lambda,\mu,\eta)\in\Br$. 

(ii) We need to show that $\Phi$ actually maps into $\fj$. For simplification, we set $\Phi(z,(\lambda,\mu,\eta))=(f_2,g_2,q_2,(\vartheta_0,h_0))^{\sf T}$. It is obvious that based on trace theory of anisotropic Sobolev-Slobodeckii spaces,
see \cite[Section~2]{DenPruZac08},
\begin{align*}
(\vartheta_0, h_0) \in \fd.
\end{align*}
By means of Propositions~\ref{S3: Omega-Tue} and \ref{S3: Omega-invariant spaces}, we infer that $f_2 \in \fa$.
For the regularity of the next two terms, we split into two cases as before.
When $\gamma\equiv 0$, observe that $\Delta_\M \in \L(\eb,\fb)$. Proposition~\ref{S3: Omega: invariant spaces 2}, \eqref{S5: reg of k,d,gam,l} and \eqref{S5: reg of g_1,q_1} yield 
\begin{align}
\label{S5: reg of trans of g_1,q_1,l_1,gam_1}
\gamma_{1,\lambda,\mu}\, ,\, l_{1,\lambda,\mu}\, ,\, g_{1,\lambda,\mu}\in \fb, \hspace{1em}  q_{1,\lambda,\mu}\in\fc.
\end{align}
Taking into consideration that $\partial_t h\in \fc$ and the fact that $\fb,\fc$ are multiplication algebras, we conclude from Propositions~\ref{S3：M-reg},~\ref{S3: Omega-invariant spaces} and \eqref{S5: reg of trans of g_1,q_1,l_1,gam_1} that
\begin{align*}
g_2 \in \fb,\hspace{1em} q_2\in \fc.
\end{align*}
When $\gamma>0$, since $\partial_t h\in\fb$, the desired regularity results clearly hold true.
It remains to show condition ($\mathcal{LCC}$). 
\smallskip\\
Case 1: $\gamma\equiv 0$. It is immediate that $\partial_{\nu_\Omega} \vartheta_0 = \partial_{\nu_\Omega}(\vartheta(0)-\hat{\vartheta}_0)=0$. One checks that
\begin{align*}
&\quad l_1(0)\vartheta_0+\sigma_0\Delta_\M h_0 = l_1(0)(\vartheta(0)-\hat{\vartheta}_0)+\sigma_0\Delta_\M (h(0)-\hat{h}_0)\\
&=l_{1,\lambda,\mu}(0)\vartheta(0)+\sigma\Tu(0)\Delta_\M \Tui(0)h(0)-g_{1,\lambda,\mu}(0)=g_2(0),
\end{align*} 
by recalling $u_{\lambda,\mu}(0,\cdot)=u(0,\cdot)$ for any $u\in E^\Sigma$, and
\begin{align*}
q_2(0)+[\![ d_{A,\lambda,\mu }(0) \partial_\nu \vartheta_0]\!]&= l_{A,\lambda,\mu }(0)\partial_t h(0) -q_{1,\lambda,\mu }(0)-[\![ d_A \partial_\nu  \vartheta(0)]\!] \\
&\quad -l_{A,\lambda,\mu }(0)B_{\lambda,\mu}(h(0)) +[\![ d_A\partial_\nu (\vartheta(0)- \hat{\vartheta}_0)]\!]\\
&=l_{A}\partial_t h(0)-q_{1}(0) -l_{A}B_{\lambda,\mu}(h(0))-[\![ d_A\partial_\nu \hat{\vartheta}_0]\!].
\end{align*}
By the discussion in (ii), $q_{1}(0) +[\![ d_A\partial_\nu \hat{\vartheta}_0]\!]\in W^{2-6/p}_p(\M)$. Since $W^{2-6/p}_p(\M)$ is a multiplication algebra, \eqref{S3: analyticity of Blm}, \eqref{S5: reg of k,d,gam,l} and \cite[formula~(3.1)]{PruSimZac12} imply that 
\begin{align*}
q_2(0)+[\![ d_{A,\lambda,\mu }(0) \partial_\nu \vartheta_0]\!]\in W^{2-6/p}_p(\M).
\end{align*}
Case 2: $\gamma>0$.
\begin{align*}
&\quad l_A l_1(0)\vartheta_0 + l_A\sigma_0 \Delta_\M h_0 -\gamma_1(0)[\![ d_A \partial_\nu \vartheta_0 ]\!]\\
&=l_{A,\lambda,\mu}(0) l_{1,\lambda,\mu}(0)(\vartheta(0)-\hat{\vartheta}_0)+ l_{A,\lambda,\mu}(0)\sigma_0 \Tu(0)\Delta_\M \Tui(0)(h(0)-\hat{h}_0)\\
&\quad -\gamma_{1,\lambda,\mu}(0)[\![ d_{A,\lambda,\mu}  \partial_\nu  (\vartheta(0)-\hat{\vartheta}_0) ]\!]\\
&=l_A l_1(0) \vartheta(0) + l_A\sigma_0 \Delta_\M  h(0) -\gamma_1(0)[\![ d_A \partial_\nu \vartheta(0) ]\!] 
 -l_A g_1(0) -\gamma_1(0)q_1(0)\\
&=l_A g_2(0) +\gamma_1(0)q_2(0).
\end{align*}
Therefore $\Phi(z,(\lambda,\mu,\eta))\in\fj$.

(iii) Let $w=(u,\rho)\in\ej$. The Fr\`echet derivative of $\Phi$ with respect to $(\vartheta,h)$ at $(z^* ,0)$ is clearly given by 
\begin{align*}
D_1 \Phi(z^* ,0)w=
\left\{\begin{aligned}
&\kappa_A\partial_t u -d_A\Delta u &&\text{in}&& \Omega\setminus\M,\\
&l_1 u +\sigma_0\Delta_\M\rho -\gamma_1\partial_t \rho &&\text{on}&& \M,\\
&l_A \partial_t \rho -[\![d_A\partial_\nu u]\!]   &&\text{on}&& \M,\\
&u(0) &&\text{in}&& \Omega\setminus\M,\\
&\rho(0) &&\text{on}&& \M.
\end{aligned}\right.
\end{align*}
It is obvious that
\begin{align*}
D_1 \Phi(z^* ,0)w\in \fa\times\fb\times\fc\times\fd.
\end{align*}
Condition ($\mathcal{LCC}$) can be verified as in (ii) by using \cite[formula~(3.1)]{PruSimZac12} in the case $\gamma\equiv 0$. We can deduce from \cite[Theorems~3.3, 3.5]{PruSimZac12} that
\begin{align*}
D_1 \Phi(z^* ,0) \in \Lis(\ej,\fj).
\end{align*}
\smallskip\\
(iv) Regularity of $[(\lambda,\mu)\mapsto (g_{1,\lambda,\mu},q_{1,\lambda,\mu})]$. We express $g_{1,\lambda,\mu}$ as
\begin{align*}
g_{1,\lambda,\mu}&=\Tlm e^{\Delta_\M t}G(\hat{z}_0)=\Tlm(c+\Delta_\M)e^{\Delta_\M t}(c+\Delta_\M)^{-1}G(\hat{z}_0)\\
&=\Tu (c+\Delta_\M)\Tui \Tlm e^{\Delta_\M t}(c+\Delta_\M)^{-1}G(\hat{z}_0).
\end{align*}
For sufficiently large $c$, it is well-known that $(c+\Delta_\M)$ is an isomorphism from $W^{s+2}_p(\M)$ to $W^s_p(\M)$ for any $s\in \R$. We consider the solution to
\begin{align}
\label{S5: eq of g_1}
\partial_t \rho+ \Delta_\M \rho=0, \quad \rho(0)=(c+\Delta_\M)^{-1} G(\hat{z}_0).
\end{align}
$\hat{\rho}:=e^{\Delta_\M t}(c+\Delta_\M)^{-1}G(\hat{z}_0)$ is the unique solution to \eqref{S5: eq of g_1}. Furthermore, Proposition~\ref{S3: Omega-invariant spaces} shows that $\hat{\rho}_{\lambda,\mu}$ satisfies
\begin{align*}
\partial_t [\hat{\rho}_{\lambda,\mu}]&= (1+\xi^\prime \lambda)\Tlm \partial_t \hat{\rho} + B_{\lambda,\mu}(\hat{\rho}_{\lambda,\mu})\\
&= -(1+\xi^\prime \lambda)\Tu\Delta_\M \Tui \hat{\rho}_{\lambda,\mu} + B_{\lambda,\mu}(\hat{\rho}_{\lambda,\mu}).
\end{align*}
We define the map $\Phi_g: \mathbb{X}_2(J) \times \Br \rightarrow L_p(J;W^{2-1/p}_p(\M))\times W^{4-3/p}_p(\M)$ by
\begin{align*}
\Phi_g(\rho,(\lambda,\mu))=\binom{\partial_t \rho +(1+\xi^\prime \lambda)\Tu\Delta_\M \Tui \rho -B_{\lambda,\mu}(\rho)}{\rho(0)-(c+\Delta_\M)^{-1} G(\hat{z}_0)}.
\end{align*}
Here $\mathbb{X}_2(J):=H^1_p(J; W^{2-1/p}(\M))\cap L_p(J; W^{4-1/p}(\M))$.
Note that $\Phi_g(\hat{\rho}_{\lambda,\mu}, (\lambda,\mu))=(0,0)^{\sf T}.$
By the $L_p$-maximal regularity of $\Delta_\M$, we immediately have
\begin{align*}
D_1\Phi_g(\hat{\rho},0)\in \Lis(\mathbb{X}_2(J), L_p(J;W^{2-1/p}_p(\M))\times W^{4-3/p}_p(\M)).
\end{align*}
We define a bilinear and continuous map 
\begin{align*}
T:  C(J;\L(W^{4-1/p}_p(\M), W^{3-1/p}_p(\M)))\times \mathbb{X}_2(J)\rightarrow L_p(J;W^{2-1/p}_p(\M))
\end{align*}
by $(B,u)\mapsto [t\mapsto B(t)u(t)]$. Since $T$ is real analytic, by \eqref{S3: analyticity of Blm}, we get
\begin{align*}
[(u,(\lambda,\mu))\mapsto B_{\lambda,\mu}(u)]\in C^\omega (\mathbb{X}_2 \times \Br, L_p(J;W^{2-1/p}_p(\M))). 
\end{align*}
In virtue of Proposition~\ref{S3：M-reg}(b) and the above bilinear map argument, one gets
\begin{align*}
\Phi_g \in C^\omega (\mathbb{X}_2 \times \Br, L_p(J;W^{2-1/p}_p(\M))\times W^{4-3/p}_p(\M)).
\end{align*}
By the implicit function theorem, there exists some $\mathbb{B}(0,r_1)\subset\Br$ such that 
\begin{align*}
[(\lambda,\mu)\mapsto \Tlm e^{\Delta_\M t}(c+\Delta_\M)^{-1}G(\hat{z}_0)] \in C^\omega(\mathbb{B}(0,r_1), \mathbb{X}_2).
\end{align*}
Without loss of generality, we may assume that $r_1=r_0$. In view of Proposition~\ref{S3：M-reg}(b), we have that for all $s\in\R$  
\begin{align*}
[\mu\mapsto \Tu (c+\Delta_\M)\Tui]\in C^{\omega}(\Br, C^1(J; \L(W^{s+2}_p(\M), W^s_p(\M)))).
\end{align*}
The above bilinear map argument and the embedding $\gb\hookrightarrow \fb$ yield 
\begin{align}
\label{S5: ana of g_1-trans}
[(\lambda,\mu)\mapsto g_{1,\lambda,\mu}] \in C^{\omega} (\Br, \fb).
\end{align}
Following a similar discussion, one obtains
\begin{align}
\label{S5: ana of q_1-trans}
[(\lambda,\mu)\mapsto q_{1,\lambda,\mu}] \in C^{\omega} (\Br, \fc).
\end{align}
\smallskip\\
(v) Regularity of the map $\Phi$. It follows from Proposition~\ref{S3: Omega-Tue} that
\begin{align}
\label{S5: ana of ka,d-trans}
[(\lambda,\mu,\eta)\mapsto (\kappa_{A,\lambda,\mu,\eta}, d_{A,\lambda,\mu,\eta})] \in C^k (\Br, C^1(J; BC(\Omega))^2).
\end{align}
Proposition~\ref{S3：M-reg} implies that  
\begin{align}
\label{S5: ana of l,d-trans}
[(\lambda,\mu)\mapsto (l_{A,\lambda,\mu}, d_{A,\lambda,\mu})] \in C^k (\Br, C^1(J; BC^1(\M))^2).
\end{align}
Similarly, we have
\begin{align}
\label{S5: ana of l_1,ga_1-trans}
[(\lambda,\mu)\mapsto (l_{1,\lambda,\mu}, \gamma_{1,\lambda,\mu})] \in C^k (\Br, (C^1(J; BC(\M)) \cap C(J; BC^2(\M)))^2).
\end{align}
By Proposition~\ref{S3: Omega-Tue}, one concludes that 
$$[(\mu,\eta)\mapsto \Tue \Delta \Tuei]\in C^\omega (\Br, C^1(J; \L(H^2_p(\Omega\setminus \M), L_p(\Omega))) ).$$
Combined with \eqref{S5: ana of ka,d-trans}, this yields
$$[(\lambda,\mu,\eta)\mapsto d_{A,\lambda,\mu,\eta}\Tue \Delta \Tuei] \in C^k (\Br, \L(\ea,\fa) ).$$
It follows again from \eqref{S5: ana of ka,d-trans} that
$$[(\vartheta, (\lambda,\mu,\eta))\mapsto \kappa_{A,\lambda,\mu,\eta}\partial_t \vartheta]\in C^k (\ea\times\Br, \fa).$$
Together with Proposition~\ref{S3: Omega-invariant spaces}, the above discussion shows that
\begin{align*}
[(\vartheta,(\lambda,\mu,\eta))& \mapsto \kappa_{A,\lambda,\mu,\eta}\partial_t \vartheta -(1+\xi^\prime\lambda) d_{A,\lambda,\mu,\eta}\Tue\Delta\Tuei \vartheta-B_{\lambda,\mu,\eta}(\vartheta)]\\
& \in C^k(\ea\times\Br,\fa).
\end{align*}
Applying \eqref{S5: ana of g_1-trans}-\eqref{S5: ana of l_1,ga_1-trans}, one can check the regularity of all the other entries of $\Phi$, and thus
$$\Phi\in C^k(\ej\times\Br,\fj).$$
Now the implicit function theorem yields
\begin{align*}
[(\lambda,\mu,\eta)\mapsto z^*_{\lambda,\mu,\eta}]\in C^k(\Br, \ej).
\end{align*}
As a conclusion of Theorem~\ref{S3: main thm}, we obtain
\begin{align*}
z^* \in C^k(\mathring{J}\times \T_{a/3}\!\setminus\!\M) \times C^k (\mathring{J}\times\M).
\end{align*}
To attain the regularity of $\vartheta^* $ in $\Omega\setminus \bar{\T}_{a/6}$, we study the linear equations
\begin{equation*}
\left\{\begin{aligned}
\kappa_1(\vartheta_A) \partial_t \vartheta -d_1(\vartheta_A)\Delta\vartheta&=0 &&\text{in}&&\Omega_1\setminus\bar{\T}_{a/6}\\
\vartheta&=\vartheta^* &&\text{on}&&\partial_1\T_{a/6}\\
\vartheta(0)&=\hat{\vartheta}_0 &&\text{in}&&\Omega_1\setminus\bar{\T}_{a/6},
\end{aligned}\right.
\end{equation*}
and
\begin{equation*}
\left\{\begin{aligned}
\kappa_2(\vartheta_A) \partial_t \vartheta -d_2(\vartheta_A)\Delta\vartheta&=0 &&\text{in}&&\Omega_2\setminus\bar{\T}_{a/6}\\
\partial_{\nu_\Omega} \vartheta &=0 &&\text{on}&&\partial \Omega \\
\vartheta&=\vartheta^* &&\text{on}&&\partial_2\T_{a/6}\\
\vartheta(0)&=\hat{\vartheta}_0 &&\text{in}&&\Omega_2\setminus\bar{\T}_{a/6},
\end{aligned}\right.
\end{equation*}
where $\partial_i\T_{a/6}:=\Lambda(\M,(-1)^ia/6)$. Since $\vartheta^*$ is analytic on $\partial_i\T_{a/6}$, we can obtain the regularity of  $\vartheta^*$ in these two domains by means of the parameter-dependent diffeomorphism defined in \cite{EscPruSim03}, the results in \cite[Section~8]{DenHiePru03}, and the implicit function theorem as above. 
To sum up, we have established the analyticity of the solution $z^* $, i.e.,
\begin{align}
\label{S5: ana of z*}
z^* \in C^k(\mathring{J}\times \Omega\setminus\M) \times C^k (\mathring{J}\times\M).
\end{align}
 
\subsection{\bf Reduction to zero initial data}
Recall that $\hat{z}=(\hat{\vartheta},\hat{h})$ is the unique $L_p$ solution to problem \eqref{S5: Stefan-linear/nonlinear} with initial data $\hat{z}_0$.
We set $\bar{\vartheta}=\hat{\vartheta}-\vartheta^*$, $\bar{h}=\hat{h}-h^*$. 
Then $\bar{z}=(\bar{\vartheta},\bar{h})$ is the unique solution to the following equation.
\begin{equation}
\label{S5: equation-zero intial}
\left\{\begin{aligned}
\kappa_A \partial_t \vartheta -d_A\Delta \vartheta&=F(\vartheta+\vartheta^*,h+h^*)
&&\text{in}&&\Omega\setminus\M, \\
\partial_{\nu_\Omega} \vartheta&=0 &&\text{on}&&\partial \Omega,\\
[\![\vartheta]\!]&=0 &&\text{on}&& \M,\\
l_1 \vartheta + \sigma_0 \Delta_\M h- \gamma_1\partial_t h
&=\bar{G}(\vartheta,h;\vartheta^* ,h^* ) &&\text{on}&& \M,\\
l_A \partial_t h -[\![d_A\partial_\nu \vartheta]\!]
&=\bar{Q}(\vartheta,h; \vartheta^* ,h^* ) &&\text{on}&& \M,\\
\vartheta(0)&=0 &&\text{in}&&\Omega\setminus\M, \\
h(0)&=0 &&\text{on}&& \M.
\end{aligned}\right.
\end{equation}
Here we have set
\begin{equation*}
\begin{split}
&\bar{G}(\vartheta,h;\vartheta^* ,h^*)=G(\vartheta+\vartheta^* ,h+h^* )-e^{\Delta_\M t}G(\hat{z}_0),\\
&\bar{Q}(\vartheta,h;\vartheta^* ,h^*)
= Q(\vartheta+\vartheta^* ,h+h^* )-e^{\Delta_\M t}Q(\hat{z}_0).\\
\end{split}
\end{equation*}
Note that $\bar{G}(0,0;\hat{\vartheta}_0,\hat{h}_0)=\bar{Q}(0,0;\hat{\vartheta}_0,\hat{h}_0)=0$
by construction, which ensures time trace zero at $t=0$. 

Let $z^*_{\lambda,\mu,\eta}:=(\vartheta^*_{\lambda,\mu,\eta},h^*_{\lambda,\mu})$.
As in Section~5.1, we compute the temporal derivative of $\bar{z}_{\lambda,\mu,\eta}:=(\bar{\vartheta}_{\lambda,\mu,\eta},\bar{h}_{\lambda,\mu})$ as follows.
\begin{align*}
\partial_t[\bar{\vartheta}_{\lambda,\mu,\eta}]&=(1+\xi^\prime \lambda)(d_{A,\lambda,\mu,\eta}/\kappa_{A,\lambda,\mu,\eta})\Tue\Delta\Tuei \bar{\vartheta}_{\lambda,\mu,\eta } +B_{\lambda,\mu,\eta}(\bar{\vartheta}_{\lambda,\mu,\eta})\\
&\quad +F_{\lambda,\mu,\eta}(\bar{z}_{\lambda,\mu,\eta},z^*_{\lambda,\mu,\eta})/\kappa_{A,\lambda,\mu,\eta},
\end{align*}
and either 
\begin{align*}
\partial_t[\bar{h}_{\lambda,\mu}]=(1+\xi^\prime \lambda)[\![ (d_{A,\lambda,\mu}/l_{A,\lambda,\mu})\partial_\nu \bar{\vartheta}]\!] + B_{\lambda,\mu}(\bar{h}_{\lambda,\mu})+\bar{Q}_{\lambda,\mu}(\bar{z}_{\lambda,\mu,\eta},z^*_{\lambda,\mu,\eta})/l_{A,\lambda,\mu},
\end{align*}
when $\gamma\equiv 0$, or when $\gamma>0$ we have
\begin{align*}
\partial_t[\bar{h}_{\lambda,\mu}]&=(1+\xi^\prime \lambda)(l_{1,\lambda,\mu}\bar{\vartheta}_{\lambda,\mu} +\sigma_0\Tu\Delta_\M \Tui \bar{h}_{\lambda,\mu})/\gamma_{1,\lambda,\mu}
+B_{\lambda,\mu}(\bar{h}_{\lambda,\mu})\\
&\quad -\bar{G}_{\lambda,\mu}(\bar{z}_{\lambda,\mu,\eta},z^*_{\lambda,\mu,\eta})/\gamma_{1,\lambda,\mu}.
\end{align*}
For sufficiently small $r_0$, 
$\Theta_{\mu,\eta}(z)=z$ for $(\mu,\eta)\in \Br$ and $z\in\Omega\setminus\T_{a/3}$. Hence, $(\zeta\circ d_\M)_{\lambda,\mu,\eta}=\zeta\circ d_\M$.
In virtue of $(\zeta\circ d_\M)_{\lambda,\mu,\eta}(x)=0$ for $x\notin\Omega\setminus\T_{2a/3}$, and $(h\circ \Pi)_{\lambda,\mu,\eta}(x)=(h_{\lambda,\mu}\circ \Pi)(x)$ for $x\in \T_{2a/3}$, 
one readily verifies that
\begin{align*}
\Tlme \Upsilon(h)&=(\zeta\circ d_\M)_{\lambda,\mu,\eta}(h_{\lambda,\mu}\circ\Pi)_{\lambda,\mu,\eta}(\nu_\M\circ \Pi)_{\lambda,\mu,\eta}\\
&= (\zeta\circ d_\M)(h_{\lambda,\mu}\circ\Pi)(\nu_\M\circ \Pi)_{\lambda,\mu,\eta}.
\end{align*}
In the above expressions, for $z=(\vartheta,h)$,
\begin{align*}
F_{\lambda,\mu,\eta}(z,z^{*}_{\lambda,\mu,\eta})&=(1+\xi^\prime \lambda)\Tlme F(\Tlmei(z+z^*_{\lambda,\mu,\eta}))\\
&= (\kappa_{A,\lambda,\mu,\eta}-\kappa(\vartheta+\vartheta^*_{\lambda,\mu,\eta}))[\partial_t(\vartheta+\vartheta^*_{\lambda,\mu,\eta})-B_{\lambda,\mu,\eta}(\vartheta+\vartheta^*_{\lambda,\mu,\eta})]\\
&\quad + (1+\xi^\prime \lambda)\{(d(\vartheta+\vartheta^*_{\lambda,\mu,\eta})-d_{A,\lambda,\mu,\eta})\Tue\Delta \Tuei(\vartheta+\vartheta^*_{\lambda,\mu,\eta})\\
&\quad -d(\vartheta+\vartheta^*_{\lambda,\mu,\eta})M_{2,\lambda,\mu,\eta}(h+h^*_{\lambda,\mu}):\Tue\nabla^2 \Tuei (\vartheta+\vartheta^*_{\lambda,\mu,\eta})\\
&\quad +d^\prime(\vartheta+\vartheta^*_{\lambda,\mu,\eta})|(I- M_{1,\lambda,\mu,\eta}(h +h^*_{\lambda,\mu}))\Tue\nabla \Tuei (\vartheta+\vartheta^*_{\lambda,\mu,\eta})|^2\\
&\quad -d(\vartheta+\vartheta^*_{\lambda,\mu,\eta})(M_{3,\lambda,\mu,\eta}(h +h^*_{\lambda,\mu})| \Tue\nabla \Tuei (\vartheta+\vartheta^*_{\lambda,\mu,\eta}))\\
&\quad + \kappa(\vartheta+\vartheta^*_{\lambda,\mu,\eta})\mathcal{R}_{\lambda,\mu,\eta}({h}+h^*_{\lambda,\mu})(\vartheta+\vartheta^*_{\lambda,\mu,\eta})\},
\end{align*}
with 
\begin{align*}
\begin{split}
\Upsilon_{\lambda,\mu,\eta}(h)&=\Tlme \Upsilon(\Tlmi h)=(\zeta\circ d_\M) (h\circ \Pi) (\nu_\M \circ\Pi)_{\lambda,\mu,\eta}\\
M_{1,\lambda,\mu,\eta}(h)&=\Tlme M_1(\Tlmi h)\\
&=[(I+ (\Tue\nabla\Tuei \Upsilon_{\lambda,\mu,\eta}(h))^{\sf T})^{-1} (\Tue\nabla\Tuei \Upsilon_{\lambda,\mu,\eta}(h))^{\sf T}]^{\sf T},\\
M_{2,\lambda,\mu,\eta}(h)&=\Tlme M_2(\Tlmi h)\\
&=M_{1,\lambda,\mu,\eta}(h)+(M_{1,\lambda,\mu,\eta}(h))^{\sf T} -M_{1,\lambda,\mu,\eta}(h)(M_{1,\lambda,\mu,\eta}(h))^{\sf T},\\
M_{3,\lambda,\mu,\eta}(h)&=\Tlme M_3(\Tlmi h)=(I-M_{1,\lambda,\mu,\eta}(h)): \Tue\nabla\Tuei M_{1,\lambda,\mu,\eta}(h),\\
\mathcal{R}_{\lambda,\mu,\eta}(h)\vartheta&=\Tlme \mathcal{R}(\Tlmi h)\Tlmei \vartheta\\
&=(\Tue\nabla\Tuei \vartheta |(I+(\Tue\nabla\Tuei \Upsilon_{\lambda,\mu,\eta}(h))^{\sf T})^{-1}\\
&\quad[\partial_t \Upsilon_{\lambda,\mu,\eta}(h)- B_{\lambda,\mu,\eta}(\Upsilon_{\lambda,\mu,\eta}(h))])/(1+\xi^\prime \lambda),
\end{split}
\end{align*}
and 
\begin{align*}
\bar{G}_{\lambda,\mu}(z,&z^*_{\lambda,\mu,\eta})=(1+\xi^\prime \lambda)\Tlm \bar{G}(\Tlmi(z+z^*_{\lambda,\mu,\eta}))\\
&=\{ \gamma(\vartheta+\vartheta^*_{\lambda,\mu})\beta_{\lambda,\mu}(h+h^*_{\lambda,\mu})-\gamma_{1,\lambda,\mu}\}[\partial_t({h}+h^*_{\lambda,\mu})-B_{\lambda,\mu}({h}+h^*_{\lambda,\mu})]\\
&\quad  +(1+\xi^\prime \lambda)\{-([\![\psi(\vartheta+\vartheta^*_{\lambda,\mu})]\!]+\sigma{\cH}_{\lambda,\mu}(h+h^*_{\lambda,\mu}))+l_{1,\lambda,\mu}(\vartheta+\vartheta^*_{\lambda,\mu})\\
&\quad  +\sigma_0 \Tu\Delta_\M\Tui ({h}+h^*_{\lambda,\mu}) -g_{1,\lambda,\mu}
\},
\end{align*}
with ${\cH}_{\lambda,\mu}=\Tlm \cH\Tlmi ,$ and
\begin{align*}
\begin{split}
M_{0,\lambda,\mu}(h)&=\Tlm M_0(\Tlmi h)=(I-h L_{\M,\lambda,\mu})^{-1},\hspace{1em}\\
\alpha_{\lambda,\mu}(h)&=\Tlm \alpha(\Tlmi h)=M_{0,\lambda,\mu}(h)\Tu\nabla\Tui h,\\
\beta_{\lambda,\mu}(h)&=\Tlm \beta(\Tlmi h)=(1+|\alpha_{\lambda,\mu}(h)|^2)^{-1/2},
\end{split}
\end{align*}
and 
\begin{align*}
\bar{Q}_{\lambda,\mu}&(z ,z^*_{\lambda,\mu,\eta})=(1+\xi^\prime \lambda)\Tlm \bar{Q}(\Tlmi(z+z^*_{\lambda,\mu,\eta}))\\
&=\gamma(\vartheta+\vartheta^*_{\lambda,\mu})\beta_{\lambda,\mu}(h+h^*_{\lambda,\mu})[\partial_t({h}+h^*_{\lambda,\mu})-B_{\lambda,\mu}({h}+h^*_{\lambda,\mu})]^2/(1+\xi^\prime \lambda)\\
&\quad+(l_{A,\lambda,\mu}-l(\vartheta+\vartheta^*_{\lambda,\mu}))[\partial_t({h}+h^*_{\lambda,\mu})-B_{\lambda,\mu}({h}+h^*_{\lambda,\mu})]\\
&\quad+(1+\xi^\prime \lambda)\{[\![(d(v+\bar{\vartheta}_{\lambda,\mu})-d_{A,\lambda,\mu})\partial_\nu (\vartheta+\vartheta^*_{\lambda,\mu})]\!]-q_{1,\lambda,\mu}\\
&\quad-( [\![d(\vartheta+\vartheta^*_{\lambda,\mu})\Tu\nabla \Tui(\vartheta+\vartheta^*_{\lambda,\mu})]\!]  | M_{4,\lambda,\mu}(h+h^*_{\lambda,\mu})\Tu\nabla_\M \Tui({h}+h^*_{\lambda,\mu})) \}
\end{align*}
with $M_{4,\lambda,\mu}(h)=\Tlm M_4(\Tlmi h)=(I -M_{1,\lambda,\mu}(h ))^{\sf T} M_{0,\lambda,\mu}(h)$.
\smallskip\\
Consider the map $\Phi:\zej\rightarrow \zfj$: $((\vartheta,h),(\lambda,\mu,\eta))\mapsto$
\begin{align*}
\begin{cases}
\kappa_{A,\lambda,\mu,\eta}\partial_t \vartheta -(1+\xi^\prime\lambda) d_{A,\lambda,\mu,\eta}\Tue\Delta\Tuei \vartheta-\kappa_{A,\lambda,\mu,\eta}B_{\lambda,\mu,\eta}(\vartheta)\hspace{3em} &\\
\quad\quad -F_{\lambda,\mu,\eta}(z ,z^*_{\lambda,\mu,\eta}) &\text{in}\hspace{.5em}  \Omega\!\setminus\!\M,\\
(1+{\rm sgn}(\gamma) \xi^{\prime}\lambda)(l_{1,\lambda,\mu} \vartheta +\sigma_0\Tu\Delta_\M\Tui h)-\gamma_{1,\lambda,\mu}\partial_t h \\
\quad\quad +\gamma_{1,\lambda,\mu} B_{\lambda,\mu}(h) - \bar{G}_{\lambda,\mu}(z,z^*_{\lambda,\mu,\eta}) &\text{on}\hspace{.5em} \M,\\
l_{A,\lambda,\mu}\partial_t h-(1+\xi^{\prime}\lambda)( [\![d_{A,\lambda,\mu} \partial_\nu \vartheta]\!]) -l_{A,\lambda,\mu}B_{\lambda,\mu}(h)-\bar{Q}_{\lambda,\mu}(z ,z^*_{\lambda,\mu,\eta}) &\text{on}\hspace{.5em} \M,
\end{cases}
\end{align*}
where $z=(\vartheta,h)$. Note that $\Phi(\bar{z}_{\lambda,\mu},(\lambda,\mu))=0$ for all $(\lambda,\mu)\in\Br$. It is understood that $\zfj:=\fa\times\zfb\times\zfc$.
\smallskip\\
(i) First, we shall check that $\Phi$ actually maps into $\zfj$. Functions in $\zfj$ automatically satisfy ($\mathcal{LCC}$). One can check the temporal traces without difficulty. Indeed, recalling that $z^*_{\lambda,\mu,\eta}(0,\cdot)=z^*(0,\cdot)$, we have
\begin{align*}
&\bar{G}_{\lambda,\mu}(z,z^*_{\lambda,\mu,\eta})(0)=\Tlm G(z^*)(0)-g_1(0)=0,\\
&\bar{Q}_{\lambda,\mu}(z,z^*_{\lambda,\mu,\eta})(0)=\Tlm Q(z^*)(0)-q_1(0)=0.
\end{align*} 
It suffices to check regularity of $F_{\lambda,\mu,\eta}(z ,z^*_{\lambda,\mu,\eta})$, $\bar{G}_{\lambda,\mu}(z,z^*_{\lambda,\mu,\eta})$ and $\bar{Q}_{\lambda,\mu}(z,z^*_{\lambda,\mu,\eta})$, which will become clear in our argument for the regularity of $\Phi$.
\smallskip\\
It follows from  well-known results for substitution operators for Sobolev spaces and \eqref{S5: ana of z*} that
\begin{align*}
[(\vartheta,(\lambda,\mu,\eta))\mapsto \kappa(\vartheta+\vartheta^*_{\lambda,\mu,\eta})]\in C^k (\zea\times\Br, C(J, BC(\Omega))).
\end{align*}
Analogous statements hold for $d(\vartheta+\vartheta^*_{\lambda,\mu,\eta})$, $d^\prime(\vartheta+\vartheta^*_{\lambda,\mu,\eta})$, $l(\vartheta+\vartheta^*_{\lambda,\mu,\eta})$, $\gamma(\vartheta+\vartheta^*_{\lambda,\mu,\eta})$ and $\psi(\vartheta+\vartheta^*_{\lambda,\mu,\eta})$ as well.
\smallskip\\
Adopting the notation in Section~2, we introduce an extension operator 
$$e_\Pi:\eb\rightarrow \mathbb{E}_2(J,\T_a):\quad h\mapsto h\circ\Pi.$$ 
$((\pi_\kappa\circ\Pi)^2)_{\kappa\in\K}$ forms a partition of unity for $\T_a$. For $\F\in\{W_p,H_p\}$, we define
\begin{align*}
\begin{split}
&\mathcal{R}_\Pi^c: \F^s(\T_a, E)\rightarrow \prod\limits_{\kappa\in\K}\F^s(\T_{a,\kappa},E):u\mapsto (u\pi_\kappa\circ\Pi )_\kappa,\\
&\mathcal{R}_\Pi:\prod\limits_{\kappa\in\K}\F^s(\T_{a,\kappa},E)\rightarrow \F^s(\T_a, E): (u_\kappa)_\kappa\mapsto \sum\limits_{\kappa\in\K} u_\kappa\pi_\kappa\circ\Pi .
\end{split}
\end{align*} 
Then $\mathcal{R}_\Pi$ is a retraction with $\mathcal{R}_\Pi^c$ as a coretraction. 
By using this retraction-coretraction system, it is a simple matter to check that
\begin{align*}
e_\Pi\in \L(\eb,\mathbb{E}_2(J,\T_a)).
\end{align*}
Extending $\mathcal{V}:=(\zeta\circ d_\M)(\nu_\M\circ \Pi)$ to be identically zero outside $\T_a$, it belongs to $BC^\infty(\Omega,\R^{m+1})\cap C^\omega(\T_{a/3},\R^{m+1})$. 
Proposition~\ref{S3: Omega-Tue}(a) implies
$$[(\lambda,\mu,\eta)\mapsto \mathcal{V}_{\lambda,\mu,\eta}]\in C^\omega(\Br, C^1(J, BC^k(\Omega,\R^{m+1})))$$
for all $k\in\N_0$. Proposition~\ref{S3：M-reg} and \eqref{S5: ana of z*} then yield 
\begin{align*}
[(h,(\lambda,\mu))\mapsto \Upsilon_{\lambda,\mu,\eta}({h}+h^*_{\lambda,\mu})]\in C^\omega(\zeb\times\Br, \mathbb{E}_2(J,\Omega;\R^{m+1})).
\end{align*} 
Because $\mathbb{F}_j(J,\Omega)$ with $j=2,3$ and $W^{1-1/2p}_p(J;H^1_p(\Omega))\cap L_p(J; W^{3-1/p}_p(\Omega))$ are Banach algebras, we verify via Proposition~\ref{S3: Omega-Tue}(b) that
\begin{align}
\label{S5: ana of M_j}
[(h,(\lambda,\mu,\eta))\mapsto M_{j,\lambda,\mu,\eta}({h}+h^*_{\lambda,\mu})]\in C^\omega(\zeb\times\Br, \mathbb{F}_2(J,\Omega;\R^{(m+1)^2}))
\end{align}
for $j=1,2,3$. Proposition~\ref{S3: Omega-Tue}(b) and \eqref{S5: ana of z*} lead to
$$[(\vartheta,(\lambda,\mu,\eta)) \mapsto\Tue\nabla\Tuei (\vartheta+\vartheta_{\lambda,\mu,\eta})]\in C^\omega(\zea\times\Br, \mathbb{F}_3(J,\Omega_i;\R^{m+1})). $$
In virtue of Proposition~\ref{S3: Omega-Tue}(b) and \eqref{S3: analyticity of Blme}, we get
\begin{align*}
[(z,(\lambda,\mu,\eta))\mapsto \mathcal{R}_{\lambda,\mu,\eta}(h+h^*_{\lambda,\mu})(\vartheta+\vartheta^*_{\lambda,\mu,\eta})]\in C^\omega(\zej\Br, \mathbb{F}_3(J,\Omega_i)).
\end{align*}
Now it is immediate that 
$$[(z,(\lambda,\mu))\mapsto F_{\lambda,\mu,\eta}(z,z^*_{\lambda,\mu})]\in C^k(\zej\times\Br, \fa).$$
Since $\cb$ is a multiplication algebra, we have
\begin{align*}
[(h,(\lambda,\mu))\mapsto M_{0,\lambda,\mu}(h+h^*_{\lambda,\mu})]\in C^\omega(\zeb\times\Br, \mathbb{C}_2(J;T^1_1\M)).
\end{align*}
Proposition~\ref{S3：M-reg} and point-wise multiplier results on $\M$ then imply that
\begin{align*}
[(h,(\lambda,\mu))\mapsto \alpha_{\lambda,\mu}(h+h^*_{\lambda,\mu})]\in C^\omega(\zeb\times\Br, \mathbb{F}_2(J;T\M)).
\end{align*}
Combined with  the properties for substitution operators for Sobolev-Slobodeckii spaces, it yields
\begin{align*}
[(h,(\lambda,\mu))\mapsto \beta_{\lambda,\mu}(h+h^*_{\lambda,\mu})]\in C^\omega(\zeb\times\Br, \fb).
\end{align*} 
It is shown in  \cite{Shao13} that in every coordinate patch $(\Ok,\vpk)$, the local expression of the mean curvature operator reads as
\begin{align*}
({\cH}(h))_\kappa=\beta(h) \frac{P_\kappa(h,\partial_j h, \partial_{jk}h)}{R_\kappa(h,\partial_j h)}.
\end{align*}
Here $P_\kappa$ is a polynomial in $h$ and its derivatives up to second order, and $R_\kappa$ is a polynomial in $h$ and its first order derivatives. Both have $BC^\infty \cap C^\omega$-coefficients. We again use the fact that $\fb$ is a Banach algebra. Following a similar argument in the same reference, we get 
\begin{align*}
[(h,(\lambda,\mu))\mapsto {\cH}_{\lambda,\mu}(h+h^*_{\lambda,\mu})]\in C^\omega(\zeb\times\Br, \fb).
\end{align*}
By \eqref{S5: ana of M_j} and trace theorems of anisotropic Sobolev-Slobodeckii spaces, we infer that
\begin{align*}
[(h,(\lambda,\mu))\mapsto M_{1,\lambda,\mu}(h+h^*_{\lambda,\mu})|_\M]\in C^\omega(\zeb\times\Br, \mathbb{F}_3(J;\R^{(m+1)^2}).
\end{align*}
It follows from the point-wise multiplier theorem in \cite[Section~9]{Ama13} that
\begin{align*}
[(h,(\lambda,\mu))\mapsto M_{4,\lambda,\mu}(h+h^*_{\lambda,\mu})]\in C^\omega(\zeb\times\Br, \mathbb{F}_3(J;\R^{(m+1)^2}).
\end{align*}
To sum up, we conclude that
\begin{align*}
\begin{split}
[(z,(\lambda,\mu))\mapsto \bar{G}_{\lambda,\mu}(z,z^*_{\lambda,\mu})]&\in C^k(\zej\times\Br, \fb),\\
[(z,(\lambda,\mu))\mapsto \bar{Q}_{\lambda,\mu}(z,z^*_{\lambda,\mu})]&\in C^k(\zej\times\Br, \fc).
\end{split}
\end{align*}
Taking into account all the above discussion, this leads to
\begin{align*}
\Phi\in C^k(\zej\times\Br, \zfj).
\end{align*}
\smallskip\\
(ii) We look at the Fr\'echet derivative of $\Phi$ with respect to $\zej$ at $(\bar{z},0)$. To this end, we find it more convenient to consider the following non-linear maps.
\begin{align*}
\hat{F}(z)=&(\hat{\kappa}_0-\kappa(\vartheta))\partial_t \vartheta +(d(\vartheta)-\hat{d}_0)\Delta \vartheta -d(\vartheta)M_2(h):\nabla^2 \vartheta \\
&+d^\prime(\vartheta)|(I-M_1(h))\nabla \vartheta|^2 -d(\vartheta)(M_3(h)| \nabla \vartheta) +\kappa(\vartheta)\mathcal{R}(h)\vartheta,\\
 \hat{G}(z)=& -([\![\psi(\vartheta)]\!] + \sigma {\cH}(h) ) +\hat{l}_1 \vartheta +\sigma_0\Delta_\M h +(\gamma(\vartheta)\beta(h)-\hat{\gamma}_1)\partial_t h,\\
 \hat{Q}(z)=&[\![ (d(\vartheta)-\hat{d}_0)\partial_\nu \vartheta ]\!] +(\hat{l}_0 -l(\vartheta))\partial_t h -([\![d(\vartheta)\nabla \vartheta]\!]| M_4(h)\nabla_\M h)\\
 &+\gamma(v)\beta(h)(\partial_t h)^2,
\end{align*}
i.e., replacing  $\vartheta_A$ by $\hat{\vartheta}_0$ in the definition of $F$, $G$ and $H$. With only slight modification of (i), we immediately obtain
\begin{align*}
(\hat{F},\hat{G},\hat{Q})\in C^k(\ej,\fa\times\fb\times\fc).
\end{align*}
Recall that $\hat{z}=\bar{z}+z^*$. Letting $w=(u,\rho)$, one computes 
\begin{align*}
D_1\Phi(\bar{z},0)w=
\left\{\begin{aligned}
&\hat{\kappa}_0 \partial_t u -\hat{d}_0 \Delta u -\hat{F}^\prime(\hat{z})w && \text{in} && \Omega\setminus\M, \\
&\hat{l}_1 u +\sigma_0 \Delta_\M \rho -\hat{\gamma}_1\partial_t\rho - \hat{G}^\prime(\hat{z})w   && \text{on}&& \M,\\
&\hat{l}_0  \partial_t\rho -[\![ \hat{d}_0 \partial_\nu u]\!] -\hat{Q}^\prime(\hat{z})w  && \text{on}&& \M.
\end{aligned}\right.
\end{align*}
A moment of reflection shows that we have the liberty to exchange the coefficients $(\kappa_A,d_A,l_A)$, 
used in the definition of $\Phi$, by $(\hat{\kappa}_0,\hat{d}_0, \hat{l}_0)$.
These quantities are only used for the principal linearization, and are then subtracted off in the nonlinearities.

We split $D_1\Phi(\bar{z},0)$ into two parts. 
Let $\bz:=\beta(\hat{h}_0)$.
Define $\mathbb{L}(\hat{\vartheta}_0):\zej\rightarrow \zfj$ by
\begin{align*}
\mathbb{L}(\hat{\vartheta}_0)w=
\left\{\begin{aligned}
&\hat{\kappa}_0 \partial_t u -\hat{d}_0 (I-M_2(\hat{h}_0)): \nabla^2 u && \text{in}&& \Omega\setminus\M, \\
&\hat{l}_1 u +\sigma \cH^\prime(\hat{h}_0) \rho -\hat{\gamma}_1\beta^\prime(\hat{h}_0)e^{\Delta_\M t}(\partial_t \hat{h}(0))\rho\\ 
&\quad  -\hat{\gamma}_1 \bz\partial_t\rho-\gamma^\prime(e^{\Delta_\M t}\hat{\vartheta}_0)\bz e^{\Delta_\M t}(\partial_t \hat{h}(0))u    && \text{on} && \M,\\
&\hat{l}_0  \partial_t\rho -([\![ \hat{d}_0 \nabla u]\!]| \nu_\M- M_4(\hat{h}_0)\nabla_\M \hat{h}_0)    && \text{on}&& \M. 
\end{aligned}\right.
\end{align*}
and $\mathbb{K}(\hat{z}):\zej\rightarrow \zfj$ by 
$$\mathbb{K}(\hat{z})=:(\mathbb{K}_1,\mathbb{K}_2,\mathbb{K}_3)^{\sf T}:=\mathbb{L}(\hat{\vartheta}_0)w-D_1\Phi(\hat{z},0).$$
\begin{theorem}
Let $J=[0,T]$. Then $\mathbb{L}(\hat{\vartheta}_0)\in \Lis(\ej,\fj)$. In particular, $\mathbb{L}(\hat{\vartheta}_0)\in \Lis(\zej,\zfj)$. 
\end{theorem}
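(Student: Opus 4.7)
The strategy is to identify $\mathbb{L}(\hat\vartheta_0)$ as the full principal linearization of the transformed Stefan system \eqref{S2: transf Stefan} at $\hat z_0=(\hat\vartheta_0,\hat h_0)$, and to deduce the isomorphism property from the $L_p$-maximal regularity theory of \cite[Theorems~3.3, 3.5]{PruSimZac12} already used for the existence of $z^*$, combined with standard compact-perturbation and variable-coefficient arguments to absorb the geometric corrections generated by the non-flat reference height $\hat h_0$.

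First I would verify boundedness $\mathbb{L}(\hat\vartheta_0)\in\L(\ej,\fj)$. Since $\hat z_0\in\fd$, the reasoning leading to \eqref{S5: reg of k,d,gam,l-0} yields
$$\hat\kappa_0,\hat d_0\in W^{2-2/p}_p(\Omega\setminus\M),\qquad \hat l_0\in W^{2-3/p}_p(\M),\qquad \hat l_1,\hat\gamma_1\in\fb,$$
while the structural analysis of $\cH,\beta,M_j$ recalled before \eqref{S5: reg of H & beta} shows that $M_j(\hat h_0)|_\M$ and the coefficients of the linearizations $\cH^\prime(\hat h_0)$ and $\beta^\prime(\hat h_0)$ all lie in $W^{3-3/p}_p(\M)$ or $W^{2-3/p}_p(\M)$, which act as pointwise multipliers on $\fb$ and $\fc$. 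Because $p>m+3$, both $\fb$ and $\fc$ are multiplication algebras and $\ea\hookrightarrow\ca$ by \eqref{S5: embed of ea,eb}, so the multiplier calculus of \cite[Section~3.3.2]{Trie78} together with the trace theory of \cite{DenPruZac08} reduces each entry of $\mathbb{L}(\hat\vartheta_0)$ to a bounded map into the corresponding component of $\fj$; the compatibility of its image with $(\mathcal{LCC})$ at $t=0$ is checked by direct evaluation using the definition \eqref{S5: replace h_t} of $\partial_t\hat h(0)$ together with the assumption $(\mathcal{CC})$ on $\hat z_0$.

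For the isomorphism property, decompose $\mathbb{L}(\hat\vartheta_0)=\mathbb{L}_0+\mathbb{L}_1$, where $\mathbb{L}_0$ retains all top-order contributions,
$$\mathbb{L}_0 w=\bigl(\hat\kappa_0\partial_t u-\hat d_0(I-M_2(\hat h_0)){:}\nabla^2 u,\ \sigma\cH^\prime(\hat h_0)\rho-\hat\gamma_1\bz\partial_t\rho+\hat l_1 u,\ \hat l_0\partial_t\rho-[\![\hat d_0\partial_\nu u]\!]\bigr),$$
and $\mathbb{L}_1=\mathbb{L}(\hat\vartheta_0)-\mathbb{L}_0$ collects the strictly lower-order terms $\gamma^\prime(e^{\Delta_\M t}\hat\vartheta_0)\bz e^{\Delta_\M t}(\partial_t\hat h(0))u$, $\hat\gamma_1\beta^\prime(\hat h_0)e^{\Delta_\M t}(\partial_t\hat h(0))\rho$, and the flux correction $([\![\hat d_0\nabla u]\!]\,|\,M_4(\hat h_0)\nabla_\M\hat h_0)$. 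The operator $\mathbb{L}_0$ coincides, up to the change of frame induced by the Hanzawa parametrization $\Xi_{\hat h_0}$, with the reference linearization used in \cite[Theorems~3.3, 3.5]{PruSimZac12}: the diffusion matrix $\hat d_0(I-M_2(\hat h_0))=\hat d_0(I-M_1(\hat h_0))(I-M_1(\hat h_0))^{\sf T}$ is uniformly positive-definite with $BC^1$-regular coefficients (since $\Xi_{\hat h_0}$ is a diffeomorphism of $\bar\Omega$ and $\hat h_0$ lies inside the tubular neighborhood), and $\sigma\cH^\prime(\hat h_0)$ has principal symbol $-\sigma_0\Delta_\M$ with curvature corrections of order at most one. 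Hence the variable-coefficient $L_p$-maximal regularity underpinning \cite[Theorems~3.3, 3.5]{PruSimZac12} yields $\mathbb{L}_0\in\Lis(\ej,\fj)$.

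The remaining perturbation $\mathbb{L}_1:\ej\to\fj$ is of order at most one in $(u,\rho)$ with coefficients in $\fb$ or $BC^1(\M)$; the associated embeddings of the relevant anisotropic spaces are compact (cf.\ \cite[Section~2]{DenPruZac08}), so $\mathbb{L}_1$ is compact. Combined with injectivity of $\mathbb{L}(\hat\vartheta_0)$, which follows from the uniqueness statement of \cite[Theorems~3.1, 3.2]{PruSimZac12} applied to the homogeneous linear problem, the Fredholm alternative upgrades $\mathbb{L}_0+\mathbb{L}_1$ to an isomorphism. The main obstacle is precisely the second-order geometric term $\hat d_0 M_2(\hat h_0){:}\nabla^2 u$: its coefficient need not be small, so it cannot be handled as a Neumann perturbation of $-\hat d_0\Delta$; this is overcome by keeping it inside the principal part $\mathbb{L}_0$ and invoking variable-coefficient rather than frozen-coefficient maximal regularity. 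Finally, $\mathbb{L}(\hat\vartheta_0)\in\Lis(\zej,\zfj)$ is immediate, since the operator preserves vanishing initial traces at $t=0$.
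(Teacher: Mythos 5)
There is a genuine gap in your treatment of the Stefan-condition line. You relegate the tangential flux correction $([\![\hat d_0\nabla u]\!]\,|\,M_4(\hat h_0)\nabla_\M\hat h_0)$ to the ``strictly lower-order'' part $\mathbb{L}_1$ and propose to absorb it by compactness and the Fredholm alternative. But in the parabolic scaling this term has exactly the same order as $[\![\hat d_0\partial_\nu u]\!]$: both are first-order traces of $u$, and the trace map $\ea\ni u\mapsto \nabla u|_\M\in\fc$ lands in the \emph{sharp} trace space, so composing with a bounded multiplier gives a bounded but not compact operator $\ea\to\fc$. Consequently $\mathbb{L}_1$ is not compact and the Fredholm argument does not close. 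More importantly, this tangential term contributes to the principal symbol of the boundary operator, so it can in principle affect invertibility; it must be carried through the well-posedness analysis, not perturbed away. This is precisely why the paper's proof formulates a half-space model problem whose flux condition is $l_0\partial_t\rho + d_0\partial_\nu u + a_0\cdot\nabla_x u = q$ (with the tangential coefficient $a_0$ retained) and then verifies the Lopatinskii--Shapiro conditions $({\bf LS})$ and $({\bf LS}^+_\infty)$ of \cite{DenPruZac08} explicitly, checking that $[\,iS(\xi)a_0\cdot\xi+S(\xi)\mu d_0-l_2l_0\lambda\,]\delta=0$ forces $\delta=0$; general coefficients are then handled by perturbation and localization as in \cite{DenHiePru03} and \cite{PruSimZac12}.

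A related weakness is your assertion that $\mathbb{L}_0$ ``coincides, up to the change of frame induced by $\Xi_{\hat h_0}$, with the reference linearization used in \cite[Theorems~3.3, 3.5]{PruSimZac12}'' and hence is invertible by citation. The operator here has a non-diagonal bulk diffusion matrix $\hat d_0(I-M_2(\hat h_0))$ paired with a boundary flux that is \emph{not} the associated conormal derivative, so the combination is not literally one already treated there; whether the coupled system (including the zeroth-order coupling $\hat l_1 u$ in the Gibbs--Thomson line, which is also principal for this anisotropic system) satisfies maximal regularity is exactly what must be proved. The content of the paper's proof is this model-problem/Lopatinskii--Shapiro verification, and your proposal omits it. Your boundedness discussion and the final remark that vanishing initial traces are preserved are fine.
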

\begin{proof}
We first solve the following model problem for the case $\gamma\equiv 0$. Suppose that 
$$\kappa_0 \in BU\!C(\R^m \times \R_+),\quad d_0\in BU\!C^1(\R^m),\quad a_0\in BU\!C^1(\R^m;\R^m) ,\quad \kappa_0,d_0>0. $$ 
Moreover, $l_0\in W^{2-6/p}_p(\R^m)$, $l_2\in \mathbb{F}_2(J,\R^m)$ satisfy $l_2 l_0>0$. The differential operators $-P(x,y):\nabla^2$ and $-S(x):\nabla^2_x$ are uniformly elliptic. For any given $(f,g,q,(u_0,\rho_0))\in \mathbb{F}(J,\R^m \times \R_+)$,
\begin{align*}
\left\{\begin{aligned}
\kappa_0(x,y) \partial_t u(x,y) -  P(x,y): \nabla^2 u(x,y) &=f(x,y)
 &&\text{in}&& \R^m \times \R_+ ,\\
l_2(t,x) u(x) + S:\nabla^2_x \rho(x) &=g(x)
 &&\text{on}&&\R^m,\\
l_0(x) \partial_t \rho(x) + d_0(x)\partial_\nu u(x) +a_0(x)\cdot\nabla_x u(x) &=q(x)
 &&\text{on}&&\R^m,\\
u(0)=u_0,\quad \rho(0)&=\rho_0,&&
\end{aligned}\right.
\end{align*}
admits a unique solution $(u,\rho)\in \mathbb{E}(J,\R^m \times \R_+)$. We identify $\R^m$ with $\R^m\times\{0\}$. Here $x\in\R^m,y\in\R_+$,  and the space $\mathbb{F}(J,\R^m \times \R_+)$ is defined by replacing $\Omega\setminus\M$ and $\M$ by $\R^m \times \R_+$ and $\R^m$, respectively. The space $\mathbb{E}(J,\R^m \times \R_+)$ is defined analogously. Similar problems have been considered in \cite{DenPruZac08}. It suffices to check the Lopatinskii-Shapiro condition (${\bf LS}$) and the asymptotic Lopatinskii-Shapiro condition (${\bf LS}^+_\infty$) defined therein. For simplicity, we assume all the coefficients to be constants and set
$$P(\xi):=\xi_e^{\sf T}P\xi_e,\quad S(\xi):=\xi^{\sf T}S\xi, \quad P_{m+1}:=P_{m+1,m+1} $$
and $P_m:=(P_{m+1, 1},\cdots,P_{m+1,m})$ with $\xi\in \R^m$ and $\xi_e:=(\xi^{\sf T},0)^{\sf T}\in\R^{m+1}$.
(${\bf LS}$) is satisfied, if for any $\xi\in\R^m$ and $\lambda\in \bar{\C}_+$  with $|\xi|+|\lambda|\neq 0$, the following ordinary differential equation in $\R_+$
\begin{align*}
\left\{\begin{aligned}
(\kappa_0\lambda +P(\xi)+2iP_m\cdot\xi\partial_y -  P_{m+1}\partial^2_y) v(y) &=0,
 && y>0 ,\\
l_2 v(0) - S(\xi) \delta &=0, \\
l_0 \lambda \delta - (i a_0\cdot\xi +d_0(x)\partial_y ) v(0) &=0,\\
\end{aligned}\right.
\end{align*}
has a unique solution $(v,\delta)\in C_0(\R_+;\C)\times \C$.
It is clear that the only stable solution to the first line is 
$$\displaystyle v(y)=e^{\mu y}v(0), \quad  \mu:=\frac{\sqrt{-(P_m\cdot\xi)^2+P_{m+1}\kappa_0\lambda+P_{m+1}P(\xi)}+iP_m\cdot\xi}{ P_{m+1}}$$
in the case $\lambda\notin \R$, or, $v\equiv 0$, otherwise. We conclude from the second and third lines that 
$$[i S(\xi)a_0\cdot\xi  +S(\xi)\mu d_0  -l_2 l_0 \lambda]\delta=0.$$
It implies that $\delta=0$, $v\equiv 0$. (${\bf LS}^+_\infty$) is satisfied, if
for any $\xi\in\R^m$ and $\lambda\in \bar{\C}_+$  with $|\xi|+|\lambda|\neq 0$,
\begin{align*}
\left\{\begin{aligned}
(\kappa_0\lambda +P(\xi)+2iP_m\cdot\xi\partial_y -  P_{m+1}\partial^2_y) v(y) &=0,
 && y>0 ,\\
l_2 v(0) - S(\xi) \delta &=0, \\
(i a_0\cdot\xi +d_0(x)\partial_y ) v(0) &=0,\\
\end{aligned}\right.
\end{align*}
and $|\xi|=1$, $\lambda\in\bar{\C}_+\setminus\{0\}$
\begin{align*}
\left\{\begin{aligned}
(\kappa_0\lambda -  P_{m+1}\partial^2_y) v(y) &=0,
 && y>0 ,\\
l_2 v(0) - S(\xi) \delta &=0, \\
l_0 \lambda \delta +d_0(x)\partial_y  v(0) &=0,\\
\end{aligned}\right.
\end{align*}
and $|\xi|=1$, $\lambda\in\bar{\C}_+\setminus\{0\}$
\begin{align*}
\left\{\begin{aligned}
(\kappa_0\lambda -  P_{m+1}\partial^2_y) v(y) &=0,
 && y>0 ,\\
l_2 v(0)- S(\xi) \delta &=0, \\
d_0(x)\partial_y  v(0) &=0,\\
\end{aligned}\right.
\end{align*}
admit unique solutions $(v,\delta)\in C_0(\R_+;\C)\times \C$.
One can check in an analogous manner that stable solutions to those equations are trivial.   
For general coefficients, the problem can be proved  by a perturbation argument as in  \cite{DenHiePru03}.
The rest of the proof now follows from similar arguments to those of \cite[Theorems~3.3, 3.5]{PruSimZac12}. 
\end{proof}
In order to prove that $D_1\Phi(\bar{z},0)$ is an isomorphism, we need to control the norm $\| \mathbb{K}(\hat{z}_0)\|_{\L(\zej,\zfj)}$. To this end, we first compute several derivatives related to $\mathbb{K}$ explicitly.
\begin{align*}
\hat{F}^\prime(z)w=&(\hat{\kappa}_0- \kappa(\vartheta))\partial_t u -\kappa^\prime(\vartheta)u \partial_t \vartheta +(d(\vartheta)-\hat{d}_0)\Delta u +d^\prime(\vartheta)u\Delta \vartheta\\
& -d^\prime(\vartheta)u M_2(h):\nabla^2 \vartheta -d(\vartheta)M_2^\prime(h)\rho:\nabla^2 \vartheta
-d(\vartheta)M_2(h):\nabla^2 u\\
& -d^\prime(\vartheta)u (M_3(h)|\nabla \vartheta) -d(\vartheta)(M_3^\prime(h)\rho | \nabla \vartheta)-d(\vartheta)(M_3(h)| \nabla u)\\
& +2d^\prime(\vartheta)((I-M_1(h)\nabla \vartheta|(I-M_1(h))\nabla u -M_1^\prime(h)\rho\nabla \vartheta)\\
&  +d^{\prime\prime}(\vartheta)u |(I-M_1(h))\nabla \vartheta|^2 
+\kappa(\vartheta)(\nabla u| (I+\nabla\Upsilon(h)^{\sf T})^{-1}\partial_t\Upsilon(h)) \\
& -\kappa(\vartheta)(\nabla \vartheta| (I+\nabla\Upsilon(h)^{\sf T})^{-1}\nabla\Upsilon(\rho)^{\sf T}(I+\nabla\Upsilon(h)^{\sf T})^{-1}\partial_t\Upsilon(h))\\
&+\kappa(\vartheta)(\nabla \vartheta| (I+\nabla\Upsilon(h)^{\sf T})^{-1}\partial_t\Upsilon(\rho)) + \kappa^\prime(\vartheta)u \mathcal{R}(h)\vartheta ,
\end{align*}
and
\begin{align*}
\hat{G}^\prime(z)w=& -([\![\psi^\prime(\vartheta)]\!]u+\sigma {\cH}^\prime(h)\rho) +\hat{l}_1 u +\sigma_0\Delta_\M \rho +(\gamma(\vartheta)\beta(h)-\hat{\gamma}_1)\partial_t\rho \\
& +(\gamma^\prime(\vartheta)u\beta(h) -\gamma(\vartheta)\beta^\prime(h)\rho)\partial_t h,
\end{align*}
and
\begin{align*}
\hat{Q}^\prime(z)w=& [\![d^\prime(\vartheta)\partial_\nu \vartheta ]\!]u 
+[\![(d(\vartheta)-\hat{d}_0)\partial_\nu u ]\!] 
+(\hat{l}_0-l(\vartheta))\partial_t \rho 
-l^\prime(\vartheta)u \partial_t h\\
&-([\![d^\prime(\vartheta)\nabla \vartheta]\!]u| M_4(h)\nabla_\M h) 
-([\![d(\vartheta)\nabla u ]\!]| M_4(h)\nabla_\M h) \\
&-([\![d(\vartheta)\nabla \vartheta ]\!]| M_4(h)\nabla_\M \rho) 
-([\![d(\vartheta)\nabla \vartheta ]\!]| M_4^\prime(h)\rho \nabla_\M h) \\
& +\gamma^\prime(\vartheta)u \beta(h)[\partial_t h]^2 
+\gamma(\vartheta) \beta^\prime(h)\rho [\partial_t h]^2
+2\gamma(\vartheta)\beta(h) \partial_t h \partial_t \rho.
\end{align*}
The derivatives of $M_0(h)$, $\alpha(h)$, $\beta(h)$ and ${\cH}(h)$ are given by
\begin{align*}
& M_0^\prime(h)\rho=\rho M_0(h)L_\M M_0(h),\hspace{1em} \alpha^\prime(h)\rho=M_0(h)\nabla_\M \rho +\rho M_0(h)L_\M M_0(h)\alpha(h),\\
& \beta^\prime(h)\rho=-\beta^3(h)(\alpha(h)|M_0(h)\nabla_\M \rho +\rho M_0(h)L_\M\alpha(h)),\\
& {\cH}^\prime(h)\rho=\beta(h)\{ {\tr}[M_0^\prime(h)\rho(L_\M +\nabla_\M \alpha(h))] +{\tr}[M_0(h)\nabla_\M \alpha^\prime(h)\rho]\\
&\quad -2\beta(h)\beta^\prime(h)\rho(M_0(h)\alpha(h)| [\nabla_\M\alpha(h)]\alpha(h) ) -\beta^2(h)(M_0^\prime(h)\rho\alpha(h)|[\nabla_\M \alpha(h)]\alpha(h) )\\
&\quad -\beta^2(h)(M_0(h)\alpha^\prime(h)\rho| [\nabla_\M \alpha(h)]\alpha(h) )
-\beta^2(h)(M_0(h)\alpha(h)| [\nabla_\M \alpha^\prime(h)\rho]\alpha(h) )\\
&\quad -\beta^2(h)(M_0(h)\alpha(h)| [\nabla_\M\alpha(h)]\alpha^\prime(h)\rho) \}/m +\beta^\prime(h)\rho ({\cH}(h)/\beta(h)).
\end{align*}
See \cite[formula~(32)]{PruSim13} for a justification for the last equality.

We will use the following lemma frequently in the sequel.
\begin{lem}
\label{S5: F2-lem}
There exists a constant $C_0$ independent of $T$ such that
\begin{itemize}
\item[(a)] For all $(v_1,v_2)\in \mathbb{F}_j(J)\times \prescript{}{0}{\mathbb{F}_j(J)}$ and  $j=2,3$,
\begin{align*}
\|v_1 v_2\|_{\mathbb{F}_j(J)}\leq C_0(\|v_1\|_{C(J\times\M)}+\|v_1\|_{\mathbb{F}_j(J)})\|v_2\|_{\mathbb{F}_j(J)}.
\end{align*}
\vspace{-6mm}
\item[(b)] For all $(v_1,v_2)\in \prescript{}{0}{\mathbb{F}_j(J)}\times \prescript{}{0}{\mathbb{F}_j(J)}$ and  $j=2,3$,
\begin{align*}
\|v_1 v_2\|_{\prescript{}{0}{\mathbb{F}_j(J)}}\leq C_0  \|v_1\|_{\prescript{}{0}{\mathbb{F}_j(J)}}\|v_2\|_{\prescript{}{0}{\mathbb{F}_j(J)}}.
\end{align*}
\vspace{-6mm}
\item[(c)] For all $(v_1,v_2)\in \fc\times \zfb$
\begin{align*}
\|v_1 v_2\|_{\fc}\leq C_0  \|v_1\|_{\fc} \|v_2\|_{\zfb}.
\end{align*}
\end{itemize}
\end{lem}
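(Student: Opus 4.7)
The plan is to reduce each estimate to a product estimate over $\R_+\times\M$, where $T$-independence of constants is automatic. The crucial enabling fact is that, since $p>m+3$ forces both $(1-1/(2p))p$ and $(1/2-1/(2p))p$ to exceed $1$, the temporal trace at $t=0$ is well-defined on $\fb$ and $\fc$; hence every element of $\prescript{}{0}\mathbb{F}_j(J)$ ($j=2,3$) extends by zero to an element of the corresponding space over $\R_+$ with norm equivalent to its norm on $J$ up to a constant depending only on $p$ and $\M$. Composing with the Sobolev embedding into $BC(\R_+\times\M)$ then yields the $T$-independent inequality
\[
\|v\|_{C(J\times\M)}\le C\,\|v\|_{\prescript{}{0}\mathbb{F}_j(J)},\qquad v\in\prescript{}{0}\mathbb{F}_j(J),
\]
which is the only point at which the vanishing time trace is actually used.

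For part (a), I would work with the intrinsic norm
\[
\|v\|_{\mathbb{F}_j(J)} \simeq \|v\|_{L_p(J;W^{s_1}_p(\M))} + [v]_{W^{s_2}_p(J;L_p(\M))} + \|v\|_{L_p(J\times\M)},
\]
with $(s_1,s_2)=(2-1/p,\,1-1/(2p))$ for $j=2$ and $(1-1/p,\,1/2-1/(2p))$ for $j=3$. For the first summand I would use the refined Leibniz estimate $\|fg\|_{W^{s_1}_p(\M)} \le C(\|f\|_\infty\|g\|_{W^{s_1}_p(\M)} + \|g\|_\infty\|f\|_{W^{s_1}_p(\M)})$, which holds because $s_1 p>m$, pointwise in $t$, and then integrate. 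For the Gagliardo seminorm, the algebraic identity
\[
v_1(t)v_2(t)-v_1(\tau)v_2(\tau)=v_2(t)(v_1(t)-v_1(\tau))+v_1(\tau)(v_2(t)-v_2(\tau))
\]
combined with Minkowski in the $(t,\tau)$-integral yields the splitting
\[
[v_1 v_2]_{W^{s_2}_p(J;L_p)} \le \|v_2\|_{C(J\times\M)}[v_1]_{W^{s_2}_p(J;L_p)} + \|v_1\|_{C(J\times\M)}[v_2]_{W^{s_2}_p(J;L_p)}.
\]
Adding the trivial $L_p$-bound and combining produces the symmetric inequality
\[
\|v_1 v_2\|_{\mathbb{F}_j(J)} \le C\bigl(\|v_1\|_{C(J\times\M)}\|v_2\|_{\mathbb{F}_j(J)} + \|v_2\|_{C(J\times\M)}\|v_1\|_{\mathbb{F}_j(J)}\bigr),
\]
and the $T$-independent $C$-bound for $v_2\in\prescript{}{0}\mathbb{F}_j(J)$ delivers (a). Part (b) then follows by applying that same $C$-bound also to $v_1$.

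For part (c) the template is identical, but since $W^{1-1/p}_p(\M)$ is not a Banach algebra I would replace the refined Leibniz estimate by
\[
\|fg\|_{W^{1-1/p}_p(\M)}\le C\,\|f\|_{W^{1-1/p}_p(\M)}\|g\|_{BC^1(\M)},
\]
and absorb the right-hand side using the embedding $W^{2-1/p}_p(\M)\hookrightarrow BC^1(\M)$ (valid since $p>m+3$) applied to $v_2\in\zfb$. The main technical point to verify carefully is the $T$-uniformity of the extension-by-zero map $\prescript{}{0}\mathbb{F}_j(J)\hookrightarrow\mathbb{F}_j(\R_+)$: this is classical for scalar $W^s_p$ with $sp>1$ and vanishing trace, but must be transferred to the anisotropic intersection spaces used here, most directly by extending the $W^{s_2}_p(J;L_p)$- and $L_p(J;W^{s_1}_p)$-parts separately by zero and invoking the scalar result on each. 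Once this is in place, every remaining step is a routine application of refined Leibniz-type estimates and triangle-inequality manipulations of the Gagliardo seminorm, all of which are manifestly $T$-independent.
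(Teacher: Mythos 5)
Your parts (a) and (b) follow essentially the paper's route: split the temporal and spatial Gagliardo seminorms of the product via the identity $v_1v_2(t)-v_1v_2(\tau)=v_2(t)(v_1(t)-v_1(\tau))+v_1(\tau)(v_2(t)-v_2(\tau))$, then absorb the resulting norms of $v_2$ using $T$-uniform embeddings of the zero-trace space. One caveat for $j=2$: since $s_1=2-1/p>1$, the symmetric ``refined Leibniz estimate'' $\|fg\|_{W^{s_1}_p(\M)}\le C(\|f\|_\infty\|g\|_{W^{s_1}_p}+\|g\|_\infty\|f\|_{W^{s_1}_p})$ with only sup-norms is not correct as stated: in the contribution where the spatial derivative falls on $v_1$, the increment of $v_2$ must be paid for with $\|\nabla v_2\|_\infty$ (or a H\"older norm of exponent $>1-1/p$), not with $\|v_2\|_\infty$. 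This does not hurt the lemma, because $\prescript{}{0}{\mathbb{F}_2(J)}\hookrightarrow C(J;BC^1)$ with a $T$-uniform constant --- which is precisely what the paper invokes --- but the intermediate inequality you rely on should be stated with the $BC^1$-norm of the zero-trace factor.

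Part (c) has a genuine gap. Applying your ``identical template'' to the seminorm of $v_1v_2$ in $W^{1/2-1/2p}_p(J;L_p(\M))$ produces the term $\|v_1\|_{C(J\times\M)}[v_2]_{W^{1/2-1/2p}_p(J;L_p)}$, and there is no $T$-uniform bound $\|v_1\|_{C(J\times\M)}\le C\|v_1\|_{\fc}$ for $v_1$ without vanishing time trace (take $v_1\equiv 1$: the left-hand side is $1$ while $\|v_1\|_{\fc}\sim T^{1/p}$); extension by zero is unavailable for $v_1$ for the same reason. The paper resolves this differently: it estimates the cross term $\int_J\int_J\|v_1(s)(v_2(t)-v_2(s))\|_{L_p}^p\,|t-s|^{-1/2-p/2}\,dt\,ds$ by exploiting the $T$-uniform embedding $\zfb\hookrightarrow C^s(J;BC(\M))$ for some $s>1/2-1/2p$, so that $\|v_2(t)-v_2(s)\|_\infty\lesssim |t-s|^s$ renders the kernel integrable and only $\|v_1\|_{L_p(J;L_p(\M))}$ is needed. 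Some use of the surplus temporal regularity of $v_2\in\zfb$ (it lies in $W^{1-1/2p}_p$ in time, not merely $W^{1/2-1/2p}_p$) is indispensable here; without it the asserted $T$-independence of $C_0$ in (c) does not follow from your argument.
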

\begin{proof}
(a) The case $j=2$ is shown in \cite[Lemma~5.5(b)]{PruSim09}. 
For the reader's convenience, we will nevertheless include a proof herein. The case $j=3$ follows in a similar way. 
By the retraction-coretraction system defined in Section~2, it suffices to show the estimates for functions on $\Q$, i.e., we assume $(v_1,v_2)\in \mathbb{F}_j(J,\Q)\times \prescript{}{0}{\mathbb{F}_j(J,\Q)}$.
We equip $\mathbb{F}_2(J,\Q)$ with the norm:
\begin{align*}
\begin{split}
&\|v\|_{\mathbb{F}_2(J,\Q)}= \|v\|_{W^{1-1/2p}_p(J;L_p(\Q))} +\|v\|_{L_p(J;W^{2-1/p}_p(\Q))},\\
&\|v\|_{L_p(J;W^{2-1/p}_p(\Q))}=\|v\|_{L_p(J;H^1_p(\Q))} +\sum\limits_{j=1}^m \| \langle \partial_j v \rangle_{W^{1-1/p}_p(\Q)}\|_{L_p(J)}. 
\end{split}
\end{align*}
Here $\langle \cdot \rangle_{W^{1-1/p}_p(J;X)}$ is the Slobodeckii seminorm of the space $W^{1-1/p}_p(J;X)$ for a Banach space $X$.
Then,
\begin{align*}
&\|v_1 v_2\|_{W^{1-1/2p}_p(J;L_p(\Q))}\leq C^\prime \{ \|v_1\|_{W^{1-1/2p}_p(J;L_p(\Q))}\|v_2\|_{BC(J\times\Q)}\\
&\quad + (\int_J\int_J \|v_1(s)(v_2(t)- v_2(s))\|^p_{L_p(\Q)}\frac{1}{|t-s|^{1/2+p}}\, dt\, ds  )^{1/p} \} \\
&\leq C^\prime\{ \|v_1\|_{W^{1-1/2p}_p(J;L_p(\Q))}\|v_2\|_{BC(J\times\Q)}+\|v_1\|_{BC(J\times\Q)}\langle v_2 \rangle_{W^{1-1/2p}_p(J;L_p(\Q))} \}.
\end{align*}
Similarly, one computes
\begin{align*}
&\quad \|v_1 v_2\|_{L_p(J;W^{2-1/p}_p(\Q))}\leq C^\prime\{ \|v_1\|_{L_p(J;W^{2-1/p}_p(\Q))}\|v_2\|_{C(J,BC^1(\Q))}\\
&+\sum\limits_{j=1}^m[\int_J (\int_{\Q}\!\int_{\Q} |v_1(t,x)(\partial_j v_2(t,x)-\partial_j v_2(t,y) |^p \frac{1}{|x-y|^{m-1+p}}\,dx\, dy)\, dt ]^{1/p}\\
&+\sum\limits_{j=1}^m[\int_J (\int_{\Q}\!\int_{\Q} |\partial_j v_1(t,x)(v_2(t,x)-v_2(t,y) |^p \frac{1}{|x-y|^{m-1+p}}\,dx\, dy)\, dt ]^{1/p}
\}.
\end{align*}
We immediately have
\begin{align*}
&\quad\int_J (\int_{\Q}\!\int_{\Q} |v_1(t,x)(\partial_j v_2(t,x)-\partial_j v_2(t,y) |^p \frac{1}{|x-y|^{m-1+p}}\,dx\, dy)\, dt\\
&\leq \|v_1\|_{BC(J\times\Q)}^p\| v_2\|_{L_p(J,W^{2-1/p}_p(\Q))}^p.
\end{align*}
The remaining estimate can be carried out as follows.
\begin{align}
&\notag \quad \int_{\Q}\!\int_{\Q} |\partial_j v_1(t,x)(v_2(t,x)-v_2(t,y) |^p \frac{1}{|x-y|^{m-1+p}}\,dx\, dy\\
&\notag \leq \int_{\Q}\!\int_{\Q} |\partial_j v_1(t,x)|^p (\int\limits_0^1 |(\nabla v_2(t,x+\tau(y-x))|(y-x))|\, d\tau)^p\frac{1}{|x-y|^{m-1+p}}\,dx\, dy\\
&\notag\leq C^\prime\int_{\Q}|\partial_j v_1(t,x)|^p \int_{\Q} \frac{|x-y|^p}{|x-y|^{m-1+p}}\,dy \, dx \|\nabla v_2(t)\|_{BC(\Q)}^p\\
\label{S5: est 2}
&\leq C^\prime\|v_1(t)\|_{H^1_p(\Q)}^p\|  v_2(t)\|_{BC^1(\Q)}^p.
\end{align}
Combining these discussions yields
\begin{align*}
\|v_1 v_2\|_{L_p(J;W^{2-1/p}_p(\Q))}&\leq
C^\prime(\|v_1\|_{BC(J\times\Q)}+\|v_1\|_{L_p(J;W^{2-1/p}_p(\Q))})\\
&\quad(\|v_2\|_{C(J;BC^1(\Q))}+\|v_2\|_{L_p(J;W^{2-1/p}_p(\Q)})).
\end{align*}
Using the fact that the embedding constant of $\prescript{}{0}{\mathbb{F}_2(J,\Q)}\hookrightarrow C(J;BC^1(\Q))$ is independent of $T$ yields the asserted result.  
\smallskip\\
(b) is an immediate consequence of (a) and the fact that the embedding constant of 
$\prescript{}{0}{\mathbb{F}_2(J,\Q)}\hookrightarrow C(J;BC(\Q))$ is independent of $T$. 
\smallskip\\
(c) Suppose that $(v_1,v_2)\in \mathbb{F}_3(J,\Q)\times \prescript{}{0}{\mathbb{F}_2(J,\Q)}$. 
Then
\begin{align*}
&\|v_1 v_2\|_{W^{1/2-1/2p}_p(J;L_p(\Q))}\leq \|v_1\|_{W^{1/2-1/2p}_p(J;L_p(\Q))}\|v_2\|_{BC(J\times\Q)}\\
&\quad + (\int_J\int_J \|v_1(s)(v_2(t)- v_2(s))\|^p_{L_p(\Q)}\frac{1}{|t-s|^{1/2+p/2}}\, dt\, ds  )^{1/p}.
\end{align*}
By \cite[Lemma~2.5]{MeySch12}, the comments below \cite[formula~(3.5)]{MeySch12}, an analogue of \cite[diagram~(6.12)]{PruSim07} with $BU\!C$ replaced by $C^s$, we obtain the embedding result 
$$\prescript{}{0}{\mathbb{F}_2(J,\Q)}\hookrightarrow C^s(J;BC(\Q))$$ 
for some $s>1/2-1/2p$ with an embedding constant $M=M(s)$ uniform in $T$. 
Thus, we infer that
\begin{align*}
&\quad \int_J\int_J \|v_1(s)(v_2(t)- v_2(s))\|^p_{L_p(\Q)}\frac{1}{|t-s|^{1/2+p/2}}\, dt\, ds\\
&\leq C\int_J \|v_1(s)\|^p_{L_p(\Q)} \int_J \frac{1}{|t-s|^{1/2+p/2-sp}}\, dt\, ds \|v_2\|^p_{C^s(J;BC(\Q))}\\
&\leq C^\prime \|v_1(s)\|^p_{L_p(J;L_p(\Q))} \|v_2\|_{C^s(J;BC(\Q))}^p.
\end{align*}
By an analogous estimate as in (a), see in particular \eqref{S5: est 2}, one obtains
$$ \|v_1 v_2\|_{L_p(J;W^{1-1/p}_p(\Q))}\leq C^\prime \|v_1\|_{L_p(J;W^{1-1/p}_p(\Q))} \|v_2\|_{C(J;BC^1(\Q))}.$$
\end{proof}
Note that the multiplication constant $C_0$ in (b) blows up as $T\to 0$ if $\prescript{}{0}{\mathbb{F}_j(J)}$ is replaced by $\mathbb{F}_j(J)$.

We write
\begin{align*}
\mathbb{K}_2(\hat{z})w =& \hat{l}_1 u- [\![\psi^\prime(\hat{\vartheta})]\!]u +(\gamma(\hat{\vartheta})-\hat{\gamma}_1)\beta(\hat{h})\partial_t\rho +\hat{\gamma}_1(\beta(\hat{h})-\bz)\partial_t\rho\\
&  -\sigma( {\cH}^\prime(\hat{h})- {\cH}^\prime(\hat{h}_0))\rho  +(\gamma(\hat{\vartheta})\beta^\prime(\hat{h})\partial_t \hat{h} - \hat{\gamma}_1\beta^\prime(\hat{h}_0)e^{\Delta_\M t}(\partial_t \hat{h}(0)))\rho\\
& +(\gamma^\prime(\hat{\vartheta})\beta(\hat{h})\partial_t \hat{h}-\gamma^\prime(e^{\Delta_\M t}\hat{\vartheta}_0)\bz e^{\Delta_\M t}(\partial_t \hat{h}(0)))u.
\end{align*} 
Let $\varepsilon$ sufficiently small be fixed. Recall that
$$\hat{l}_1 u-[\![\psi^\prime(\hat{\vartheta})]\!]u=([\![\psi^\prime(e^{\Delta_\M t}\hat{\vartheta}_0)-\psi^\prime(\hat{\vartheta})]\!])u.$$
Due to fact that $[\![\psi^\prime(e^{\Delta_\M t}\hat{\vartheta}_0)-\psi^\prime(\hat{\vartheta})]\!]\in \zfb$ 
and Lemma~\ref{S5: F2-lem}(b), by making $T$ small enough, we can achieve that
$$
\|\hat{l}_1 u-[\![\psi^\prime(\hat{\vartheta})]\!]u\|_{\fb}\leq \varepsilon \|u\|_{\fb}. 
$$
The arguments for the remaining terms in $\mathbb{K}_2(\hat{z})w$ are similar. Thus given any $\varepsilon>0$, for $T$ small enough, we have
$$\|\mathbb{K}_2(\hat{z}) w\|_{\fb}\leq \varepsilon\|w\|_{\zej}. $$

One can obtain an analogous assertion for  
$\|\mathbb{K}_3(\hat{z})\|_{\L(\zej,\zfc)}$. Indeed, 
$$\| [\![ (d(\vartheta)-\hat{d}_0) \partial_\nu u ]\!]\|_{\fc} \leq C_0 \|(d(\vartheta)-\hat{d}_0)\|_{\zfc} \|[\![\partial_\nu u]\!]\|_{\zfc}. $$
Similar estimates also apply to 
$$\| (\hat{l}_0- l(\hat{\vartheta})) \partial_t \rho\|_{\fc}, \|( [\![ d(\hat{h})\nabla u ]\!]| M_4(\hat{h})\nabla \hat{h} ) -( [\![ \hat{d}_0 \nabla u ]\!]| M_4(\hat{h}_0)\nabla \hat{h}_0 )\|_{\fc}. $$
It follows from Lemma~\ref{S5: F2-lem}(c) that
$$ \|[\![ d^\prime(\hat{\vartheta}) \partial_\nu \hat{\vartheta} ]\!] u\|_{\fc} \leq C_0 \|[\![ d^\prime(\hat{\vartheta}) \partial_\nu \hat{\vartheta} ]\!]\|_{\fc} \|u\|_{\zfb}.$$
The remaining terms in $\mathbb{K}_3(\hat{z})w$ can be estimates in an analogous way.


For $\|\mathbb{K}_1(\hat{z}) \|_{\L(\zej,\fa)}$, one verifies by direct computation that
\begin{align*}
\|\mathbb{K}_1(\hat{z})w \|_{\fa}=&\|\hat{F}^\prime(\hat{z})w+ \hat{d}_0 M_2(\hat{h}_0):\nabla^2 u\|_{\fa}\\
&\leq C_2\|w\|_{\ej}+C_3\|w\|_{\mathbb{C}_1(J)\times\cb}.
\end{align*}
Here the constants $C_2$ and $C_3$ tend to zero as $T\to 0$.
\begin{prop}
Let $p>m+3$, $\sigma>0$. Suppose that $d_i\in C^2(0,\infty)$, $\gamma,\psi_i\in C^3(0,\infty)$. Then there exists some constant $\tau_0$ such that given any  $T\leq \tau_0$, on $J=[0,T]$, we have
\begin{align*}
\|\mathbb{L}^{-1}(\hat{\vartheta}_0)\|_{\L(\zfj,\zej)}\|\mathbb{K}^\prime(\hat{z}_0)\|_{\L(\zej,\zfj)}\leq 1/2.
\end{align*}
\end{prop}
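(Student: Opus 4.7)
My plan is to combine the two ingredients already in place: the isomorphism property of the principal part $\mathbb{L}(\hat{\vartheta}_0)$ from the preceding theorem, and the smallness of the perturbation $\mathbb{K}(\hat{z})$ as $T \to 0$, which has just been established component by component.

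First I would observe that, since $\mathbb{L}(\hat{\vartheta}_0)\in\Lis(\zej,\zfj)$, the norm $\|\mathbb{L}^{-1}(\hat{\vartheta}_0)\|_{\L(\zfj,\zej)}$ is a fixed finite number that is \emph{uniform in} $T$ for $T\in(0,T_0]$. Uniformity is the point where the choice of the zero time-trace spaces $\zej,\zfj$ pays off: the $L_p$-maximal regularity constants for parabolic problems with vanishing initial data do not blow up as the time interval shrinks, in contrast to the situation in $\ej,\fj$. Denote this uniform constant by $C_{\mathbb{L}}$.

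Next I would package the three component estimates for $\mathbb{K}_1,\mathbb{K}_2,\mathbb{K}_3$ that were derived just above. For $\mathbb{K}_2$ and $\mathbb{K}_3$, the arguments proceed by writing each coefficient as a sum of a term with zero temporal trace at $t=0$ (such as $\hat{l}_1-[\![\psi^{\prime}(\hat{\vartheta})]\!]$, $\beta(\hat{h})-\hat{\beta}_0$, $\hat{d}_0-d(\hat{\vartheta})$, $\hat{l}_0-l(\hat{\vartheta})$, $M_4(\hat{h})\nabla\hat{h}-M_4(\hat{h}_0)\nabla\hat{h}_0$, etc.) times the linear unknown $u,\rho,\partial_t\rho,\partial_\nu u,\dots$, and then invoking Lemma~\ref{S5: F2-lem}(b)--(c), whose multiplication constant $C_0$ is uniform in $T$ precisely because the \emph{second} factor sits in a zero-trace space. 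For $\mathbb{K}_1$, the estimate reduces to standard point-wise multipliers on $\Omega\setminus\Sigma$, complemented by the embeddings $\zea\hookrightarrow\mathbb{C}_1(J)$ and $\zeb\hookrightarrow\mathbb{C}_2(J)$ with embedding constants uniform in $T$ (as remarked below \eqref{S5: embed of ea,eb}). In each case the prefactor is a small $\zfb$-, $\zfc$- or $\prescript{}{0}{}{}$-type norm of a function vanishing at $t=0$, hence it tends to $0$ as $T\to 0$. Consequently there exist a constant $C_{\mathbb{K}}$ independent of $T$ and a function $\omega(T)\to 0^+$ as $T\to 0^+$ such that
\begin{equation*}
\|\mathbb{K}(\hat{z})\|_{\L(\zej,\zfj)}\leq \omega(T),\qquad T\in(0,T_0].
\end{equation*}

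Finally I would pick $\tau_0\in(0,T_0]$ so small that $\omega(\tau_0)\leq 1/(2C_{\mathbb{L}})$; this yields the desired inequality
\begin{equation*}
\|\mathbb{L}^{-1}(\hat{\vartheta}_0)\|_{\L(\zfj,\zej)}\,\|\mathbb{K}(\hat{z}_0)\|_{\L(\zej,\zfj)}\leq \tfrac12
\end{equation*}
for every $T\leq\tau_0$. The main obstacle, and the only delicate point, is the uniform-in-$T$ control of $\|\mathbb{L}^{-1}(\hat{\vartheta}_0)\|$ and of the multiplication constants in the nonlinear coefficients: both hinge on working in the \emph{vanishing-trace} subspaces $\zej$ and $\zfj$, which eliminate the blow-up that would otherwise come from low-order time-trace corrections. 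Once this uniformity is in hand, the result is a direct consequence of the smallness of each $\mathbb{K}_j$ and the triangle inequality.
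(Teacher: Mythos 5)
Your proposal is correct and follows essentially the same route as the paper: the proposition is exactly the packaging of the component estimates for $\mathbb{K}_1,\mathbb{K}_2,\mathbb{K}_3$ derived immediately before it (via Lemma~\ref{S5: F2-lem} and the vanishing time traces of the coefficient differences), combined with the $T$-uniform bound on $\|\mathbb{L}^{-1}(\hat{\vartheta}_0)\|_{\L(\zfj,\zej)}$ coming from maximal regularity on the zero-trace spaces. You correctly isolate the one delicate point — that both the inverse norm and the multiplication constants must be uniform in $T$, which is precisely why the argument is run in $\zej$ and $\zfj$ — and this matches the paper's remark below \eqref{S5: embed of ea,eb} and the note following Lemma~\ref{S5: F2-lem}.
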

It follows from a Neuman series argument that 
$$D_1\Phi(\bar{z},0)\in \Lis(\zej,\zfj).$$
Employing now the implicit function theorem, we attain 
$$[(\lambda,\mu,\eta)\mapsto \bar{z}_{\lambda,\mu,\eta}]\in C^k (\Br,\zej) .$$
It follows then from Theorem~\ref{S3: main thm} that
$$\bar{z}  \in C^k(\mathring{J}\times \T_{a/3}\setminus\M)\times C^k(\mathring{J}\times\M).$$
\begin{remark}
\label{S5: Proof of main RMK}
By a similar argument to the proof given at the end of Section~4.2, we can show that
$$\bar{z} \in C^k(\mathring{J}\times \Omega\setminus\M) \times C^k (\mathring{J}\times\M). $$
Together with \eqref{S5: ana of z*}, we thus have
$$\hat{z} \in C^k(\mathring{J}\times \Omega\setminus\M) \times C^k (\mathring{J}\times\M). $$
For $k\in\N\cup\{\infty\}$, since the Hanzawa transformation is $C^\infty$-smooth, the above assertion  implies that the solution $(\theta, \Gamma)$ to \eqref{stefan} has the same regularity. 
\smallskip\\
But when $k=\omega$, analyticity of the temperature $\theta$, in general, cannot be attained by applying the Hanzawa transformation.
\qed
\end{remark}

\end{document}